\newcommand\abs[1]{\ensuremath{\lvert #1\rvert}}
\newtheorem{theorem}{Theorem}[section]
\newtheorem{lemma}[theorem]{Lemma}
\newtheorem{proposition}[theorem]{Proposition}
\newtheorem{conjecture}[theorem]{Conjecture}
\newtheorem{claim}{Claim}[theorem]
\newtheorem*{claim*}{Claim}
\newenvironment{subproof}[1][Proof]{\begin{proof}[#1]}{\end{proof}}
\theoremstyle{definition}
\newcommand{\gen}[1]{\ensuremath{\langle #1\rangle}}
\newcommand{\branch}{\ensuremath{V_{\neq 2}}}
\def\lowfwd #1#2#3{{\mathop{\kern0pt #1}\limits^{\kern#2pt\raise.#3ex
\vbox to 0pt{\hbox{$\scriptscriptstyle\rightarrow$}\vss}}}}
\def\lowbkwd #1#2#3{{\mathop{\kern0pt #1}\limits^{\kern#2pt\raise.#3ex
\vbox to 0pt{\hbox{$\scriptscriptstyle\leftarrow$}\vss}}}}
\newcommand{\G}{\Gamma}
\newcommand{\g}{\gamma}
\newcommand{\mbn}{\mathbb{N}}
\newcommand{\mcf}{\mathcal{F}}
\newcommand{\mch}{\mathcal{H}}
\newcommand{\mcp}{\mathcal{P}}
\newcommand{\mcq}{\mathcal{Q}}
\newcommand{\mcr}{\mathcal{R}}
\newcommand{\mcs}{\mathcal{S}}
\newcommand{\mct}{\mathcal{T}}
\newcommand{\mbz}{\mathbb{Z}}
\newcommand{\ep}{Erd\H{o}s-P\'osa }
\newcommand{\hi}{half-integral }
\begin{document}

\title{Erd\H{o}s-P\'osa property of $A$-paths in unoriented group-labelled graphs}

\author{O-joung Kwon}
\address[Kwon]{\small Department of Mathematics, Hanyang~University, Seoul,~South~Korea and Discrete Mathematics Group, Institute~for~Basic~Science~(IBS), Daejeon,~South~Korea.}
\email{{ojoungkwon@hanyang.ac.kr}}

\author{Youngho Yoo}
\address[Yoo]{\small Department of Mathematics and Statistics, University of Alaska Fairbanks, AK, USA}
\email{{yyoo2@alaska.edu}}
\thanks{O. Kwon is supported by the National Research Foundation of Korea (NRF) grant funded by the Ministry of Education (No. NRF-2021K2A9A2A11101617 and No. RS-2023-00211670) and supported by the Institute for Basic Science (IBS-R029-C1).}
\begin{abstract}
    We characterize the obstructions to the Erd\H{o}s-P\'osa property of $A$-paths in unoriented group-labelled graphs. As a result, we prove that for every finite abelian group $\Gamma$ and for every subset $\Lambda$ of $\Gamma$, the family of $\Gamma$-labelled $A$-paths whose lengths are in $\Lambda$ satisfies the half-integral Erd\H{o}s-P\'osa property. Moreover, we give a characterization of such $\Gamma$ and $\Lambda\subseteq\Gamma$ for which the same family of $A$-paths satisfies the full Erd\H{o}s-P\'osa property.
\end{abstract}
\maketitle

\section{Introduction}
A family $\mcf$ of graphs is said to satisfy the \emph{\ep property} if there exists a function $f$, called an \emph{\ep function for $\mcf$}, such that for every positive integer $k$, every graph $G$ contains either $k$ (vertex-)disjoint subgraphs in $\mcf$ or a set of at most $f(k)$ vertices intersecting every subgraph of $G$ in $\mcf$. This property is named after Erd\H{o}s and P\'osa who proved  \cite{ErdosP1965} in 1965 that the family of cycles satisfies this property, and has since been studied for numerous other families of graphs. 

For instance, Lov\'asz and Schrijver observed (see \cite{Thomassen1988}) that the family of odd cycles does \emph{not} satisfy the \ep property due to certain projective planar grids; on the other hand, even cycles \cite{DejterN1988} and, more generally, cycles of length 0 modulo $m$ for any fixed positive integer $m$ \cite{Thomassen1988} do satisfy the \ep property.
The question of when the \ep property holds for cycles of length $\ell$ modulo $m$, for fixed integers $\ell$ and $m>0$, was posed by Dejter and Neumann-Lara \cite{DejterN1988} in 1988 and recently resolved by Gollin, Hendrey, Kwon, Oum, and Yoo \cite{gollin2022unified} with a characterization of the \ep property of ``allowable'' cycles in a more general setting of unoriented group-labelled graphs.

Robertson and Seymour \cite{robertson1986graph} generalized the \ep theorem in different direction and showed in 1986 that, for a fixed graph $H$, the family of graphs that contain $H$ as a minor satisfies the \ep property if and only if $H$ is planar.
Liu, Postle, and Wollan (see \cite{liu2022packing}) proved an analogous characterization of the \ep property of topological minors.

Another widely studied class of graph families includes various families of $A$-paths: for a vertex set $A$, an \emph{$A$-path} is a path with at least one edge such that it has both endpoints in $A$ and is internally disjoint from $A$. 
Gallai \cite{gallai1961maximum} showed in 1961 that $A$-paths satisfy the \ep property, and this result was generalized in several ways \cite{bruhn2018frames, bruhn2022packing, Chudnovsky2006, geelen2009odd, geelen2021disjoint, Mader1978, pap2007packing, thomas2023packingApaths, wollan2010packing, yamaguchi2016packing}.
For example, Thomas and Yoo \cite{thomas2023packingApaths} proved an analogue of Dejter and Neumann-Lara's question for $A$-paths by characterizing when the \ep property holds for $A$-paths of length $\ell$ modulo $m$.

In many cases, when a graph family $\mcf$ fails to satisfy the \ep property, one can salvage a weaker \emph{half-integral \ep property} where, rather than insisting on a \emph{packing} (a set of $k$ disjoint subgraphs of $G$ in $\mcf$), we settle for a \emph{half-integral packing} (a multiset of members of $\mcf$ in $G$ such that no vertex belongs to more than two elements of the multiset).

For instance, while odd cycles do not satisfy the \ep property, Reed \cite{Reed1999} proved a structure theorem showing that the only obstructions to the \ep property of odd cycles are certain projective planar grids called \emph{Escher walls}, and derived as a consequence that odd cycles satisfy the half-integral \ep property.
Far-reaching generalizations of Reed's structure theorem and corresponding half-integral \ep results for cycles in group-labelled graphs were proved in \cite{gollin2024unified, gollin2022unified}.
Similarly, while there are many graphs $H$ for which the family of graphs containing $H$ as a (topological) minor does not satisfy the \ep property, Liu \cite{liu2022packing} proved that the \hi \ep property holds for all graphs $H$ for an even stronger notion of \emph{rooted} topological minors.

In contrast, there were no half-integral \ep results for families of $A$-paths that fail the \ep property, until very recently; Chekan et al.~\cite{chekan2024half} showed that, for vertex sets $S$ and $T$, the \hi \ep property holds for the family of paths starting in $S$ and ending in $T$ of odd length (in fact, they prove this for paths of \emph{nonzero} length in \emph{oriented} group-labelled graphs, labelled by elements from a finite group).

In this paper, we characterize the topological obstructions to the \ep property of $A$-paths in \emph{unoriented} group-labelled graphs, for finite abelian groups.
Our structure theorem gives a general \hi \ep result for $A$-paths, as well as a characterization of the full \ep property.

Let $\G$ be a finite abelian group with additive operation and identity element 0. An (\emph{unoriented}) \emph{$\G$-labelled graph}\footnote{Henceforth, all group-labelled graphs are assumed to be unoriented.} is a pair $(G,\g)$ of a graph $G$ and a \emph{$\G$-labelling} $\g:E(G)\to\G$.
The \emph{$\g$-length} of a subgraph $H$ of $G$ is defined to be $\sum_{e\in E(H)}\g(e)$ and is denoted $\g(H)$. 

Let $\Lambda$ be a subset of $\G$. We say that a subgraph $H$ of $G$ is \emph{$\Lambda$-allowable} if $\g(H)\in\Lambda$.
We say that the set $\Lambda$ satisfies the \emph{Erd\H{o}s-P\'osa condition (in $\G$)}  if both of the following conditions are satisfied:
    \begin{enumerate}
        [label=(EP\arabic*)]
        \item \label{item:ep1}
        For all $a,b,c\in\G$, if $a+b+c\in\Lambda$, then either $a+b\in\Lambda$, $(2a+\langle c\rangle)\cap\Lambda\neq\emptyset$, or $(2b+\langle c\rangle)\cap\Lambda\neq\emptyset$.
        \item \label{item:ep2} 
        For all $a,b,c\in\G$, if $2a+b+c\in \Lambda$, then either $(2a+\langle b\rangle)\cap\Lambda\neq\emptyset$ or $(2a+\langle c\rangle)\cap\Lambda \neq\emptyset$.
    \end{enumerate}
It is easy to see from Figure \ref{fig:obstructions} that if \ref{item:ep1} or \ref{item:ep2} is violated, then there is a $\Gamma$-labelled graph that does not contain two disjoint $\Lambda$-allowable $A$-paths but requires many vertices to intersect every $\Lambda$-allowable $A$-path. Hence, the \ep condition is necessary for the \ep property of $\Lambda$-allowable $A$-paths (see also Proposition \ref{prop:noEPC}). 
On the other hand, note that both graphs in Figure \ref{fig:obstructions} contain a large half-integral packing of $\Lambda$-allowable $A$-paths.

Our main structure theorem (Theorem \ref{thm:mainApathstructure}) shows that the only obstructions to the \ep property of $\Lambda$-allowable $A$-paths are essentially those in Figure \ref{fig:obstructions}. This implies that the \ep condition is also sufficient for the \ep property and that the \hi \ep property always holds for $\Lambda$-allowable $A$-paths in finite abelian groups.
These are the main results of our paper.
\begin{theorem}
    \label{thm:mainep}
    Let $\G$ be a finite abelian group and let $\Lambda\subseteq \G$. Let $\mcf$ denote the family of $\Lambda$-allowable $A$-paths. Then
    \begin{enumerate}
        \item $\mcf$ satisfies the \hi \ep property, and \label{thm:mainep1}
        \item $\mcf$ satisfies the \ep property if and only if $\Lambda$ satisfies the \ep condition. \label{thm:mainep2}
    \end{enumerate}
\end{theorem}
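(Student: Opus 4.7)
The plan is to derive both parts of Theorem~\ref{thm:mainep} from a single structure theorem that characterizes when $G$ admits a small hitting set for $\mcf$. Specifically, the aim is to show: for every integer $k\geq 1$, there is $f(k)$ such that if $G$ has no hitting set of size $f(k)$ for $\mcf$, then either $G$ contains $k$ pairwise disjoint $\Lambda$-allowable $A$-paths, or $G$ contains a large ``$\G$-labelled wall anchored on $A$'' that produces $2k$ half-integrally disjoint $\Lambda$-allowable $A$-paths. Moreover, this wall yields $k$ pairwise disjoint such $A$-paths \emph{unless} its labels realize one of two local configurations witnessing a failure of \ref{item:ep1} or \ref{item:ep2}.

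For the structure theorem itself, I would follow the tangle/tree-decomposition template developed for group-labelled cycles in \cite{gollin2024unified, gollin2022unified} and for modular $A$-paths in \cite{thomas2023packingApaths}. Iteratively one either reduces via a small $A$-respecting separator or extracts a tangle of large order; the tangle then yields a large wall $W$ together with many vertex-disjoint paths from $A$ to $W$. After the standard label-shifting normalization, the set $\G_W\subseteq\G$ of realizable $\g$-lengths of $A$-paths routed through $W$ becomes a union of cosets of the subgroup generated by $\g$-lengths of the cycles of $W$, translated by residues coming from the $A$-attachments. The abelian hypothesis on $\G$ is essential here: it ensures that $\G_W$ has coset structure and that the obstruction to integral packing reduces to a finite combinatorial condition on at most three group elements.

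Given the structure theorem, part (\ref{thm:mainep1}) follows because a sufficiently large wall can always accommodate $2k$ $A$-paths half-integrally once $\Lambda\cap\G_W\neq\emptyset$, while the complementary case (no such wall) already produces a small hitting set. For part (\ref{thm:mainep2}), the ``only if'' direction is Proposition~\ref{prop:noEPC}. For the ``if'' direction, one analyses the residual obstruction to lifting the half-integral packing to an integral one: it arises precisely when two or three of the $A$-paths are forced to share a vertex of the wall, yielding an identity of the form $a+b+c\in\Lambda$ or $2a+b+c\in\Lambda$ on the combined $\g$-lengths of the shared portions. The EP condition then supplies a rerouting (realizing the alternative $A$-path $\g$-lengths $a+b$, $2a+\gen{c}$, or $2a+\gen{b}$) that avoids the shared vertex and converts the half-integral packing into an integral one.

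The hardest step will be the structure theorem, and specifically the verification that the two patterns in \ref{item:ep1} and \ref{item:ep2} exhaust the obstructions. The ``doubling'' in \ref{item:ep2} is a genuinely unoriented phenomenon: a single edge can contribute $2a$ to the $\g$-length of a closed walk that enters and exits through it, which does not arise in the oriented setting of \cite{chekan2024half}. Isolating exactly these two local patterns — and no more — is where the abelian coset structure of $\G_W$ must be exploited most carefully, and this is the core of the argument.
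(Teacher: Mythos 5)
Your proposal follows the same overall strategy as the paper --- extract a large wall via tangles, normalize labels by shifting, and reduce to a group-theoretic condition on at most three elements --- so at the level of broad strokes this is the same route, not a different one. The substantive gap is in the step that is supposed to convert a half-integral packing into an integral one under the EP condition. You describe this as a local ``rerouting'' that avoids shared vertices, realizing the alternative $\g$-lengths $a+b$, or an element of $2a+\gen{c}$ or $2a+\gen{b}$. The paper does not proceed that way, and it is not clear such a rerouting argument can be made to work on its own.

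Instead, the paper's third structural outcome is a very specific object: a $\Lambda$-irreducible $(A,k)$-ribboned wall, consisting of a strongly balanced wall $W$ together with pairwise non-mixing $W$-handlebars $\mcp_1,\dots,\mcp_m$ and $A$-$W$-handlebars $\mcq_1,\mcq_2$ carrying prescribed $\g$-lengths. The obstruction to an integral packing is then a purely combinatorial condition on the handlebar \emph{types} (series, nested, crossing), conditions \ref{item:Aobstructions-handlebars}--\ref{item:Aobstructions-samesideApath}, and the existence of a packing in the non-obstruction case is imported wholesale from the cycle analogue (Lemmas 5.3 and 5.4 of \cite{gollin2022unified}) after joining $\mcq_1$ and $\mcq_2$ into a $W$-handlebar. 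The EP condition enters only once, in Proposition~\ref{prop:EPnoobstruction}, to rule out the handlebar-type obstruction, and plays no role in any routing argument. Without first isolating the handlebar structure and showing that the EP condition forbids the series/parity pattern, it is not apparent how the identities $a+b\in\Lambda$, $(2a+\gen{c})\cap\Lambda\neq\emptyset$, or $(2a+\gen{b})\cap\Lambda\neq\emptyset$ translate into a rerouting that keeps all $k$ paths vertex-disjoint: realizing a nontrivial element of $\gen{c}$ requires traversing cycles or extra handles of the wall, and arranging $k$ such detours disjointly is exactly what the handlebar machinery (Lemmas~\ref{lem:handlebarsnonmixing}, \ref{lem:combining-handles}, \ref{lem:addlinkage}) is there to do and what your sketch leaves unaddressed.
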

Theorem \ref{thm:mainep}\eqref{thm:mainep1} generalizes the recent odd path result of Chekan et al. \cite{chekan2024half} (see Section \ref{subsubsec:weakABpaths})\footnote{We remark that our result is not a strict strengthening of the full result of Chekan et al. \cite{chekan2024half} which is for nonzero paths in \emph{oriented} group-labelled graphs. While the oriented and the unoriented settings coincide for $\mathbb{Z}/2\mathbb{Z}$ (and more generally for groups whose nonzero elements all have order 2), the two settings are incomparable in general.} and partially answers a question of Gollin et al. \cite[Question 1]{gollin2024unified} who asked whether the \hi \ep property holds for the family of $\Lambda$-allowable $A$-paths when $\Gamma$ is the product of $m$ abelian groups $\Gamma_1\times\dots\times\Gamma_m$, where each $\Gamma_i$ may be infinite, such that $\Lambda = (\G_1-\Omega_1)\times \dots\times (\G_m-\Omega_m)$ for some $\Omega_i\subseteq\Gamma_i$ with $|\Omega_i| \leq \omega$, for fixed $m$ and $\omega$.
We suspect that our techniques for finite groups can be extended to this more general setting, but our proof relies on  some tools from \cite{thomas2023packingApaths} that depend on the finiteness of $\G$, and the bound we obtain on the \ep function grows with $|\G|$.
Our proof is constructive and our bound is computable, in contrast to the non-constructive proof of \cite{chekan2024half}.
Moreover, for finite groups, we obtain a stronger result than what was asked in \cite[Question 1]{gollin2024unified} in the sense that we characterize the full \ep property by the \ep condition in Theorem \ref{thm:mainep}\eqref{thm:mainep2}; this also generalizes the characterization of Thomas and Yoo \cite{thomas2023packingApaths} (see Section \ref{subsubsec:mod}). We conjecture that this characterization also holds in the more general setting with infinite groups; an analogous result for cycles was proved in \cite{gollin2022unified}.
\begin{conjecture} \label{conj}
    Let $m$ and $\omega$ be positive integers and, for each $i\in[m]$, let $\G_i$ be an abelian group and let $\Omega_i$ be a subset of $\G_i$ with $|\Omega_i|\leq \omega$. Let $\Lambda = (\G_1-\Omega_1)\times \dots\times (\G_m-\Omega_m)$, and let $\mcf$ denote the family of $\Lambda$-allowable $A$-paths. Then
    \begin{enumerate}
        \item $\mcf$ satisfies the \hi \ep property, and \label{conj1}
            \item $\mcf$ satisfies the \ep property if and only if $\Lambda$ satisfies the \ep condition. \label{conj2}
    \end{enumerate}
\end{conjecture}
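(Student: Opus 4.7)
The necessity direction in part \eqref{conj2} should follow the same scheme as for Theorem~\ref{thm:mainep}\eqref{thm:mainep2}: whenever $\Lambda$ fails one of \ref{item:ep1} or \ref{item:ep2}, one constructs explicit obstruction gadgets (cf.\ Figure \ref{fig:obstructions}) witnessing arbitrarily many $\Lambda$-allowable $A$-paths with no bounded cover. These gadgets use only finitely many group elements, so they transfer verbatim from the finite setting to the setting of Conjecture~\ref{conj}; this part I expect to be essentially immediate given the proof of the necessity in Theorem~\ref{thm:mainep}.

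For sufficiency in \eqref{conj2} and for the half-integral statement \eqref{conj1}, the natural strategy is reduction to Theorem~\ref{thm:mainep}. Given an input $\G$-labelled graph $(G,\gamma)$, the labels $\gamma(E(G))$ generate a finitely generated subgroup $\G'\leq\G$, so $\G'\cong\mathbb{Z}^r\oplus T$ with $T$ finite. The torsion part causes no trouble, but the free part must be collapsed to a finite quotient without destroying the information about membership in $\Omega_i$. Since each $\Omega_i$ is finite, one may choose $N$ (depending on $\omega$, $m$, and $|E(G)|$) so that the projection to $\G'/(N\mathbb{Z}^r\oplus 0)$ separates all translates of $\Omega_i\cap\G'_i$ by $\g$-lengths realizable by subgraphs of $G$. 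Call the resulting finite group $\overline{\G}$ and the pull-back of $\Lambda$ to it $\overline{\Lambda}$; the plan is then to verify that $\overline{\Lambda}$ still satisfies the \ep condition in $\overline{\G}$, apply Theorem~\ref{thm:mainep} to $(G,\overline{\g})$, and lift the output: a cover for $\overline{\Lambda}$-allowable $A$-paths is automatically a cover for $\Lambda$-allowable $A$-paths, so only the packing side needs lifting.

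The main obstacle is precisely this lifting of packings: a half-integral packing produced by Theorem~\ref{thm:mainep} for $\overline{\Lambda}$ may contain paths whose true $\g$-length in $\G$ falls into $\G-\Lambda$ even though the projection lies in $\overline{\Lambda}$. Removing such spurious paths can, in principle, destroy the packing. Controlling this requires either (i) a quantitative refinement of Theorem~\ref{thm:mainep} producing packings whose $\g$-lengths can be constrained within a prescribed finite set, which that theorem does not a priori provide, or (ii) choosing the modulus $N$ large enough that the very existence of an $\overline{\Lambda}$-allowable $A$-path of spurious length forces, via a local rerouting, a genuine $\Lambda$-allowable $A$-path disjoint from the rest of the packing.

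A cleaner and probably necessary route is to revisit the proof of Theorem~\ref{thm:mainep} in the present paper and reformulate each of its ingredients (the topological obstruction structure theorem and the translation between packings, covers, and labels) so that they depend only on the restriction of $\Lambda$ to a finitely generated subgroup of $\G$ rather than on $|\G|$ itself. The analogous programme was carried out for cycles in \cite{gollin2022unified}, and that proof is the most plausible template; in particular, the bounds in our \ep functions would be replaced by bounds depending on $m$, $\omega$, and the torsion of the finitely generated subgroup induced by $G$, rather than on $|\G|$.
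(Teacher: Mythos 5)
This statement is a \emph{conjecture} in the paper, not a theorem; the paper contains no proof of it. Indeed, the authors explicitly caution in the paragraph preceding the conjecture that the proof of Theorem~\ref{thm:mainep} ``relies on some tools from \cite{thomas2023packingApaths} that depend on the finiteness of $\G$, and the bound we obtain on the \ep function grows with $|\G|$.'' So there is no proof in the paper to compare your attempt against, and your proposal is, by your own admission, a discussion of strategy and obstacles rather than a proof.

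Within that discussion, your assessment of the necessity direction in \eqref{conj2} is accurate (the gadgets of Figure~\ref{fig:obstructions} use only finitely many labels and go through unchanged). Your identification of the packing-lifting issue is also a genuine obstacle. But there is a more basic problem with the reduction strategy that you gloss over: you propose choosing the modulus $N$ ``depending on $\omega$, $m$, and $|E(G)|$,'' and then applying Theorem~\ref{thm:mainep} to the quotient $\overline{\G} = \G'/(N\mathbb{Z}^r\oplus 0)$. The cover size that Theorem~\ref{thm:mainep} produces is $f(k,|\overline{\G}|)$, and $|\overline{\G}|$ grows with $N$, hence with $|E(G)|$. An Erd\H{o}s-P\'osa function must be a function of $k$ alone for fixed $\G$, $\Lambda$; a bound that grows with the size of the input graph is vacuous. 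So even setting aside the lifting of packings, the reduction as you describe it cannot yield a valid \ep function. To make a reduction-type argument work, $N$ (and hence $\overline{\G}$) would have to be chosen uniformly, depending only on $k$, $m$, $\omega$ and the finitely generated subgroup of labels relevant to some bounded-size substructure — which is close to what a from-scratch reworking of the proof would have to establish anyway. Your final paragraph, proposing to rework the argument following the cycle template of \cite{gollin2022unified} so that all bounds depend on $m$, $\omega$, and torsion rather than on $|\G|$, is consistent with the authors' own suggestion, but this remains open.
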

Conjecture \ref{conj} is a strengthening of the question of Gollin et al. \cite[Question 1]{gollin2024unified}, which is equivalent to Conjecture \ref{conj}\eqref{conj1}.
The necessity of the \ep condition in Conjecture \ref{conj}\eqref{conj2} is again easy to see from Figure \ref{fig:obstructions}.

Note that neither \ref{item:ep1} nor \ref{item:ep2} can be removed from the definition of the \ep condition.

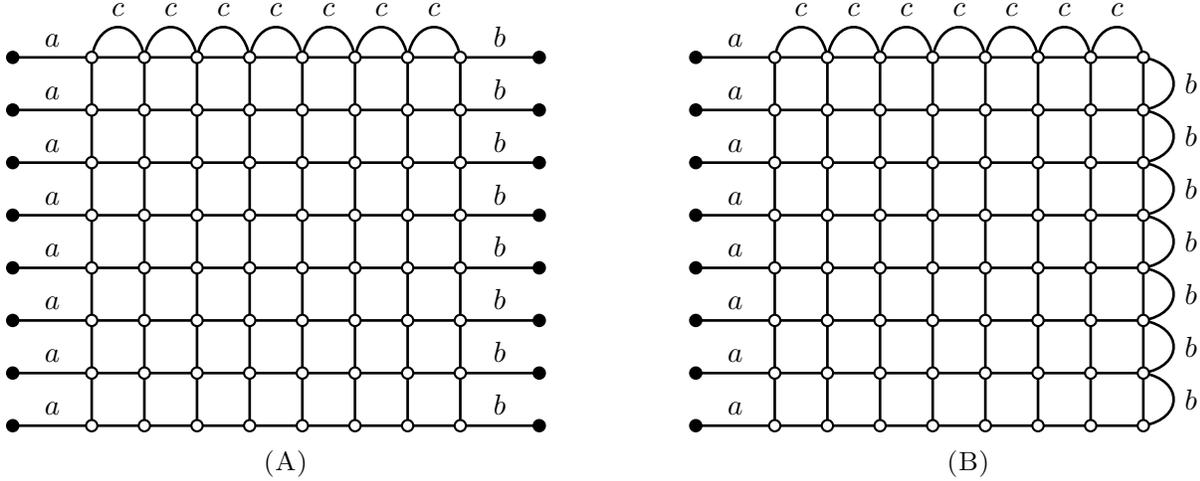
\begin{figure}
    \centering
    \begin{subfigure}{0.45\textwidth}
        \begin{tikzpicture}

\colorlet{hellgrau}{black!30!white}

\tikzstyle{smallvx}=[thick,circle,inner sep=0.cm, minimum size=1.5mm, fill=white, draw=black]
\tikzstyle{smallblackvx}=[thick,circle,inner sep=0.cm, minimum size=1.5mm, fill=black, draw=black]
\tikzstyle{squarevx}=[thick,rectangle,inner sep=0.cm, minimum size=2mm, fill=white, draw=black]
\tikzstyle{hedge}=[line width=1pt]
\tikzstyle{markline}=[draw=hellgrau,line width=6pt]

\def\gridheight{7}
\def\brickheight{0.7}

\pgfmathtruncatemacro{\lastrow}{\gridheight}
\pgfmathtruncatemacro{\penultimaterow}{\gridheight-1}
\pgfmathtruncatemacro{\lastrowshift}{mod(\gridheight,2)}
\pgfmathtruncatemacro{\lastx}{2*\gridheight+1}

\foreach \i in {0,...,\gridheight}{
  \draw[hedge] (0,\i*\brickheight) -- (\gridheight*\brickheight, \i*\brickheight);
  \draw[hedge] (\i*\brickheight,0) -- (\i*\brickheight, \gridheight*\brickheight);
}
\foreach \i in {0,...,\gridheight}{
  \draw[hedge] (-1.5*\brickheight,\i*\brickheight) edge node[above] {$a$} (0,\i*\brickheight);
  \draw[hedge] (\gridheight*\brickheight,\i*\brickheight) edge node[above] {$b$} (\gridheight*\brickheight+1.5*\brickheight,\i*\brickheight);
}
\foreach \i in {1,...,\gridheight}{
  \draw[hedge] (\i*\brickheight-\brickheight,\gridheight*\brickheight) edge[bend left=80, looseness=2] node[above] {$c$} (\i*\brickheight,\gridheight*\brickheight);
}

\foreach \i in {0,...,\gridheight}{
  \foreach \j in {0,...,\gridheight}{
    \node[smallvx] () at (\i*\brickheight,\j*\brickheight) {};
  }
}
\foreach \i in {0,...,\gridheight}{
  \node[smallblackvx] () at (-1.5*\brickheight,\i*\brickheight) {};
  \node[smallblackvx] () at (\gridheight*\brickheight+1.5*\brickheight, \i*\brickheight) {};
}
\end{tikzpicture}
        \caption{} \label{fig:obs1}
    \end{subfigure}
      \hspace*{\fill}
    \begin{subfigure}{0.45\textwidth}
        \begin{tikzpicture}

\colorlet{hellgrau}{black!30!white}

\tikzstyle{smallvx}=[thick,circle,inner sep=0.cm, minimum size=1.5mm, fill=white, draw=black]
\tikzstyle{smallblackvx}=[thick,circle,inner sep=0.cm, minimum size=1.5mm, fill=black, draw=black]
\tikzstyle{squarevx}=[thick,rectangle,inner sep=0.cm, minimum size=2mm, fill=white, draw=black]
\tikzstyle{hedge}=[line width=1pt]
\tikzstyle{markline}=[draw=hellgrau,line width=6pt]

\def\gridheight{7}
\def\brickheight{0.7}

\pgfmathtruncatemacro{\lastrow}{\gridheight}
\pgfmathtruncatemacro{\penultimaterow}{\gridheight-1}
\pgfmathtruncatemacro{\lastrowshift}{mod(\gridheight,2)}
\pgfmathtruncatemacro{\lastx}{2*\gridheight+1}

\foreach \i in {0,...,\gridheight}{
  \draw[hedge] (0,\i*\brickheight) -- (\gridheight*\brickheight, \i*\brickheight);
  \draw[hedge] (\i*\brickheight,0) -- (\i*\brickheight, \gridheight*\brickheight);
}
\foreach \i in {0,...,\gridheight}{
  \draw[hedge] (-1.5*\brickheight,\i*\brickheight) edge node[above] {$a$} (0,\i*\brickheight);
}
\foreach \i in {1,...,\gridheight}{
  \draw[hedge] (\i*\brickheight-\brickheight,\gridheight*\brickheight) edge[bend left=80, looseness=2] node[above] {$c$} (\i*\brickheight,\gridheight*\brickheight);
  \draw[hedge] (\gridheight*\brickheight, \i*\brickheight-\brickheight) edge[bend right=80, looseness=2] node[right] {$b$} (\gridheight*\brickheight,\i*\brickheight);
}

\foreach \i in {0,...,\gridheight}{
  \foreach \j in {0,...,\gridheight}{
    \node[smallvx] () at (\i*\brickheight,\j*\brickheight) {};
  }
}
\foreach \i in {0,...,\gridheight}{
  \node[smallblackvx] () at (-1.5*\brickheight,\i*\brickheight) {};
}
\end{tikzpicture}
        \caption{}\label{fig:obs2}
    \end{subfigure}
    \caption{The black vertices constitute $A$ and all edges are labelled 0 unless otherwise indicated. If $\Lambda$ does not satisfy \ref{item:ep1}, then every $\Lambda$-allowable $A$-path in (\subref{fig:obs1}) has one endpoint on the left and one on the right and contains at least one of the top edges labelled $c$. If $\Lambda$ does not satisfy \ref{item:ep2}, then every $\Lambda$-allowable $A$-path in (\subref{fig:obs2}) has both endpoints on the left and contains at least one edge labelled $c$ and at least one edge labelled $b$. In either case, no two $\Lambda$-allowable $A$-paths are disjoint.}
    \label{fig:obstructions}
\end{figure}

\begin{proposition}
    There exists a finite abelian group $\Gamma$ and $\Lambda\subseteq\G$ satisfying \ref{item:ep1} but not \ref{item:ep2}, and there exists a finite abelian group $\G'$ and $\Lambda'\subseteq \G'$ satisfying \ref{item:ep2} but not \ref{item:ep1}.
\end{proposition}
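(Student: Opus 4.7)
The plan is to exhibit both examples explicitly and verify each property directly; the only substantive step is the verification of (EP1) in the second example.

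For a group satisfying (EP2) but not (EP1), I would take $\Gamma' = \mathbb{Z}/4\mathbb{Z}$ with $\Lambda' = \{1\}$. The triple $(a, b, c) = (0, 3, 2)$ witnesses the failure of (EP1): $a + b + c \equiv 1 \in \Lambda'$ while $a + b = 3 \notin \Lambda'$, and both $2a + \langle c \rangle$ and $2b + \langle c \rangle$ equal $\{0, 2\}$, which misses $\{1\}$. Conversely, (EP2) holds: if $2a + b + c \equiv 1 \pmod 4$, then $b + c$ is odd because $2a$ is even, so at least one of $b, c$ is odd in $\mathbb{Z}/4\mathbb{Z}$ and hence generates the whole group; then $2a + \langle b \rangle$ or $2a + \langle c \rangle$ equals $\mathbb{Z}/4\mathbb{Z}$, which trivially meets $\Lambda'$.

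For a group satisfying (EP1) but not (EP2), I would take $\Gamma = \mathbb{Z}/3\mathbb{Z} \oplus \mathbb{Z}/3\mathbb{Z}$ and $\Lambda = (\mathbb{Z}/3\mathbb{Z} \setminus \{0\})^2$, the set of elements with both coordinates nonzero. The failure of (EP2) is witnessed by $a = (0,0)$, $b = (1,0)$, $c = (0,1)$: then $2a + b + c = (1,1) \in \Lambda$, whereas $2a + \langle b \rangle = \langle (1,0) \rangle$ and $2a + \langle c \rangle = \langle (0,1) \rangle$ are the two coordinate axes, both disjoint from $\Lambda$ by construction.

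The main step is the verification of (EP1), which I would handle by case analysis on the cyclic subgroup $\langle c \rangle$. If $c = (0,0)$, then $a + b = a + b + c \in \Lambda$ directly. If $\langle c \rangle$ is one of the two three-element ``diagonal'' subgroups $\langle (1,1) \rangle$ or $\langle (1,2) \rangle$, a short counting argument shows that every coset of $\langle c \rangle$ meets $\Lambda$: each coset has three elements, while the five-element complement $\Gamma \setminus \Lambda$, being contained in the union of the two coordinate axes, contributes at most two of them to any such diagonal coset. Hence $(2a + \langle c \rangle) \cap \Lambda \neq \emptyset$. Otherwise $\langle c \rangle$ is a coordinate axis; by symmetry, say $\langle c \rangle = \langle (1,0) \rangle$, whose cosets are horizontal rows. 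The only row disjoint from $\Lambda$ is row zero, which equals $2a + \langle c \rangle$ precisely when $a_2 = 0$; in that subcase the hypothesis $a + b + c \in \Lambda$ forces $b_2 \neq 0$, and a short case analysis on $a_1 + b_1$ shows that either $a + b \in \Lambda$ (when $a_1 + b_1 \neq 0$), or else $(2b + \langle c \rangle)$ equals the nonzero row $2b_2$, which meets $\Lambda$. The main obstacle is bookkeeping the cosets of the various cyclic subgroups in this last step; since $|\Gamma| = 9$, the verification could alternatively be done by finite enumeration.
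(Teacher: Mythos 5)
Your proof is correct; it takes the same basic strategy as the paper (exhibit explicit examples and verify both conditions by a short case analysis), but with different and arguably cleaner witnesses. For \ref{item:ep2}-but-not-\ref{item:ep1}, the paper uses $\mathbb{Z}/6\mathbb{Z}$ with $\Lambda'=\{4\}$, which requires a somewhat fiddlier verification of \ref{item:ep2} (elimination of $b,c\in\{1,5\}$, then a parity argument splitting into $b=c=3$ versus $b,c\in\{2,4\}$); your $\mathbb{Z}/4\mathbb{Z}$ with $\Lambda'=\{1\}$ collapses the verification to a single observation that at least one of $b,c$ must be a generator. For \ref{item:ep1}-but-not-\ref{item:ep2}, the paper's $\mathbb{Z}/15\mathbb{Z}$ with units of $\mathbb{Z}/15\mathbb{Z}$ is, via CRT, really the ``product of two cyclic groups with $\Lambda$ the set of elements nonzero in both coordinates'' construction applied to $\mathbb{Z}/3\mathbb{Z}\times\mathbb{Z}/5\mathbb{Z}$; your $\mathbb{Z}/3\mathbb{Z}\times\mathbb{Z}/3\mathbb{Z}$ is the same idea with the smallest possible factors, and your case split on $\langle c\rangle$ (trivial, diagonal, or axis) is transparent. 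One small inefficiency: in the axis case with $a_2=0$, once you deduce $b_2\neq 0$ the conclusion $2b+\langle c\rangle=\text{row }2b_2\neq\text{row }0$ follows immediately because $2$ is invertible in $\mathbb{Z}/3\mathbb{Z}$, so the additional case distinction on $a_1+b_1$ is not needed. The argument is nonetheless sound.
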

\begin{proof}
    Let $\G=\mbz/15\mbz$ and let $\Lambda = \{1,2,4,7,8,11,13,14\}$;\footnote{For simplicity, we denote the elements of $\mbz/m\mbz$ by $0,1,2,\dots,m-1$ and treat them as integers as convenient.} that is, $\Lambda$ is the set of integers in $\G$ relatively prime to 15. Then \ref{item:ep2} is not satisfied; indeed, if $a=0$, $b=3$, and $c=5$, then $2a+b+c = 8\in\Lambda$, but $2a+\langle b\rangle = \langle 3\rangle$ and $2a+\langle c\rangle = \langle 5\rangle$, neither of which intersects $\Lambda$. 
    Now let us show that \ref{item:ep1} is satisfied. Let $a,b,c\in\G$ and suppose that $a+b+c\in\Lambda$ and $a+b\not\in\Lambda$. Then $c\neq 0$. If $c\in\Lambda$, then $\G=\langle c\rangle = 2a+\langle c\rangle = 2b+\langle c\rangle$, so we may assume that $\gcd(c,15)\in\{3,5\}$. Since the only coset of $\langle c\rangle$ disjoint from $\Lambda$ is $\langle c\rangle$, if $(2a+\langle c\rangle) \cap \Lambda = \emptyset$, then $2a\in\langle c\rangle$, which implies that $a\in\langle c\rangle$ since $|\G/\langle c\rangle| \in\{3,5\}$. So we may assume that $a\in\langle c\rangle$ and, similarly, that $b\in\langle c\rangle$. But this implies that $a+b+c\in\langle c\rangle$ which is disjoint from $\Lambda$, a contradiction.

    Now let $\G' = \mbz/6\mbz$ and $\Lambda' = \{4\}$. Then \ref{item:ep1} is not satisfied; indeed, if $a=0$, $b=1$, and $c=3$, then $a+b+c=4 \in \Lambda'$, but $a+b=1$, $2a+\langle c\rangle = \langle 3\rangle$, and $2b+\langle c\rangle = 2+\langle 3\rangle$; the latter two sets clearly do not contain 4.
    Now let us show that \ref{item:ep2} is satisfied. Let $a,b,c\in\G'$ and suppose that $2a+b+c=4$. We may assume that $b,c\neq 0$ since otherwise we have $2a+c=4$ or $2a+b=4$. We may also assume that $b,c\notin \{1,5\}$ since otherwise we have $\langle b\rangle=\G'$ or $\langle c\rangle=\G'$. Hence $b,c\in\{2,3,4\}$.
    Since $b+c = 4-2a \in\langle 2\rangle$, we have either $b=c=3$ or $b,c\in\{2,4\}$. In the first case, we have $4=2a+b+c=2a+2b\in 2a+\langle b\rangle$. In the second case, we have $4=2a+b+c \in 2a+\langle b\rangle$. 
\end{proof}

\subsection{Special cases}
Let us discuss some natural constraints on $A$-paths that can be encoded as instances of Theorem \ref{thm:mainep}.
\subsubsection{Modularity constraints} \label{subsubsec:mod}
Let $m$ be a positive integer and let $\Lambda\subseteq \mbz/m\mbz$. By Theorem \ref{thm:mainep}\eqref{thm:mainep1}, the family of $A$-paths whose length is in a congruence class in $\Lambda$ satisfies the \hi \ep property. In particular, for all integers $\ell$, the family of $A$-paths of length $\ell$ modulo $m$ satisfies the \hi \ep property. Thomas and Yoo \cite{thomas2023packingApaths} proved the following characterization of the \ep property of $\Lambda$-allowable $A$-paths when $|\Lambda|=1$.
\begin{theorem}
    [{\cite[Theorem 1.4]{thomas2023packingApaths}}] \label{thm:tymain}
    Let $\G$ be an abelian group and let $\ell\in \G$. Then the family of $\{\ell\}$-allowable $A$-paths satisfies the \ep property if and only if
    \begin{itemize}
        \item $\G \cong (\mathbb{Z}/2\mathbb{Z})^k$ where $k\in\mathbb{N}$ and $\ell=0$,
        \item $\G \cong \mathbb{Z}/4\mathbb{Z}$ and $\ell\in\{0,2\}$, or
        \item $\G \cong \mathbb{Z}/p\mathbb{Z}$ where $p$ is prime (and $\ell\in\G$ is arbitrary).
    \end{itemize}
\end{theorem}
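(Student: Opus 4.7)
The plan is to deduce Theorem~\ref{thm:tymain} from Theorem~\ref{thm:mainep}\eqref{thm:mainep2}, which reduces the problem to characterizing when the singleton $\Lambda = \{\ell\}$ satisfies the \ep condition.

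First I would unpack \ref{item:ep1} and \ref{item:ep2} for $\Lambda = \{\ell\}$. Given $a$ and $c \neq 0$, the third parameter in \ref{item:ep1} is forced to be $b = \ell - a - c$, and both conditions $\ell \in 2a + \gen{c}$ and $\ell \in 2b + \gen{c}$ unwind to $\ell - 2a \in \gen{c}$ (using that $\gen{c}$ is a subgroup and contains its negatives). Hence \ref{item:ep1} collapses to the succinct containment
\[
\ell + 2\G \subseteq S, \quad\text{where}\quad S := \bigcap_{c \in \G \setminus \{0\}} \gen{c}.
\]
Similarly, \ref{item:ep2} becomes: for all $b, c \in \G$ with $\ell - b - c \in 2\G$, either $b \in \gen{c}$ or $c \in \gen{b}$.

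Next I would compute $S$ via the fundamental theorem of finite abelian groups. If $\G$ is cyclic of prime-power order $p^k$, then its subgroups form a chain and $S$ is the unique minimal subgroup of order $p$; in every other case ($\G$ cyclic with two distinct prime divisors, or $\G$ non-cyclic) the $p$-torsion of $\G$ for any prime $p$ dividing $|\G|$ already exhibits two incomparable minimal subgroups, forcing $S = \{0\}$. The displayed containment then reads off all $(\G, \ell)$ satisfying \ref{item:ep1}: (i) $\G \cong \mathbb{Z}/p\mathbb{Z}$ with $S = \G$, so any $\ell$ works; (ii) $\G \cong \mathbb{Z}/4\mathbb{Z}$ with $S = 2\G = \{0, 2\}$, forcing $\ell \in \{0, 2\}$; and (iii) $\G$ non-cyclic with $S = \{0\}$, which forces $2\G = \{0\}$ and $\ell = 0$, yielding $\G \cong (\mathbb{Z}/2\mathbb{Z})^k$. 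The remaining cases each fail: cyclic $\mathbb{Z}/p^k\mathbb{Z}$ with $p$ odd and $k \geq 2$ has $2\G = \G \not\subseteq S$; cyclic $\mathbb{Z}/2^k\mathbb{Z}$ with $k \geq 3$ has $|2\G| = 2^{k-1} > 2 = |S|$; cyclic groups with two distinct prime divisors have $|S| = 1$ but $|2\G| \geq 2$; and $\mathbb{Z}/4\mathbb{Z}$ with $\ell$ odd has $\ell + 2\G = \{1, 3\}$ disjoint from $S$.

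Finally I would verify \ref{item:ep2} for the three listed families: in $\mathbb{Z}/p\mathbb{Z}$ any nonzero element generates $\G$, so the disjunction is immediate; in $(\mathbb{Z}/2\mathbb{Z})^k$ with $\ell = 0$ the condition $2a + b + c = 0$ forces $c = b$, giving $b \in \gen{c}$; in $\mathbb{Z}/4\mathbb{Z}$ with $\ell \in \{0, 2\}$ the hypothesis gives $b + c \in 2\G$, and a direct check on the finitely many pairs $(b, c)$ confirms $b \in \gen{c}$ or $c \in \gen{b}$. I expect the main obstacle to be the structural bookkeeping in the second step, but anchoring the argument on the single containment $\ell + 2\G \subseteq S$ keeps the case analysis transparent and shows that \ref{item:ep1} alone already pins down the trichotomy.
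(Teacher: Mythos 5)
Your reduction is exactly the route the paper intends: Theorem~\ref{thm:tymain} is quoted from the earlier work rather than proved here, and the surrounding discussion asserts precisely that the trichotomy matches the \ep condition for $\Lambda=\{\ell\}$, so that Theorem~\ref{thm:mainep}\eqref{thm:mainep2} recovers it. Your algebra checks out: \ref{item:ep1} for a singleton does collapse to $\ell+2\G\subseteq S$ with $S=\bigcap_{c\neq 0}\gen{c}$ (both disjuncts reduce to $\ell-2a\in\gen{c}$, and the $c=0$ case is absorbed by $a+b=\ell$); the computation of $S$ (the unique minimal subgroup for cyclic $p$-groups, trivial otherwise) is right; the resulting case analysis yields the trichotomy; and \ref{item:ep2}, rewritten as ``$\ell-b-c\in 2\G$ implies $b\in\gen{c}$ or $c\in\gen{b}$,'' holds in all three listed families. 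One phrasing quibble: for non-cyclic $\G$ it is \emph{some} prime $p$ whose Sylow $p$-subgroup is non-cyclic that produces two incomparable minimal subgroups, and for cyclic $\G$ with two distinct prime divisors the two minimal subgroups sit at different primes; the conclusion $S=\{0\}$ is correct either way.

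The one genuine gap is that Theorem~\ref{thm:tymain} is stated for an \emph{arbitrary} abelian group, whereas Theorem~\ref{thm:mainep} applies only to finite ones, and the equivalence you rely on is in fact false in the infinite setting: for $\G$ an infinite elementary abelian $2$-group with $\ell=0$, your own reformulations show that both \ref{item:ep1} and \ref{item:ep2} hold ($2\G=\{0\}=S$, and $b+c\in 2\G$ forces $b=c$), yet such $\G$ is excluded from the first bullet since $k\in\mathbb{N}$ is finite. So the \ep property fails there for a reason the \ep condition cannot detect, and no patch within your framework will produce that direction; the infinite case genuinely requires the separate arguments of the cited paper. Your proof is complete for finite $\G$ (which is all this paper's deduction claims), but as a proof of the theorem as stated it should either restrict to finite groups or supply an independent argument for infinite $\G$.
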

    It is not difficult to see that that if none of the three conditions in Theorem \ref{thm:tymain} are satisfied, then the \ep property fails (in fact, \ref{item:ep1} fails; see \cite[\S2.3, 2.4]{thomas2023packingApaths}). The proof of the converse is much more involved; that the \ep property holds for $A$-paths of length 0 modulo 4 is the main result of Bruhn and Ulmer \cite{bruhn2022packing}, and that the \ep property holds for $A$-paths of length 0 modulo a prime is the main result of \cite{thomas2023packingApaths}. However, it is easy to see that if any one of the three conditions in Theorem \ref{thm:tymain} are satisfied, then the \ep condition holds for $\Lambda=\{\ell\}$. Hence, Theorem \ref{thm:mainep} generalizes Theorem \ref{thm:tymain}.

\subsubsection{$A$-paths through specified edges or vertices}
For a fixed positive integer $k$, let $F_1,\dots,F_k$ be edge sets, and suppose that we are interested in the family $\mcf$ of $A$-paths that contain at least one edge in $F_i$ for each $i\in[k]$. Then $\mcf$ can be modelled as the $\Lambda$-allowable $A$-paths in $(\mbz/2\mbz)^k$-labelled graphs as follows.

Define $\Gamma = (\mbz/2\mbz)^k$ and let $\{g_1,\dots,g_k\}$ be a set of generators of $\Gamma$.
Let $G$ be a simple graph with $A\subseteq V(G)$ and $F_1,\dots,F_k\subseteq E(G)$. Let $(G',\g)$ be the $\G$-labelled graph obtained from $V(G)$ as follows.  
For each edge $e=uv\in E(G)$, let $\mcs_e$ denote the power set of $\{i\in[k]:e\in F_i\}$.
For each $S\in\mcs_e$, we put an edge $e_S$ in $G'$ joining $u$ and $v$ with label $\g(e_S) = \sum_{i\in S} g_i$. 
Note that for each $e=uv\in E(G)$, there are $2^{|\mcs_e|}$ parallel edges joining $u$ and $v$ in $G'$.

Let $\Lambda\subseteq\Gamma$ be the set consisting of the single element $\sum_{i\in[k]}g_i$.
Clearly, every $\Lambda$-allowable $A$-path in $G'$ corresponds to an $A$-path in $G$ in $\mcf$ with the same sequence of vertices.
Conversely, for each $A$-path in $G$ in $\mcf$, there is a $\Lambda$-allowable $A$-path in $G'$ with the same sequence of vertices with the appropriate choices of edges.
Hence, by Theorem \ref{thm:mainep}\eqref{thm:mainep1}, $\mcf$ satisfies the \hi \ep property.

Similarly, if $U_1,\dots,U_k$ are vertex sets, then the family $\mcf$ of $A$-paths containing at least one vertex in $U_i$ for each $i\in[k]$ satisfies the \hi \ep property; this can be obtained by setting $F_i$ to be the set of edges incident to $U_i$ for each $i\in[k]$.

In both of the above settings, $\mcf$ satisfies the full \ep property if and only if $k=1$. The case $k=1$ is implied by Theorem \ref{thm:mainep}\eqref{thm:mainep2} and also by previously known results \cite{Chudnovsky2006, wollan2010packing}. If $k>1$, then the obstruction in Figure \ref{fig:obs2}, with $F_1$ and $F_2=\dots=F_k$ being the sets of edges labelled $b$ and $c$ respectively (or in the vertex case, letting $U_1$ and $U_2=\dots=U_k$ be the sets of vertices created by subdividing once every edge labelled $b$ and $c$ respectively), shows that $\mcf$ does not satisfiy the \ep property.

\subsubsection{$A$-$B$-paths}
Let $A$ and $B$ be vertex sets. An \emph{$A$-$B$-path} is a path with one endpoint in $A$, the other endpoint in $B$, and internally disjoint from $A\cup B$.
Let $\Gamma$ be a finite abelian group and let $\Lambda\subseteq \Gamma$. Suppose that we are interested in $\Lambda$-allowable $A$-$B$-paths in $\Gamma$-labelled graphs. 

Define $\G' = \G\times (\mbz/2\mbz)^2$.
Given a $\G$-labelled graph $(G,\g)$, we define a $\G'$-labelled graph $(G',\g')$ as follows. Set $V(G')=V(G)$. For each edge $e=uv\in E(G)$, we assign $\g'(e)$ as follows.
\begin{itemize}
    \item If neither $u$ nor $v$ is in $A\cup B$, then add $e$ to $E(G')$  with label $\g'(e) = (\g(e),(0,0))$.
    \item If exactly one of $u,v$ is in $A\cup B$, say $u$, then 
    \begin{itemize}
        \item if $u\in A$, add an edge $e_1$ joining $u$ and $v$ to $E(G')$ with label $\g'(e_1) = (\g(e),(1,0))$, and
        \item if $u\in B$, add an edge $e_2$ joining $u$ and $v$ to $E(G')$ with label $\g'(e_2) = (\g(e),(0,1))$.
    \end{itemize}
    Note that if $u\in A\cap B$, then $e_1$ and $e_2$ are parallel edges in $G'$.
    \item If $u\in A$ and $v\in B$ or $u\in B$ and $v\in A$, then add $e$ to $E(G')$ with label $(\g(e),(1,1))$.
    Note that if $u,v$ are both $A-B$ or both in $B-A$, then $e$ does not belong to any $A$-$B$-path in $G$.
\end{itemize}
Define $\Lambda' = \{(g,(1,1)):g\in\Lambda\}$.
Then $\Lambda$-allowable $A$-$B$-paths in $(G,\g)$ correspond to $\Lambda'$-allowable $(A\cup B)$-paths in $(G',\g')$ and vice versa.
Hence, by Theorem \ref{thm:mainep}\eqref{thm:mainep1}, the family of $\Lambda$-allowable $A$-$B$-paths satisfy the \hi \ep property.

We remark that the full \ep property does not hold for $\Lambda$-allowable $A$-$B$-paths whenever $\emptyset \neq \Lambda \subsetneq \G$. Indeed, if $\emptyset \neq \Lambda \subsetneq \G$, then there exist $a \notin \Lambda$ and $c\in \G$ such that $a+c \in \Lambda$.
Let $b=0$. Then the obstruction in Figure \ref{fig:obs1}, with $A$ being the set of vertices of degree 1 incident to edges labelled $a$ and $B$ being the set of vertices of degree 1 incident to edges labelled $b$, shows that $\Lambda$-allowable $A$-$B$-paths do not satisfy the \ep property.
If $\Lambda=\G$, then all $A$-$B$-paths are $\Lambda$-allowable $A$-$B$-paths, so we have the \ep property in this case by Menger's theorem.

\subsubsection{Weak $A$-$B$-paths} \label{subsubsec:weakABpaths}
A \emph{weak $A$-$B$-path} is a path with one endpoint in $A$ and the other endpoint in $B$; its internal vertices are allowed to be in $A\cup B$.
Chekan et al. \cite{chekan2024half} showed that weak $A$-$B$-paths of odd length satisfy the \hi \ep property. We generalize this to $\Lambda$-allowable weak $A$-$B$-paths for arbitrary finite abelian groups $\Gamma$ and $\Lambda\subseteq\Gamma$.

Let $(G,\g)$ be a $\G$-labelled graph and let $A,B\subseteq V(G)$. Let $G'$ be the graph obtained from $G$ as follows. For each vertex $a\in A$, add a new vertex $a'$ and an edge $aa'$. Define $A' = \{a':a\in A\}$. 
For each vertex $b\in B$, add a new vertex $b'$ and an edge $bb'$. Define $B' = \{b':b\in B\}$. 
Define $\G' = \G\times (\mbz/2\mbz)^2$ and a $\G'$-labelling $\g'$ of $G'$ as follows. For $e\in E(G')$,
\begin{align*}
    \g'(e)=
    \left\{
	\begin{array}{ll}
		(\g(e),(0,0))  & \mbox{if $e \in E(G)$,}\\
		  (0,(1,0)) & \mbox{if $e$ is incident to $A'$,} \\
            (0,(0,1)) & \mbox{if $e$ is incident to $B'$.}
	\end{array}
\right.
\end{align*}
Define $\Lambda' = \{(g,(1,1)):g\in \Lambda\}$.
Then $\Lambda$-allowable weak $A$-$B$-paths in $(G,\g)$ correspond to $\Lambda'$-allowable $(A'\cup B')$-paths in $(G',\g')$. Hence, by Theorem \ref{thm:mainep}\eqref{thm:mainep1}, the family of $\Lambda$-allowable weak $A$-$B$-paths satisfies the \hi \ep property.

Note that multiple constraints can be combined by taking direct products of the groups and the $\Lambda$'s. For example, if $A, B, U$ are vertex sets and $F$ is an edge set, then the family of $A$-$B$-paths that contain at least one vertex of $U$, contain at least one edge of $F$, and has length 0 modulo 3 satisfies the \hi \ep property.

\subsubsection{More general endpoint constraints}
Let $\mcs=\{S_1,\dots,S_k\}$ be a partition of $A$ with a fixed number $k$ of parts.
Let $H$ be a graph with vertex set $\mcs$ with possibly loops (but no parallel edges).
Let us say that an $A$-path $P$ with endpoints $a,b$ is \emph{$H$-feasible} if the part of $\mcs$ containing $a$ is adjacent in $H$ to the part of $\mcs$ containing $b$.
Note that $A$-$B$-paths are the special case where $k=2$ or $k=3$, depending on whether $A\cap B=\emptyset$.
Moreover, if $H\cong K_s$, then $H$-feasible paths correspond to ``$\mcs$-paths'' considered by Mader \cite{Mader1978} (albeit here we need $|\mcs|$ bounded). 
It is easy to see that $\Lambda$-allowable $H$-feasible paths satisfy the \hi \ep property by encoding them in a similar manner to $A$-$B$-paths. In fact, we can put different constraints on the edges of $H$.

Let $\Gamma$ be a finite abelian group and let $\Lambda$ be a function that maps each edge $f$ of $H$ to a subset $\Lambda(f)$ of $\Gamma$. Let us say that a $\G$-labelled nontrivial path $P$ with endpoints in $A$ is \emph{$(H,\Lambda)$-feasible} if the following condition is satisfied: 
\begin{itemize}
    \item Let $a,b$ denote the endpoints of $P$ and let $S_i,S_j$ denote the parts of $\mcs$ containing $a,b$ respectively. Then $f:=\{S_i,S_j\}\in E(H)$ and the length of $P$ is in $\Lambda(f)$.
\end{itemize}
Then the family of $(H,\Lambda)$-feasible $A$-paths satisfies the \hi \ep property, as we now show.

Let $\G' = \G \times (\mbz/3\mbz)^k$, and let $\{g_1,\dots,g_k\}$ be a set of generators of $(\mbz/3\mbz)^k$. 
Given a $\G$-labelled graph $(G,\g)$, we define a $\G'$-labelling $\g'$ of $G$ as follows. For each edge $e=uv\in E(G)$,
\begin{itemize}
    \item if neither $u$ nor $v$ is in $A$, then define $\g'(e) = (\g(e),0)$.
    \item if exactly one of $u$ and $v$ is in $A$, say $u \in S_i$, then define $\g'(e) = (\g(e),g_i)$.
    \item if both $u$ and $v$ are in $A$, say $u\in S_i$ and $v\in S_j$ (where possibly $i=j$), then define $\g'(e) = (\g(e),g_i+g_j)$.
\end{itemize}
Define $\Lambda' = \{(g,g_i+g_j): f:=\{S_i,S_j\}\in E(H) \text{ and } g\in \Lambda(f)\}$. Then it is easy to see that an $A$-path in $(G,\g)$ is $(H,\Lambda)$-feasible if and only if it is $\Lambda'$-allowable in $(G,\g')$.
Hence, by Theorem \ref{thm:mainep}\eqref{thm:mainep1}, the family of $(H,\Lambda)$-feasible $A$-paths satisfies the \hi \ep property.

As an example, if $A,B,C$ are disjoint vertex sets, then we can ask for $(A\cup B\cup C)$-paths $P$ such that
\begin{itemize}
    \item $P$ has its endpoints in distinct sets of $\{A,B,C\}$,
    \item if $P$ has one endpoint in $B$, then $P$ has length 1 modulo 3,
    \item if $P$ has one endpoint in $A$ and one endpoint in $C$, then $P$ has length 2 modulo 3.
\end{itemize}
These $(A\cup B\cup C)$-paths satisfy the \hi \ep property.

One can adapt the argument for weak $A$-$B$-paths to obtain the analogous result for $(H,\Lambda)$-feasible \emph{weak $A$-paths}, where we allow the paths to intersect $A$ internally.

We also remark that Theorem \ref{thm:mainep} (and hence all of the special cases above) holds for \emph{long} $\Lambda$-allowable $A$-paths as well; that is, for every fixed positive integer $\ell$, the family of $\Lambda$-allowable $A$-paths with at least $\ell$ edges satisfies the \hi \ep property, and satisfies the full \ep property if and only if $\Lambda$ satisfies the \ep condition. We briefly discuss how the proof of Theorem \ref{thm:mainep} can be modified to accomodate long paths at the end of Section \ref{sec:obstructions}.

\subsection{Paper outline}
We begin with basic definitions and preliminaries in Section \ref{sec:prelim}. In Section \ref{sec:obstructions}, we describe the obstructions to the \ep property of $A$-paths in group-labelled graphs, state our main structure theorem (Theorem \ref{thm:mainApathstructure}), and show that Theorem \ref{thm:mainApathstructure} implies Theorem \ref{thm:mainep}.
In Section \ref{sec:lemmas}, we give an outline of the proof of Theorem \ref{thm:mainApathstructure} and prove several lemmas.
In Section \ref{sec:proof}, we combine these lemmas to complete the proof of Theorem \ref{thm:mainApathstructure}.

\section{Preliminaries} \label{sec:prelim}

In this paper, all graphs are undirected graphs that have no loops, but may have multiple edges.
For a graph~$G$, we denote by~${V(G)}$ and~${E(G)}$ the vertex set and the edge set of~$G$, respectively; for a set~$A$ of vertices in~$G$, we denote by~${G - A}$  the graph obtained from~$G$ by deleting all the vertices in~$A$, 
and denote by~${G[A]}$ the subgraph of~$G$ induced by~$A$.
For two graphs~$G$ and~$H$, let 
${G \cup H := (V(G) \cup V(H), E(G) \cup E(H))}$ and $G \cap H := (V(G) \cap V(H), E(G) \cap E(H)).$

Let~${\mathbb N}$ denote the set of all non-negative integers, and let $\mathbb Z$ denote the set of all integers. For an integer~$m$, let~${[m]}$ denote the set of positive integers at most~$m$.

\subsection{3-blocks}
Let $G$ be a graph. A subset $U\subseteq V(G)$ is \emph{2-inseparable} if no set of at most two vertices of $G$ disconnects two vertices of $U$ in $G$.
A 2-inseparable set is \emph{maximal} if it is not properly contained in another 2-inseparable set.
Observe that if $U$ is a maximal 2-inseparable set, then for every component $C$ of $G-U$, there are at most two vertices in $U$ adjacent to a vertex in $C$; otherwise, there is a vertex $v\in V(C)$ such that there are three paths from $v$ to $U$ in $G$ that intersect each other only at $v$, and $U\cup\{v\}$ would be a larger 2-inseparable set.
A \emph{$U$-bridge} is a subgraph $H$ of $G$ such that $H$ is either an edge with both ends in $U$, or the subgraph obtained from a connected component $C$ of $G-U$ by adding the vertices $X\subseteq U$ adjacent to $C$ as well as all edges joining $X$ to $C$.
The \emph{attachments} of a $U$-bridge $H$ are the vertices in $U\cap V(H)$.
A \emph{3-block} of $G$ is a graph $B$ such that $V(B)$ is a maximal 2-inseparable set in $G$ and for all $u,v\in V(B)$, there is an edge $uv\in E(B)$ if and only if there exists a $V(B)$-bridge in $G$ whose attachments are $u$ and $v$. Note that, if $|V(B)|\geq 4$, then $B$ is a 3-connected graph.

\subsection{Group-labelled graphs}
Let $\G$ be an abelian group and let $H_1, H_2$ be subsets of $\G$. The \emph{set addition} of $H_1$ and $H_2$ is the set $H_1+H_2:=\{h_1+h_2:h_1\in H_1, h_2\in H_2\}$. If $H_1$ is a singleton, say $H_1=\{h_1\}$, then we may simply write $h_1+H_2$ to denote $H_1+H_2$. 
Let $\G'$ be a subgroup of $\G$. A \emph{coset} of $\G'$ is a set of the form $h+\G'$ for some $h\in \G$. 
We write $\G/\G'$ to denote the group of cosets of $\G'$ under set addition (called the \emph{quotient group} of $\G$ by $\G'$).

A \emph{$\G$-labelled graph} is a pair $(G,\g)$ of a graph $G$ and a labelling $\g:E(G)\to\G$.
We allow $G$ to have parallel edges as long as they have different labels. For a $\Gamma$-labelled graph~${(G,\g)}$ and a subgraph~${H}$ of~${G}$, we define~${\g(H):=\sum_{e \in E(H)} \g(e)}$ and call $\g(H)$ the \emph{$\g$-length} of $H$. If $\g(H)\neq 0$, then we say that $H$ is \emph{$\g$-nonzero}.

A \emph{3-block} of a $\G$-labelled graph $(G,\g)$ is a $\G$-labelled graph $(B,\g_B)$ such that $V(B)$ is a maximal 2-inseparable set of $G$ with $|V(B)|\geq 4$ and for all $u,v\in V(B)$, there is an edge $e\in E(B)$ with endpoints $u,v$ and with $\g_B(e)=g$ if and only if there is a $V(B)$-bridge $H$ of $G$ and a $u$-$v$-path $P$ in $H$ with $\g(P)=g$.

Let $\G'$ be a subgroup of $\G$.
We say that $(G,\g)$ is \emph{$\Gamma'$-balanced} if every cycle $C$ in $G$ with $|V(C)|\geq 3$ satisfies $\gamma(C)\in\Gamma'$. 
Note that every $\G$-labelled graph is $\G$-balanced.
If $(G,\g)$ is $\{0\}$-balanced, we simply say that it is \emph{balanced}.

\begin{lemma}[{\cite[Lemma 2.5]{thomas2023packingCycles}}] \label{lem:balanced3block}
    Let $\G$ be an abelian group, let $(G,\g)$ be a $\G$-labelled graph, let $(B,\g_B)$ be a 3-block of $(G,\g)$, and let $A\subseteq V(B)$ with $|A|\geq 3$. 
    If every $A$-path in $G$ has $\g$-length $0$, then $(B,\g_B)$ is balanced.
\end{lemma}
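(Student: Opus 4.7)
The plan is to prove the contrapositive: assume $g := \g_B(C) \neq 0$ for some cycle $C$ in $B$ with $\abs{V(C)} \geq 3$, and derive an $A$-path in $G$ with nonzero $\g$-length. The first step is to lift $C$ to a cycle $\widetilde{C}$ in $G$. For each edge $e = uv$ of $C$, the definition of 3-block furnishes a $u$-$v$-path $P_e$ through the corresponding $V(B)$-bridge with $\g(P_e) = \g_B(e)$. Since $\abs{V(C)} \geq 3$, the edges of $C$ have pairwise distinct endpoint sets and therefore arise from pairwise distinct, internally vertex-disjoint $V(B)$-bridges; concatenating the $P_e$'s along $C$ yields a cycle $\widetilde{C}$ in $G$ with $\g(\widetilde{C}) = g$ and $V(\widetilde{C}) \cap V(B) = V(C)$.

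Next, enumerate $V(C) = \{u_1,\dots,u_n\}$ cyclically along $\widetilde{C}$, and for each $u_i \in V(C) \setminus A$ invoke the 2-inseparability of $V(B)$ together with $\abs{V(B)} \geq 4$ and $\abs{A} \geq 3$ to route (via a Menger-type argument) an auxiliary path $R_i$ from $u_i$ to a vertex $a_i \in A$, chosen so that the $R_i$'s are pairwise internally vertex-disjoint and each internally disjoint from $\widetilde{C}$; the $a_i$'s should cycle through a fixed triple of distinct $A$-vertices so that $a_i \neq a_{i+1}$ and $a_{i-1} \neq a_{i+1}$ for all $i$. For $u_i \in A$, take $R_i$ to be the trivial path at $u_i$, so $a_i = u_i$. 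Write $r_i := \g(R_i)$ (hence $r_i = 0$ whenever $u_i \in A$) and $q_i := \g(\widetilde{Q}_i)$, where $\widetilde{Q}_i$ is the $u_i$-$u_{i+1}$-arc of $\widetilde{C}$; note that $\sum_i q_i = g$.

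The crucial observation is that the ``short'' concatenation $R_i^{-1}\cdot\widetilde{Q}_i\cdot R_{i+1}$ is an $A$-path for every $i$, and the ``long'' concatenation $R_{i-1}^{-1}\cdot\widetilde{Q}_{i-1}\cdot\widetilde{Q}_i\cdot R_{i+1}$ is an $A$-path whenever $u_i \notin A$ (its only internal $V(B)$-vertex being $u_i$). Applying the hypothesis $\g(P)=0$ to the short $A$-paths gives $q_i = -r_i - r_{i+1}$ for each $i$; substituting these into the equation for the long $A$-path through $u_i$ collapses to $-2r_i = 0$. Since $r_i = 0$ already when $u_i \in A$, we conclude that $2r_i = 0$ for every $i$, and then $g = \sum_i q_i = -\sum_i(r_i + r_{i+1}) = -2\sum_i r_i = -\sum_i 2r_i = 0$, contradicting $g \neq 0$.

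The main obstacle is the routing step: simultaneously securing all the $R_i$'s with the required internal disjointness while placing their endpoints in $A$ consistently. This is where the 2-inseparability of $V(B)$ and the size conditions $\abs{V(B)} \geq 4$, $\abs{A} \geq 3$ must be leveraged in a careful Menger-type argument, and some subcase analysis may be warranted depending on how many vertices of $V(C)$ already lie in $A$; note that the extreme case $V(C) \subseteq A$ is immediate, since the arcs of $\widetilde{C}$ between consecutive $V(C)$-vertices are then themselves $A$-paths of $\g$-length $0$ summing to $g$.
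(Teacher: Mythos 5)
Your proposal is a sketch with two unresolved gaps, and one of them makes the plan break as written.

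The colour assignment does not work. When $V(C)\cap A=\emptyset$, your constraints $a_i\neq a_{i+1}$ (cyclically) and $a_{i-1}\neq a_{i+1}$ for every $i$ are exactly a proper vertex colouring of $C_n^2$, the square of the $n$-cycle, where $n=|V(C)|$. ``Cycling through a fixed triple'' only closes up when $3\mid n$, and in fact $C_4^2=K_4$ and $C_5^2=K_5$, while $\chi(C_n^2)=4$ for every $n\ge 4$ with $3\nmid n$. The lemma only guarantees $|A|\ge 3$, so for infinitely many $n$ no admissible assignment of the $a_i$'s exists: some concatenation $R_{i-1}^{-1}\widetilde Q_{i-1}\widetilde Q_i R_{i+1}$ is then a closed walk rather than an $A$-path, and you do not obtain $2r_i=0$ for that $i$. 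Without those relations, the short $A$-paths alone only give $2\sum_i r_i=-g$, which is not a contradiction. Repairing this requires a different family of long $A$-paths (for instance $R_{i-1}^{-1}\widetilde Q_{i-1}\widetilde Q_i\widetilde Q_{i+1}R_{i+2}$ when $a_{i-1}=a_{i+1}$) and a correspondingly different chain of algebraic consequences, none of which is supplied.

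The routing itself is also not established, as you yourself note. It is not a routine Menger consequence of 2-inseparability: to obtain pairwise internally disjoint $R_i$'s that also avoid $V(\widetilde C)\setminus V(C)$, you would apply Menger in $G-(V(\widetilde C)\setminus V(C))$, but a small separator there combines with the possibly large set $V(\widetilde C)\setminus V(C)$ to give a large cut in $G$, so 2-inseparability of $V(B)$ in $G$ is not directly violated. Making this work needs to exploit the bridge structure of a 3-block (each $V(B)$-bridge has at most two attachments, and the deleted vertices lie in bridges whose two attachments are on $V(C)$); that argument is missing. As it stands the proposal identifies a plausible strategy but does not prove the lemma.
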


Suppose that $g$ is an element of $\G$ such that $2g=0$.
For a vertex $v$ in a $\G$-labelled graph $(G,\g)$, we say that the $\G$-labelled graph $(G,\g')$ where 
\begin{align*}
    \g'(e)=
    \left\{
	\begin{array}{ll}
		\g(e)+g  & \mbox{if $e$ is incident to $v$}\\
		\g(e) & \mbox{otherwise}
	\end{array}
\right.
\end{align*}
is obtained from $(G,\g)$ by \emph{shifting at $v$ by $g$}.
Observe that, since $2g=0$, $\g(H)=\g'(H)$ whenever $H$ is cycle or a path whose endpoints are not equal to $v$.
For two $\G$-labellings $\g,\g'$ of $G$, we say that $(G,\g)$ is \emph{shift-equivalent to} $(G,\g')$ if $(G,\g')$ can be obtained from $(G,\g)$ by a sequence of shifts.
We denote by $\bm{0}$ the $\G$-labelling that labels every edge 0.

\begin{lemma}
    [{\cite[Lemma 2.3]{thomas2023packingCycles}}] \label{lem:shiftequivalent}
    Let $\G$ be an abelian group and let $(B,\g_B)$ be a balanced $\G$-labelled graph such that $B$ is 3-connected.
    Then $(B,\g_B)$ is shift-equivalent to $(B,\bm{0})$.
\end{lemma}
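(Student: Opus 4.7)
The plan is to first show that the balanced hypothesis forces every edge label of $B$ to lie in the $2$-torsion subgroup $\G[2] := \{g \in \G : 2g = 0\}$, and then to construct an explicit sequence of shifts from a spanning tree of $B$.

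For the 2-torsion step, I would fix an edge $e = uv$ of $B$ and first argue that $e$ has no parallel edge. If some $u$-$v$-edge $f$ were parallel to $e$, then since $|V(B)| \geq 4$ and $B - \{u,v\}$ is nonempty and connected, both $u$ and $v$ have a neighbor outside $\{u,v\}$, so one can find a $u$-$v$-path $P$ of length $\geq 2$ through $B - \{u,v\}$; balance applied to the cycles $e \cup P$ and $f \cup P$ (each on $\geq 3$ vertices) would then give $\g_B(e) = -\g_B(P) = \g_B(f)$, contradicting the convention that parallel edges in a $\G$-labelled graph carry distinct labels. Hence there are no edges parallel to $e$, and Menger's theorem in the 3-connected graph $B$ yields three internally disjoint $u$-$v$-paths $P_1 = e$, $P_2$, $P_3$ with $|P_2|, |P_3| \geq 2$ (the latter because no path in $B - e$ can be a single parallel edge). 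Each cycle $P_i \cup P_j$ then has at least $3$ vertices, and balance gives
\begin{equation*}
    \g_B(e) + \g_B(P_2) \;=\; \g_B(e) + \g_B(P_3) \;=\; \g_B(P_2) + \g_B(P_3) \;=\; 0,
\end{equation*}
which combine to yield $2\g_B(e) = 0$.

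Next, I would fix a spanning tree $T$ of $B$ rooted at an arbitrary $r$, and for each $v \in V(B)$ define $s_v$ to be the $\g_B$-length of the unique $r$-$v$-path in $T$. Since all edge labels lie in $\G[2]$, so does each $s_v$, and hence shifting at $v$ by $s_v$ is a legal operation. Shifts by 2-torsion elements commute at different vertices and add at the same vertex, so applying these shifts in any order replaces the label $\g_B(uv)$ of each edge $uv$ by $\g_B(uv) + s_u + s_v$. For a tree edge $uv$ with $v$ a child of $u$, one has $s_v = s_u + \g_B(uv)$, hence $s_u + s_v = 2s_u + \g_B(uv) = \g_B(uv)$, giving new label $2\g_B(uv) = 0$. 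For a non-tree edge $uv$, the $u$-$v$-path $P$ in $T$ has length $\geq 2$ (no parallel edges), so $P \cup \{uv\}$ is a cycle on $\geq 3$ vertices; balance yields $\g_B(P) = -\g_B(uv) = \g_B(uv)$, and cancelling the shared prefix up to the least common ancestor of $u$ and $v$ via 2-torsion shows $s_u + s_v = \g_B(P)$, so the new label is again $2\g_B(uv) = 0$. Thus the composite shift sends $(B,\g_B)$ to $(B,\bm{0})$.

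The main obstacle will be the first step: one has to juggle Menger paths, balance, and the distinct-labels convention to handle parallel-edge configurations correctly. Once $\g_B$ is known to be $\G[2]$-valued, the rest is a standard spanning-tree gauge construction adapted to the unoriented (2-torsion-only) shift operation.
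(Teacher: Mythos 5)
The paper does not prove this lemma itself but cites it from \cite{thomas2023packingCycles}; your proof is correct and is the standard spanning-tree ``gauge fixing'' argument that one would expect there. Both halves check out: three internally disjoint $u$-$v$-paths (using that $B-e$ stays $2$-connected) force every edge label into the $2$-torsion subgroup, and shifting each vertex $v$ by the $\gamma_B$-length $s_v$ of its root-to-$v$ tree path changes the label of every edge $uv$ to $\gamma_B(uv)+s_u+s_v$, which vanishes by $2s_u=0$ for tree edges and by the balance condition for the fundamental cycle for non-tree edges.
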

Suppose that $(B,\g_B)$ is a 3-block of a $\G$-labelled graph $(G,\g)$. Let $v\in V(B)\subseteq V(G)$ and let $g\in \G$ with $2g=0$. Observe that if $(B,\g_B')$ and $(G,\g')$ are obtained from $(B,\g_B)$ and $(G,\g)$ respectively by shifting at $v$ by $g$, then $(B,\g_B')$ is a 3-block of $(G,\g')$.
Hence, Lemma \ref{lem:shiftequivalent} implies that, if $(B,\g_B)$ is balanced, then $(G,\g)$ is shift-equivalent to a $\G$-labelled graph $(G,\g')$ such that every $V(B)$-path in $G$ has $\g'$-length 0.

\begin{lemma}
    [{\cite[Theorem 1.1]{wollan2010packing}}]
    \label{lem:wollanApath}
    Let $\Gamma$ be an abelian group and let $(G,\gamma)$ be a $\G$-labelled graph with $A\subseteq V(G)$. Then for all positive integers $k$, either $G$ contains $k$ pairwise disjoint $\g$-nonzero $A$-paths or there exists $Z\subseteq V(G)$ with $|Z|<50k^4$ such that every $A$-path $P$ in $G-Z$ satisfies $\g(P)=0$. 
\end{lemma}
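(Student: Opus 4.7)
The plan is to prove Lemma~\ref{lem:wollanApath} by induction on $k$. The base case $k=1$ is immediate: either a $\g$-nonzero $A$-path exists, or $Z=\emptyset$ is a trivial hitting set. The non-trivial work is in the inductive step, where the goal is to iteratively peel off a single $\g$-nonzero $A$-path that is short enough to keep the quartic bound tight.

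For the inductive step, I would first invoke Gallai's theorem on $A$-paths (with its linear \ep bound). If $G$ has fewer than, say, $Ck$ pairwise disjoint $A$-paths for a suitable constant $C$, there is a set of $O(k)$ vertices meeting every $A$-path, hence every $\g$-nonzero $A$-path, and we are done. So we may assume $G$ contains a large collection $\mcp=\{Q_1,\dots,Q_{Ck}\}$ of pairwise disjoint $A$-paths. The plan is then to produce, out of the paths in $\mcp$ together with connecting paths in $G-\bigcup_i V(Q_i)$, a \emph{short} $\g$-nonzero $A$-path $P$ of length $O(k^3)$. Once $P$ is found, apply the inductive hypothesis to $G-V(P)$: either it contains $k-1$ pairwise disjoint $\g$-nonzero $A$-paths (which together with $P$ give the desired packing) or we obtain a hitting set $Z'$ with $|Z'|<50(k-1)^4$, and then $Z:=V(P)\cup Z'$ satisfies $|Z|<50(k-1)^4+O(k^3)<50k^4$.

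To construct such a short $P$, the key combinatorial step is an exchange argument. Examine the $\g$-lengths $\g(Q_i)$ along the paths in $\mcp$. If some $Q_i$ is already $\g$-nonzero and short, take $P=Q_i$. Otherwise, partition $\mcp$ into cosets according to $\g$-values and look for two paths $Q_i,Q_j$ in the same coset whose endpoints in $A$ can be swapped via a short connector $R$ in $G-\bigcup_{\ell\neq i,j}V(Q_\ell)$ to produce a rerouted $A$-path whose $\g$-length is shifted off zero. The existence of such a connector follows from the high connectivity guaranteed by having many $A$-paths; a Menger-style argument inside the complement of most of $\mcp$ forces a short rerouting, and the group-labelling ensures that by trying sufficiently many pairs one of the reroutings lies in a nonzero coset.

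The main obstacle is controlling the length of $P$ simultaneously with the structural reductions needed to guarantee the exchange argument succeeds. Naive rerouting can produce paths of unbounded length, which would blow up the recursion. Overcoming this requires either bounding the diameter of the relevant "bridge structure" between the $Q_i$'s (through iterated suppression of degree-two vertices and parallel-class reductions on the group labelling), or performing the argument inside a carefully chosen $3$-connected piece where Lemma~\ref{lem:shiftequivalent} applies and allows one to normalise the labelling to $\bm{0}$ on a large balanced portion. With such a normalisation, any $\g$-nonzero $A$-path must traverse a bounded number of "defect" edges, and a shortest-path argument through these defects yields the required short $P$. Quantifying the defects carefully produces the $50k^4$ bound.
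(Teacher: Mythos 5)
The paper does not prove Lemma~\ref{lem:wollanApath} itself; it is cited verbatim from Wollan~\cite{wollan2010packing}, so there is no in-paper argument to compare against. Evaluated on its own terms, your proposal has a genuine gap at its load-bearing step: the claim that one can always extract a $\g$-nonzero $A$-path $P$ of length $O(k^3)$. This is false in general. The $A$-paths $Q_i$ produced by Gallai's theorem can each be arbitrarily long, and any rerouted path built from segments of the $Q_i$'s plus a connector inherits that unbounded length; there is simply no analogue here of the ``bounded average degree forces a short cycle'' fact that powers the classical Erd\H{o}s--P\'osa induction. A concrete obstruction: take two $A$-vertices joined by a single very long path with one nonzero edge in the middle. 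The unique $\g$-nonzero $A$-path has unbounded length, so deleting $V(P)$ does not give a set of size $O(k^3)$, and the recursion $|Z|<50(k-1)^4+O(k^3)$ collapses. Your suggested repairs do not rescue this: suppressing degree-$2$ vertices changes the underlying graph but not the $\g$-length of any $A$-path and is not legitimate when the goal is a vertex hitting set in the original graph; and normalising to $\bm{0}$ inside a $3$-connected balanced block (Lemma~\ref{lem:shiftequivalent}) controls the labelling locally but gives no bound on how far apart the ``defect'' edges are, so a shortest-path-through-defects argument still yields paths of unbounded length. Relatedly, the exchange step (``by trying sufficiently many pairs one of the reroutings lies in a nonzero coset'') is asserted without a mechanism; nothing in having $Ck$ disjoint zero-length $A$-paths forces a nonzero rerouting to exist.

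The actual proof in~\cite{wollan2010packing} (and similarly the oriented-group precursor of Chudnovsky, Geelen, Gerards, Goddyn, Lohman, and Seymour) does not iteratively peel off short paths. It is a global structural argument: one analyses a maximum collection of disjoint $\g$-nonzero $A$-paths together with the bridges attaching to it, proves a local structure theorem describing how all $\g$-nonzero $A$-paths must interact with that collection, and reads off a polynomial-size hitting set directly from that structure when the packing is small. The polynomial $50k^4$ comes from counting attachment points in that structure theorem, not from summing path lengths. If you want an inductive-flavoured proof, the viable route is the tangle/minimal-counterexample framework that this paper itself uses for the main theorem (Lemma~\ref{lem:minctextangle} plus Theorem~\ref{thm:tanglewall}), where the object you delete to recurse is a small separator coming from a tangle of bounded order, not the vertex set of a constructed path.
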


\subsection{Walls}
Let $c,r\geq 3$ be integers.
The \emph{elementary $(c,r)$-wall} is the graph $W_{c,r}$ obtained from the graph with vertex set $[2c]\times [r]$ and edge set 
\[\{(i,j)(i+1,j): i\in[2c-1], j\in[r]\}\cup\{(i,j)(i,j+1): i\in[2c],j\in[r-1], i+j\text{ is odd}\}\]
by deleting the two vertices of degree 1.
For $i\in[c]$ and $j\in[r]$, the \emph{$j$-th row} of $W_{c,r}$ is the path $W_{c,r}[\{(i,j)\in V(W_{c,r}):i\in[2c]\}]$ and the \emph{$i$-th column} is the path $W_{c,r}[\{(i',j)\in V(W_{c,r}): i'\in\{2i-1,2i\}, j\in[r]\}]$.

A \emph{$(c,r)$-wall} is a subdivision $W$ of the elementary $(c,r)$-wall $W'$. The vertices of $W$ before the subdivision, corresponding to vertices of $W'$, are called the \emph{nails} of $W$, and the set of nails of $W$ is denoted $N^W$.
Note that, given a wall $W$ as a graph, there may be multiple choices for the nails with degree 2; we assume that such a choice accompanies each wall.
The set of vertices of degree 3 in $W$ is denoted $\branch(W)$.
The \emph{$i$-th column} and \emph{$j$-th row} of $W$ are the paths in $W$ corresponding to the $i$-th column and $j$-th row of $W_{c,r}$, respectively, and are denoted $C_i^W$ and $R_j^W$, respectively.
The \emph{order} of an (elementary) $(c,r)$-wall is $\min\{c,r\}$, and a \emph{$c$-wall} is a wall of order at least $c$.

For $3\leq c'\leq c$ and $3\leq r'\leq r$, a \emph{$(c',r')$-subwall} of a $(c,r)$-wall $W$ is a $(c',r')$-wall $W'$ that is a subgraph of $W$ such that every row and column of $W'$ is a subpath of a row and column of $W$, respectively.
If, in addition, the indices $i$ and $j$ for which $C_i^W$ contains a column of $W'$ and $R_j^W$ contains a row of $W'$ form contiguous subsets of $[2c]$ and $[r]$ respectively, then we call $W'$ a \emph{compact} subwall of $W$. The choice of nails of a subwall $W'$ of $W$ are inherited from the nails of $W$.
We say that $W'$ is \emph{$k$-contained} in $W$ if $W'$ is disjoint from the first and last $k$ rows and columns of $W$.
A \emph{$c'$-column-slice} of $W$ is a $(c',r)$-subwall of $W$.

The \emph{boundary} of a $(c,r)$-wall $W$, denoted $\partial W$, is the union of the first and last columns and rows; that is, $\partial W = C_1^W\cup C_c^W \cup R_1^W\cup R_r^W$. We write $\partial N^W$ to denote the set of nails on the boundary, i.e.,~$\partial N^W = V(\partial W)\cap N^W$.
Note that $\partial W$ defines a natural cyclic ordering of the vertices in $\partial N^W$.
For $i\in\{1,c\}$ and $j\in\{1,r\}$, let $v_{i,j} \in \partial N^W$ denote the corner nail of $W$ in the intersection of $C_i^W\cap P_j^W$.
We write $\prec_W$ to denote the linear order on $\partial N^W$ that is a restriction of the cyclic ordering given by $\partial W$ such that $v_{1,1}$ is the minimum element and $v_{1,1}\prec_W v_{1,r}\prec_W v_{c,r}\prec_W v_{c,1}$.
The \emph{column-boundary} of $W$ is $V(C_1^W\cup C_c^W)\cap \partial N^W$.

Let $\G$ be an abelian group and let $\G'$ be a subgroup of $\G$.
A $\G$-labelled wall $(W,\g)$ (with a given choice of nails $N^W$) is \emph{strongly $\Gamma'$-balanced} if every $N^W$-path $P$ in $W$ satisfies $\g(P)\in\Gamma'$.
If $\G'=\{0\}$, we may simply say \emph{strongly balanced}.
Note that if $(W,\g)$ is $\G'$-balanced, then every 1-contained subwall of $(W,\g)$ is shift-equivalent to a strongly $\G'$-balanced wall by Lemma \ref{lem:shiftequivalent}.

Thomassen \cite{Thomassen1988} showed that for any fixed integer $m>0$, every wall of large enough order contains a large subwall $W'$ such that every $N^{W'}$-path in $W'$ has length 0 modulo $m$. This result has been generalized to finite abelian groups:

\begin{lemma}[{\cite[Lemma 5.3]{gollin2024unified}}]
    \label{lem:balancedsubwall}
    Let $\Gamma$ be a finite abelian group and let $w$ be a positive integer. Then there exists an integer $f_{\ref{lem:balancedsubwall}}(w,|\Gamma|)$ such that every  $\Gamma$-labelled wall $(W,\gamma)$ of order at least $f_{\ref{lem:balancedsubwall}}(w,|\Gamma|)$ contains a strongly balanced subwall $W'$ of order at least $w$.
\end{lemma}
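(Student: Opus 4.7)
The plan is to prove the lemma by induction on $|\Gamma|$. The base case $|\Gamma| = 1$ is trivial: the zero labelling is strongly balanced for any choice of nails, so $f_{\ref{lem:balancedsubwall}}(w, 1) := w$ works. For the inductive step with $|\Gamma| > 1$, fix a maximal proper subgroup $\Gamma' \subsetneq \Gamma$, so that the quotient $\Gamma/\Gamma'$ is cyclic of prime order $p$, and let $\pi : \Gamma \to \Gamma/\Gamma'$ denote the natural projection.

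The core substep is the following: for every positive integer $w_0$ and prime $p$, there exists $N(w_0, p)$ such that every $\mathbb{Z}/p\mathbb{Z}$-labelled wall of order at least $N(w_0, p)$ contains a subwall $W''$ of order at least $w_0$ in which every $N^{W''}$-path has label $0$. Granting this substep, apply it to $(W, \pi \circ \gamma)$ with $w_0 := f_{\ref{lem:balancedsubwall}}(w, |\Gamma'|)$ (which exists by the inductive hypothesis since $|\Gamma'| < |\Gamma|$) to obtain a subwall $W''$ of order at least $w_0$ in which every $N^{W''}$-path has $\gamma$-length in $\Gamma'$. Treating $W''$ as a wall with nail set $N^{W''}$ where each ``edge'' (that is, each $N^{W''}$-path in $W$) is labelled by its $\gamma$-length, $W''$ becomes a $\Gamma'$-labelled wall of order at least $f_{\ref{lem:balancedsubwall}}(w, |\Gamma'|)$. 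The inductive hypothesis then produces a strongly balanced subwall $W'$ of order at least $w$, which is also strongly balanced in the original $\Gamma$-labelled sense since $\{0\} \subseteq \Gamma'$. Hence $f_{\ref{lem:balancedsubwall}}(w, |\Gamma|) := N(f_{\ref{lem:balancedsubwall}}(w, |\Gamma'|), p)$ suffices.

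To prove the substep, for each row $R_j$ and each nail on $R_j$ record the partial sum of $\gamma$-labels along $R_j$ from one fixed endpoint, and do the same for each column; these partial sums take values in $\mathbb{Z}/p\mathbb{Z}$. A subwall $W''$ has every $N^{W''}$-path labelled $0$ precisely when, in each row (resp.~column) of $W''$, the partial sums at its chosen nails are constant. By pigeonhole, within every single row at least a $1/p$-fraction of nails share a common row partial-sum value, and similarly for columns. The task thus reduces to finding a large ``monochromatic grid'' of nails: column indices $I$ and row indices $J$ such that, for each $j \in J$, the row partial-sum along $R_j$ is constant on the nails at the column positions in $I$, with the symmetric condition for columns. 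This is achievable by iterated Ramsey-style pigeonhole: starting from a wall of sufficiently large order, alternate between refining the row selection (via pigeonhole on the $\mathbb{Z}/p\mathbb{Z}$-valued ``colour pattern'' that a row induces on the currently chosen columns) and refining the column selection, a bounded number of times depending only on $w_0$ and $p$. The main obstacle lies in the technical bookkeeping of Ramsey bounds to ensure the alternating refinements compose compatibly and that the final nail set corresponds to a valid subwall; this is routine given that subwalls need not be compact, and produces a computable $N(w_0, p)$.
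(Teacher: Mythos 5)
The paper cites this as \cite[Lemma~5.3]{gollin2024unified} without proving it, so there is no internal argument to compare against; I evaluate your proposal on its own terms. The basic plan is sound: record, at the nails along each row and column, partial sums of labels from a fixed endpoint, and extract by pigeonhole row indices $J$ and column indices $I$ at which these partial sums are constant in the appropriate directions, so that every branch path of the induced subwall telescopes to $0$. The contraction step used for the recursion --- passing from $W''$ to the elementary $\Gamma'$-labelled wall on $N^{W''}$ whose edges carry the $\gamma$-lengths of the branch paths --- is valid, since subwalls of the contraction expand to subwalls of $W$ with matching branch-path sums, and the resulting bound is computable.

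Two points should be tightened. First, the induction on $|\Gamma|$ via a prime quotient is unnecessary and degrades the bound: the partial-sum pigeonhole applies directly with $\Gamma$-valued sums, pigeonholing over $|\Gamma|$ rather than $p$ colours, whereas composing the recursion over a prime factorization yields a tower of exponentials whose height is the number of prime factors of $|\Gamma|$. Second, and more substantively, the ``alternating pigeonhole'' is underspecified in a way that matters. Writing $r(i,j)$ for the row-$j$ partial sum up to the nail in column $i$ and $c(i,j)$ for the analogous column sum, grouping rows by the restricted vector $(r(i,j))_{i\in I}$ produces rows with \emph{equal} vectors, which forces $r(i,\cdot)$ to be constant across $J$ for each fixed $i$ --- not the horizontal condition you need, which is that $r(\cdot,j)$ be constant across $I$ for each $j$. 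A correct implementation first shrinks $J$ to a bounded $J_1$ of size roughly $w_0|\Gamma|^{w_0}$, then pigeonholes the \emph{columns} by the vector $(r(i,j))_{j\in J_1}$ (equality of these vectors literally is horizontal constancy) to get $I_1$, shrinks $I_1$ to size $w_0$, and finally pigeonholes the surviving rows by $(c(i,j))_{i\in I_1}$ to get $J_2$; two such stages suffice. As written, your sketch does not pin down the shrinking steps or the transposed orientation of the pigeonhole, and without them the argument does not produce a strongly balanced subwall. The wall-versus-grid brick-pattern bookkeeping is, as you note, routine.
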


\subsection{Handlebars}
Let $(X,\prec)$ be a linearly ordered set. For distinct $x_1,x_2,y_1,y_2\in X$ with $x_1\prec x_2$ and $y_1\prec y_2$, we say that $\{x_1,x_2\}$ and $\{y_1,y_2\}$ are
\begin{itemize}
    \item \emph{in series} if either $x_2\prec y_1$ or $y_2\prec x_1$;
    \item \emph{nested} if either $x_1\prec y_1\prec y_2\prec x_2$ or $y_1\prec x_1\prec x_2\prec y_2$; and
    \item \emph{crossing} otherwise.
\end{itemize}
A set $S\subseteq \binom{X}{2}$ of pairwise disjoint sets is \emph{in series, nested}, or \emph{crossing}, respectively, if its elements are pairwise in series, nested, or crossing, respectively; in these cases, we say that $S$ is \emph{pure} (with respect to $\prec$).

Let $W$ be a $(c,r)$-wall.
A \emph{$W$-handle} is a $W$-path whose endpoints are on the column-boundary of $W$. 
We say that a set $\mcp$ of $W$-handles is \emph{pure} if the set of the sets of endpoints of the paths in $\mcp$ is pure with respect to $\prec_W$.

The following lemma shows that a sufficiently large subset of $\binom{X}{2}$ contains a large pure subset, regardless of the size of $X$. In other words, any large enough set of disjoint $W$-handles contains a large subset of $W$-handles that is pure with respect to $\prec_W$.
\begin{lemma}
    [{\cite[Lemma 25]{huynh2019unified}}] \label{lem:purelinkages}
    Let $k$ be a positive integer. Let $(X,\prec)$ be a linearly ordered set and let $S\subseteq\binom{X}{2}$ be a set of pairwise disjoint sets with $|S|\geq (k-1)^3+1$.
    Then $S$ contains a pure subset of size at least $k$.
\end{lemma}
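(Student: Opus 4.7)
The plan is to combine the Erd\H{o}s--Szekeres theorem with a Dilworth/Mirsky-style argument. First I would sort the pairs in $S$ by their smaller endpoint: write $S = \{\{x_i, y_i\} : i \in [n]\}$ with $x_i \prec y_i$ for each $i$ and $x_1 \prec x_2 \prec \cdots \prec x_n$, where $n = |S| \geq (k-1)^3 + 1$. This ordering is well-defined because the pairs are pairwise disjoint, so all $2n$ endpoints are distinct.

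A quick case analysis from the definitions shows that for $i < j$, the pairs $\{x_i, y_i\}$ and $\{x_j, y_j\}$ are nested if and only if $y_j \prec y_i$, and otherwise (series or crossing) satisfy $y_i \prec y_j$. So next I would apply the Erd\H{o}s--Szekeres theorem to the sequence $y_1, \ldots, y_n$: since $n \geq (k-1)^2 \cdot (k-1) + 1$, there exists either a decreasing subsequence of length $k$, or an increasing subsequence of length $(k-1)^2 + 1$. In the first case, the $k$ pairs indexed by the decreasing subsequence are pairwise nested (as $x_i \prec x_j$ together with $y_j \prec y_i$ forces $x_i \prec x_j \prec y_j \prec y_i$), so we are done.

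In the remaining case, I would restrict $S$ to the $m := (k-1)^2 + 1$ pairs indexed by the increasing subsequence and relabel so the indexing is still $1, \ldots, m$. Now no two of these pairs are nested, so every two are either in series or crossing. Define a binary relation $\leq_s$ on $[m]$ by declaring $i <_s j$ whenever $i < j$ and the pairs $\{x_i, y_i\}, \{x_j, y_j\}$ are in series, i.e., $y_i \prec x_j$. I would check that $\leq_s$ is a partial order; the only nontrivial axiom is transitivity, which is immediate from $y_i \prec x_j \prec y_j \prec x_\ell$ whenever $i <_s j <_s \ell$.

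Finally, I would apply Mirsky's theorem to this poset: if the longest chain has length at least $k$, then $k$ pairs are pairwise in series, producing a pure subset. Otherwise the $m = (k-1)^2 + 1$ elements can be covered by at most $k-1$ antichains, and pigeonhole gives an antichain of size at least $\lceil m/(k-1) \rceil = k$; such an antichain consists of $k$ pairs no two of which are in series, hence pairwise crossing, again yielding a pure subset. I do not anticipate any genuine obstacle: the proof is a clean two-layer Ramsey-type argument, and the only step requiring care is the case analysis identifying the nested configuration with the inequality $y_j \prec y_i$ (and correspondingly verifying transitivity of the series relation on the restricted family).
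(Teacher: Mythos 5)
Your proof is correct. Note that the paper does not give its own proof of this lemma but simply cites it from Huynh, Joos, and Wollan; your argument — sorting by left endpoint, applying Erd\H{o}s--Szekeres to the sequence of right endpoints to isolate the nested configuration, and then running Mirsky's theorem on the ``in series'' partial order to split the remaining pairs into chains (series) versus antichains (crossing) — is the standard two-layer Ramsey argument that gives the $(k-1)^3+1$ bound, and every step (the case analysis that non-nested pairs with $x_i\prec x_j$ satisfy $y_i\prec y_j$, the transitivity of $<_s$, and the pigeonhole computation $\lceil((k-1)^2+1)/(k-1)\rceil=k$) is verified correctly.
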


A \emph{$W$-handlebar} is a set $\mcp$ of disjoint $W$-handles that is pure and, in addition, there exist two paths $R,R'$ in $C_1^W\cup C_c^W$ such that each $W$-handle in $\mcp$ is a $V(R)$-$V(R')$-path.

A \emph{linkage} is a set of disjoint paths. For vertex sets~$A$ and~$B$, 
a set~$\mathcal{P}$ of disjoint $A$-$B$-paths is called an \emph{$A$-$B$-linkage}.
If $B$ is a subgraph of $G$, then an $A$-$V(B)$-linkage is also called an $A$-$B$-linkage.

Let $A$ be a vertex set disjoint from $V(W)$.
We say that an $A$-$W$-linkage $\mcq$ \emph{nicely links to $W$} if the endpoints of the paths in $\mcq$ in $W$ are vertices in $\partial N^W \cap V(R_1^W)$ of degree 2 in $W$.
An \emph{$A$-$W$-handle} is an $A$-$W$-path whose endpoint in $W$ is on the column-boundary of $W$. An \emph{$A$-$W$-handlebar} is a set $\mcq$ of $A$-$W$-handles such that the endpoints of the paths in $\mcq$ in $W$ are either all contained in $V(C_1^W)$ or all contained in $V(C_c^W)$.

Let $W'$ be a $c'$-column-slice of $W$ for some $3\leq c' \leq c$. If $P$ is a $W$-handle or an $A$-$W$-handle, the \emph{row-extension of $P$ to $W'$ in $W$} is a path $P'$ that is the $W'$-handle or the $A$-$W'$-handle, respectively, such that $P'$ contains $P$ and $P'$ is contained in the union of $P$ and the rows of $W$.
If $\mcp$ is a linkage whose paths are $W$-handles or $A$-$W$-handles, then the \emph{row-extension of $\mcp$ to $W'$ in $W$} is the set of row-extensions of the paths in $\mcp$ to $W'$ in $W$.

For each $i\in[2]$, let $\mcp_i$ be either a $W$-handlebar or an $A$-$W$-handlebar. If $\mcp_i$ is a $W$-handlebar, then there exist paths $R_i,R_i'$ in $C_1^W\cup C_c^W$ such that each path in $\mcp_i$ is a $V(R_i)$-$V(R_i')$-path; choose such $R_i,R_i'$ so that $V(R_i\cup R_i')$ is minimal, and define $U_i = V(R_i\cup R_i')$.
If $\mcp_i$ is an $A$-$W$-handlebar, then there exists a path $R_i$ in $C_1^W\cup C_c^W$ such that each path in $\mcp_i$ is an $A$-$V(R_i)$-path; choose such $R_i$ so that $V(R_i)$ is minimal, and define $U_i=V(R_i)$.
We say that $\mcp_1$ and $\mcp_2$ are \emph{non-mixing} if $U_1$ and $U_2$ are disjoint.

Let $\mcq_1,\mcq_2$ be $A$-$W$-handlebars such that either \begin{itemize}
    \item $\mcq_1=\mcq_2$ and $|\mcq_1|=2k$, or
    \item $\mcq_1$ and $\mcq_2$ are disjoint and non-mixing and $|\mcq_1|=|\mcq_2|=k$.
\end{itemize} 
A linkage obtained by \emph{joining $\mcq_1$ and $\mcq_2$} is a linkage $\mcp=\{P_1,\dots,P_k\}$ obtained by taking an enumeration $Q_1,\dots,Q_{2k}$ of $\mcq_1\cup\mcq_2$, letting $a_i$ denote the endpoint of $Q_i$ in $A$ for each $i\in[2k]$, and letting $P_i$ be obtained from $Q_{2i-1}$ and $Q_{2i}$ by identifying $a_{2i-1}$ and $a_{2i}$.
Observe that $\mcp$ can be made into either a nested or a crossing $W$-handlebar by an appropriate choice of the enumeration $Q_1,\dots,Q_{2k}$. 
If the endpoints of the paths in $\mcq_1\cup \mcq_2$ are all contained in the same column of $W$ (in particular, if $\mcq_1=\mcq_2$), then $\mcp$ can also be made into a $W$-handlebar in series.
If $\mcp$ is obtained by joining $\mcq_1$ and $\mcq_2$, we say that $\mcq_1$ and $\mcq_2$ are obtained by \emph{splitting} $\mcp$.

We conclude this subsection with a few technical lemmas from \cite{gollin2024unified, gollin2022unified} on dealing with $W$-handlebars in our proof of Theorem \ref{thm:mainApathstructure}. 
The first lemma shows that, given a balanced wall $W$, we can iteratively find disjoint sets of $\gamma$-nonzero $W$-handles.

\begin{lemma}[{\cite[Lemma 6.1]{gollin2024unified}}]
    \label{lem:addlinkage}
    There exist functions~${w_{\ref{lem:addlinkage}}\colon \mathbb{N}^2 \to \mathbb{N}}$ and~$f_{\ref{lem:addlinkage}} \colon \mathbb{N} \to \mathbb{N}$ satisfying the following. 
    Let~$k$, $t$, and~$c$ be positive integers with~${c \geq 3}$, let~${\Gamma}$ be an abelian group, and let~${(G, \gamma)}$ be a $\Gamma$-labelled graph.
    Let~$(W,\g)$ be a balanced wall in~$(G,\g)$ of order at least~${w_{\ref{lem:addlinkage}}(c,k)}$. 
    For each~${i \in [t-1]}$, let~$\mathcal{P}_i$ be a set of~$4k$ $W$-handles in~$G$ such that the paths in~${\bigcup_{i \in [t-1]} \mathcal{P}_i}$
    are pairwise disjoint. 
    If~$G$ contains at least~${f_{\ref{lem:addlinkage}}(k)}$ disjoint $\gamma$-nonzero ${\branch(W)}$-paths, 
    then there exist a $c$-column-slice~$W'$ of~$W$ and a set~$\mathcal{Q}_i$ of~$k$ pairwise disjoint $W'$-handles for each~${i \in [t]}$ such that 
    \begin{enumerate}
    [label=(\roman*)]
        \item for each~${i \in [t-1]}$, the set~$\mathcal{Q}_i$ is a subset of the row-extension of~$\mathcal{P}_i$ to~$W'$ in~$W$, 
        \item the paths in~${\bigcup_{i \in [t]} \mathcal{Q}_i}$ are pairwise disjoint, and
        \item the paths in~$\mathcal{Q}_t$ are $\gamma$-nonzero. 
    \end{enumerate}
\end{lemma}

The next lemma shows that, given a disjoint family $(\mcp_i:i\in[t])$ where each $\mcp_i$ is a set of disjoint $W$-handles, we can delete a few paths from each $\mcp_i$ to obtain non-mixing $W$-handlebars.

\begin{lemma}[{\cite[Lemma 5.1]{gollin2022unified}}]
    \label{lem:handlebarsnonmixing}
    There is a function ${f_{\ref{lem:handlebarsnonmixing}} \colon \mathbb{N}^2 \to \mathbb{N}}$ satisfying the following property. 
    Let $t$, $\theta$, $c$, and~$r$ be positive integers with~${r \geq 3}$ and~${c \geq 3}$, let~$W$ be a ${(c,r)}$-wall, and 
    let~${(\mathcal{P}_i \colon i \in [t])}$ be a family of pairwise disjoint sets of~${f_{\ref{lem:handlebarsnonmixing}}(t,\theta)}$ $W$-handles. 
    If the $W$-handles in $\bigcup_{i=1}^t \mathcal{P}_i$ are pairwise disjoint, 
    then there exists a family~${( \mathcal{P}^\ast_i \colon i \in [t] )}$ of pairwise non-mixing $W$-handlebars such that~${\mathcal{P}^\ast_i \subseteq \mathcal{P}_i}$ and~${\abs{\mathcal{P}^\ast_i} \geq \theta}$ for all~${i \in [t]}$. 
\end{lemma}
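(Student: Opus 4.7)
The plan is to process the sets $\mcp_1,\dots,\mcp_t$ one at a time, extracting from each a large $W$-handlebar $\mcp_i^*$ whose footprint on the column-boundary $C_1^W\cup C_c^W$ avoids those of the handlebars already chosen. The bound $f_{\ref{lem:handlebarsnonmixing}}(t,\theta)$ will be polynomial in $\theta$ and $t$, arising from compounding two losses at each stage: a pigeonhole factor to localize endpoints in a ``free'' region, and a cube-root loss from applying Lemma~\ref{lem:purelinkages}.

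First I would handle the single-set case. Each $W$-handle has its two endpoints on $C_1^W\cup C_c^W$, so by pigeonhole on the three endpoint-type cases (both endpoints in $C_1^W$, both in $C_c^W$, or one in each), I pass to a sub-family of size at least $\lceil |\mcp_i|/3\rceil$ of handles sharing a common type. The endpoint pairs form a set of pairwise disjoint pairs in the linearly ordered set $(\partial N^W,\prec_W)$, so Lemma~\ref{lem:purelinkages} extracts a pure sub-family of size roughly the cube root. Any pure sub-family whose endpoints share a common type is already a $W$-handlebar: in the ``one endpoint per column'' case, the minimal subpaths $R,R'$ in $C_1^W$ and $C_c^W$ containing the respective endpoints work directly; in the ``both endpoints in one column'' case, nested and crossing patterns yield two disjoint subpaths $R,R'$ of that column, while the series pattern forces $R=R'$ to be a single subpath covering all endpoints.

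For the multi-set case I would iterate. Suppose non-mixing handlebars $\mcp_1^*,\dots,\mcp_{i-1}^*$ have already been constructed with pairwise disjoint footprints $U_1,\dots,U_{i-1}$. Their union consists of at most $2(i-1)$ subpaths of $C_1^W\cup C_c^W$, so its complement is a union of at most $2i$ arcs. A handle of $\mcp_i$ is useful for preserving non-mixing only if \emph{both} of its endpoints lie in this complement. By pigeonhole over ordered pairs of complement-arcs, at least a fraction $1/(2t)^2$ of the handles of $\mcp_i$ have both endpoints in a common pair of such arcs; applying the single-set argument to these yields the desired $\mcp_i^*$ of size $\theta$, provided $|\mcp_i|\geq 3(2t)^2\bigl((\theta-1)^3+1\bigr)$. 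Setting $f_{\ref{lem:handlebarsnonmixing}}(t,\theta)$ equal to a slight inflation of this quantity suffices.

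The main obstacle is that the hypothesis ``handles in $\bigcup_i \mcp_i$ are pairwise disjoint'' only forces the endpoints of handles in $\mcp_i$ to differ from endpoints of previously chosen handles; in principle, an endpoint of a $\mcp_i$-handle could lie in the \emph{interior} of an earlier footprint $U_j$, which would make $U_i$ meet $U_j$ and violate non-mixing. The resolution is exactly the pigeonhole-over-complement-arcs step above, which explicitly discards such handles; the cost is the factor $(2t)^2$, which is affordable because $t$ is bounded, and it is absorbed into the polynomial dependence of $f_{\ref{lem:handlebarsnonmixing}}$ on $t$.
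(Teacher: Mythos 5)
Your high-level plan — iterate over the sets, pigeonhole endpoints into free arcs of the column boundary, then apply Lemma~\ref{lem:purelinkages} to get a pure subfamily — is the natural approach, and your single-set analysis is correct (once the endpoint type is fixed, the handlebar condition ``there exist $R,R'$'' is automatic). However, the iterative step as written has a genuine gap.

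You correctly identify that an endpoint of a $\mcp_i$-handle can lie in the \emph{interior} of an earlier footprint $U_j$, but your ``resolution'' only organizes the handles that happen to survive; it gives no lower bound on how many survive. The footprint $U_j$ is a minimal interval (or pair of intervals) on the column boundary spanning the $2\theta$ endpoints of $\mcp_j^*$, and such an interval can contain \emph{arbitrarily many} endpoints of later sets. Concretely, suppose all endpoints lie in $C_1^W$, the family $\mcp_1$ is in series with endpoints $a_1 \prec b_1 \prec a_2 \prec b_2 \prec \cdots \prec a_N \prec b_N$, and \emph{every} endpoint of $\mcp_2$ lies in the single gap $(b_1,a_2)$. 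Then any choice of $\mcp_1^*$ that contains both $H_1$ and $H_2$ yields $U_1 \supseteq [a_1,b_2] \supseteq (b_1,a_2)$, which swallows \emph{all} of $\mcp_2$; there are then zero ``useful'' handles for $i=2$, and the claimed $1/(2t)^2$ fraction is false. Since your argument never constrains \emph{which} $\theta$ handles are taken from the pure subfamily, this bad choice is permitted.

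The missing ingredient is an averaging step in the choice of each $\mcp_j^*$: one must first extract a pure subfamily of $\mcp_j$ of size on the order of $\theta t^2$ (not merely $\theta$), and then select the length-$\theta$ \emph{sliding window} inside it whose footprint intersects at most a $1/(2t)$-fraction of the surviving endpoints of each $\mcp_{j'}$ with $j'>j$. The windows tile the pure subfamily with bounded multiplicity, so such a window exists by a union-bound/averaging argument; the surviving fractions then compound as $(1-1/t)^{t-1}\geq e^{-1}$, which is harmless. This changes the bound from your $3(2t)^2((\theta-1)^3+1)$ to something with an extra $\mathrm{poly}(t,\theta)$ factor inside the cube, but remains polynomial. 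Without this step (or an alternative non-iterative localization), the proof does not go through.
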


Given a disjoint family $(\mcp_i:i\in[t])$ of non-mixing $W$-handlebars, it will be convenient (for the purpose of satisfying length constraints) to combine the paths in $\mcp_i$ into longer paths, each containing exactly $d_i$ paths of $\mcp_i$ for some fixed $d_i$. The following lemma shows that this can be done along the first and last columns of $W$, and describes the types of the resulting $W$-handlebars.

\begin{lemma}[{\cite[Lemma 5.2]{gollin2022unified}}]
    \label{lem:combining-handles} 
    Let~$t$, $c$, $r$, and $\theta$ be positive integers with ${c \geq 5}$ and ${r \geq 3}$, and let $d_i$ be a positive integer for each~${i \in [t]}$. 
    Let~$W$ be a ${(c,r)}$-wall in a graph~$G$ and let $W^\ast$ be a $(c-2)$-column-slice of~$W$ containing~$C^W_2$ and~$C^W_{c-1}$. 
    Let~${(\mathcal{P}_i \colon i \in [t])}$ be a family of pairwise disjoint non-mixing $W$-handlebars with~${\abs{\mathcal{P}_i} \geq \theta d_i}$ 
    for all~${i \in [t]}$. 
    Then
    there is a family~${(\mathcal{P}^\ast_i \colon i \in [t])}$ of pairwise disjoint non-mixing $W^\ast$-handlebars each of size~$\theta$ such that
    for each~${i \in [t]}$, every path $Q$ in $\mcp_i^*$ contains exactly $d_i$ paths in $\mcp_i$ and none in $\mcp_j$ for $j\neq i$.
    Moreover, for each~${i \in [t]}$, if~$d_i$ is even, then~$\mathcal{P}^\ast_i$ is in series and if~$d_i$ is odd, 
    then~$\mathcal{P}^\ast_i$ is of the same type as~$\mathcal{P}_i$. 
\end{lemma}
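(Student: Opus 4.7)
The plan is to build each $\mathcal{P}_i^*$ independently, exploiting the non-mixing hypothesis: for each $i$ let $U_i\subseteq V(C_1^W\cup C_c^W)$ be the associated region of $\mathcal{P}_i$; by assumption the $U_i$'s are pairwise disjoint, so the constructions for distinct $i$'s cannot collide. Fix $i$, discard excess handles so that $\abs{\mathcal{P}_i}=\theta d_i$, and index $\mathcal{P}_i=\{Q_1,\dots,Q_{\theta d_i}\}$ so that the endpoints of the $Q_j$ occur in the order induced by $\prec_W$ on $U_i$ (this order depends on whether $\mathcal{P}_i$ is in series, nested, or crossing, and on whether its endpoints lie on one column or on both).

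Partition the $Q_j$ into $\theta$ consecutive blocks of size $d_i$. Inside each block, concatenate the $d_i$ handles into a single path by inserting $d_i-1$ arcs taken from $C_1^W\cup C_c^W$ that join consecutive ``interior'' endpoints of the block; these arcs can be chosen to lie outside the column-boundary of $W^*$ (since $W^*$ omits $C_1^W$ and $C_c^W$), pairwise disjoint within the block, and disjoint from the two outermost endpoints, which become the free endpoints of the combined path. Then row-extend these two free endpoints along the unique rows of $W$ through them until they reach $C_2^W$ or $C_{c-1}^W$. Since each $Q_j$ is a $W$-path (hence internally disjoint from $W$) and the row-extensions touch $C_1^W\cup C_c^W$ only at the free endpoints, the combined object is a $W^*$-handle. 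Repeating over the $\theta$ blocks yields $\mathcal{P}_i^*$ of size $\theta$, and since the $U_i$'s are disjoint, the associated regions of the $\mathcal{P}_i^*$'s on $C_2^W\cup C_{c-1}^W$ are also disjoint, so the $\mathcal{P}_i^*$'s remain pairwise disjoint and non-mixing.

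The parity statement is a routine case check: each stitching arc either keeps the growing path's free endpoint on its current column ($C_1^W$ or $C_c^W$) or switches it, depending on whether the ``interior'' endpoint being joined sits on the opposite or the same column. After $d_i-1$ stitches, the two free endpoints of the path lie on the same column iff $d_i$ is even, which forces $\mathcal{P}_i^*$ to be in series; if $d_i$ is odd, the two free endpoints occupy the same relative positions as the outermost endpoints of the original block, so $\mathcal{P}_i^*$ inherits the type of $\mathcal{P}_i$. The main obstacle is this bookkeeping: carrying out the stitching inside each block so that the $d_i-1$ arcs, the two row-extensions, and the two intended free endpoints are all pairwise compatible in each of the series/nested/crossing cases, both when $\mathcal{P}_i$'s endpoints lie on a single column and when they straddle both; organizing the enumeration via $\prec_W$ restricted to $U_i$ reduces this to a direct case analysis.
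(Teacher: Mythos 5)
This statement (Lemma~\ref{lem:combining-handles}) is cited from \cite[Lemma~5.2]{gollin2022unified}, so the paper itself gives no proof here; I am evaluating your reconstruction on its own terms. Your plan — reduce to $\theta d_i$ handles, take $\theta$ consecutive blocks in the $\prec_W$-order, stitch each block into a single $W^*$-handle by $d_i-1$ boundary arcs on $C_1^W\cup C_c^W$, row-extend the two surviving free endpoints to $C_2^W$ or $C_{c-1}^W$, and use disjointness of the regions $U_i$ to keep everything pairwise disjoint and non-mixing — is exactly the right construction, and it matches what one would expect the cited proof to do. Your verification that the arcs stay inside $U_i$ and outside $W^*$, that the row-extensions meet $W^*$ only at their endpoints, and that the row-extension map is order-preserving (hence preserves non-mixing) are all correct observations.

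The one part that is not quite right as written is the parity bookkeeping. You phrase it as ``the two free endpoints lie on the same column iff $d_i$ is even, which forces $\mathcal{P}_i^*$ to be in series.'' But a $W$-handlebar need not straddle both columns: the two minimal paths $R_i,R_i'$ defining $U_i$ may both sit inside $C_1^W$ (or both inside $C_c^W$), in which case all endpoints are on one column and your ``same column'' criterion is vacuously true for every $d_i$. The correct invariant is the alternation between the two endpoint intervals $R_i$ and $R_i'$ (equivalently, between the $a$-side and $b$-side in the canonical enumeration), not between $C_1^W$ and $C_c^W$. With the natural stitching (connect the $b$-endpoint of $Q_j$ to the $b$-endpoint of $Q_{j+1}$, then the $a$-endpoint of $Q_{j+1}$ to the $a$-endpoint of $Q_{j+2}$, and so on), one gets free endpoints $\{a_1, a_{d_i}\}$ when $d_i$ is even (so consecutive blocks occupy disjoint sub-intervals of $R_i$ and are in series) and $\{a_1, b_{d_i}\}$ when $d_i$ is odd (so consecutive blocks reproduce the original nesting/crossing/series relation). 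Replacing ``column'' with ``side/interval'' in your parity paragraph fixes this; everything else in your outline carries through.
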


\subsection{Tangles}
A \emph{separation} of a graph $G$ is an ordered pair $(C,D)$ of subsets of $V(G)$ with $V(G)=C\cup D$ such that $G[C]\cup G[D]=G$.
Its \emph{order} is defined to be $|C\cap D|$.
A \emph{tangle of order $t$} of $G$ is a set $\mct$ of separations of $G$ of order at most $t$ such that
\begin{itemize}
    \item if $(C,D)$ is a separation of $G$ of order less than $t$, then $\mct$ contains exactly one of $(C,D)$ and $(D,C)$, and
    \item if $(C_1,D_1),(C_2,D_2),(C_3,D_3)\in\mct$, then $G[C_1]\cup G[C_2]\cup G[C_3]\neq G$.
\end{itemize}
The two \emph{sides} of a separation $(C,D)$ are $C$ and $D$. If $(C,D)\in \mct$, then $C$ is the \emph{$\mct$-small} side and $D$ is the \emph{$\mct$-large} side.

Let $W$ be a wall of order $w$ in a graph $G$. Then the set $\mct_W$ of all separations $(C,D)$ of $G$ of order less than $w$ such that  $G[D]$ contains a row of $W$ is a tangle of order $w$. We say that $\mct_W$ is the tangle \emph{induced} by $W$.
If $\mct$ is a tangle such that $\mct_W\subseteq \mct$, then we say that $\mct$ \emph{dominates} the wall $W$.

\begin{theorem}
    [\cite{RST1994}]\label{thm:tanglewall}
    There exists a function $f_{\ref{thm:tanglewall}}:\mbn\to\mbn$ such that, for all integers $w\ge 3$, if a graph $G$ admits a tangle $\mct$ of order at least $f_{\ref{thm:tanglewall}}(w)$, then there is a $w$-wall $W$ in $G$ such that $\mct$ dominates $W$.
\end{theorem}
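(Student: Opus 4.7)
The plan is to derive this theorem from the Excluded Grid Theorem of Robertson and Seymour. First I would show that a tangle of order $t$ in a graph $G$ forces the branch-width (and hence, up to a constant factor, the treewidth) of $G$ to be at least $t$: given any branch-decomposition of $G$ of width less than $t$, each internal edge of the decomposition tree produces a separation of order less than $t$ which $\mct$ orients consistently, and by an averaging argument on the tree one locates three separations $(C_1,D_1),(C_2,D_2),(C_3,D_3)\in\mct$ with $G[C_1]\cup G[C_2]\cup G[C_3]=G$, contradicting the third tangle axiom. Thus, by the Excluded Grid Theorem, choosing $f_{\ref{thm:tanglewall}}(w)$ large enough forces $G$ to contain a wall $W_0$ as a subgraph of some order $w_0$ much larger than $w$.

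The main obstacle is upgrading ``$G$ contains a large wall'' to ``$G$ contains a $w$-wall dominated by $\mct$.'' To handle this I would compare $\mct$ with the tangle $\mct_{W_0}$ induced by $W_0$, which has order $w_0$. Suppose some separation $(C,D)\in\mct$ of order less than $w$ satisfies $(D,C)\in\mct_{W_0}$; then $G[C]$ contains a full row of $W_0$, yet $|C\cap D|<w\ll w_0$. Since the rows of $W_0$ are pairwise linked by its columns, a separation of order less than $w$ can cut off only a bounded number of rows of $W_0$. Pigeonholing over the contiguous rows and columns of $W_0$, one selects a compact $w$-subwall $W\subseteq W_0$ such that every row of $W$ avoids $C\cap D$ (and hence lies in $G[D]$) for every relevant separation; the third tangle axiom applied to $W$'s own connectivity then confirms that $\mct$ dominates $W$.

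The quantitative value of $f_{\ref{thm:tanglewall}}$ arises from the bound in the Excluded Grid Theorem combined with the linear blow-up needed for the pigeonhole step. The hardest step is, as expected, the Excluded Grid Theorem itself, which is invoked as a black box; by contrast, the upgrade from a large wall to a tangle-dominated wall is a relatively short counting argument using only the three tangle axioms. This is essentially the approach carried out by Robertson, Seymour, and Thomas in \cite{RST1994}.
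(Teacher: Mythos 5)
The theorem is cited from \cite{RST1994} without proof, so the only question is whether your sketch is itself correct. The high-level plan (tangle of large order $\Rightarrow$ large branch-width/treewidth $\Rightarrow$ large wall via the Excluded Grid Theorem $\Rightarrow$ extract a $\mct$-dominated subwall) is the right skeleton, and the first reduction is fine. The gap is in the ``upgrade'' step, and it is a real one.

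You consider a separation $(C,D)\in\mct$ of order $<w$ with $(D,C)\in\mct_{W_0}$, note that $C\cap D$ meets few rows and columns of $W_0$, and assert that a compact $w$-subwall $W$ avoiding $C\cap D$ therefore ``lies in $G[D]$.'' This is not justified, and in fact the opposite happens. Since $|C\cap D|<w\ll w_0$, at least one row $R'$ of $W_0$ avoids $C\cap D$, so $R'$ is entirely contained in one of $C\setminus D$ or $D\setminus C$. It cannot be $D\setminus C$: that would make $G[D]$ contain a row of $W_0$, contradicting $(D,C)\in\mct_{W_0}$. So $R'$, and with it the connected ``main body'' of $W_0$ that avoids $C\cap D$, sits inside $C\setminus D$ --- the $\mct$-\emph{small} side. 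Every subwall you can carve out of $W_0$ by avoiding $C\cap D$ lives there, and $\mct$ will not dominate any of them. A concrete instance: let $G$ be a large wall $W_0$ and a large clique $K_n$ sharing a single vertex, and let $\mct=\mct_{K_n}$. The Excluded Grid Theorem applied to $G$ may return $W_0$, but no subwall of $W_0$ is dominated by $\mct$ (the order-$1$ separation into wall side and clique side already witnesses this). The theorem is still true here because a suitable wall exists \emph{inside} $K_n$, but your procedure never looks there.

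So the missing idea is precisely how to force the wall onto the $\mct$-large side. Simply applying the grid theorem to $G$ and then pigeonholing on rows/columns of whatever wall it returns does not accomplish this, and ``the third tangle axiom applied to $W$'s own connectivity'' is too vague to fill the hole. One either needs a version of the grid theorem that constructs the wall \emph{inside} a given tangle (which is essentially what \cite{RST1994} proves, by working directly with the tangle rather than appealing to an unrooted wall), or a genuine iteration argument (restrict to the $\mct$-large side of a distinguishing separation and repeat) together with a termination measure --- neither of which is supplied. There is also a secondary issue that you speak of dodging ``every relevant separation'' by pigeonhole, but there can be very many such separations and they need not be avoidable simultaneously; however, this is moot given the direction problem above.
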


Let $\mct$ be a tangle of order $t\geq 3$ in a $\G$-labelled graph $(G,\g)$, and let $X$ be a subset of $V(G)$ with $|X|\leq t-3$. It is shown in \cite[Lemma 2.9]{thomas2023packingCycles} that there is a unique 3-block $(B,\g_B)$ of $(G-X,\g)$ such that $V(B)\cup X$ is not contained in any $\mct$-small side; we call $(B,\g_B)$ the \emph{$\mct$-large} 3-block of $(G-X,\g)$.

Let $W$ be a wall of order $w\geq 3$ in a $\Gamma$-labelled graph $(G,\g)$. Then $\branch(W)$ is 2-inseparable and, by the definition of $\mct_W$, $\branch(W)$ is not contained in any $\mct_W$-small side. In particular, there is no separation of order 2 separating $\branch(W)$ from the vertex set $V(B)$ of the the $\mct_W$-large 3-block $(B,\g_B)$ of $(G,\g)$, hence $\branch(W)\subseteq V(B)$. Moreover, if $W'$ is a 1-contained subwall of $W$, then $N^{W'}\subseteq \branch(W)$, so we have $N^{W'}\subseteq V(B)$.

Let $\mcf$ be a family of ($\G$-labelled) $A$-paths and let $(G,\g)$ be a $\G$-labelled graph.
For a function $f:\mbn\to\mbn$ and positive integer $k$, we say that $((G,\g),k)$ is a \emph{minimal counterexample to $f$ being an Erd\H{o}s-P\'osa function for $\mcf$} if $(G,\g)$ does not contain $k$ disjoint $A$-paths in $\mcf$ and there does not exist a set of at most $f(k)$ vertices intersecting every $A$-path in $\mcf$ in $G$, and subject to this, $k$ is minimum.
Standard techniques (e.g., \cite{bruhn2022packing, huynh2019unified, robertson1986graph, Thomassen1988}) show that such minimal counterexamples admit a large order tangle $\mct$ such that no member of $\mcf$ is contained in a $\mct$-small side.
We use the following lemma which was proved in \cite{bruhn2022packing} for the family of $A$-paths of length 0 modulo 4, but its proof applies verbatim to an arbitrary family $\mcf$ of $A$-paths.

\begin{lemma}
    [Lemma 8 in \cite{bruhn2022packing}] \label{lem:minctextangle}
    Let $\G$ be an abelian group and let $\mcf$ be a family of $\G$-labelled $A$-paths.
    Let $\theta$ be a positive integer and let $f:\mbn\to\mbn$ be a function such that $f(k)\geq 2f(k-1)+3\theta+10$.
    Suppose that $((G,\g),k)$ is a minimal counterexample to $f$ being an Erd\H{o}s-P\'osa function for $\mcf$. Then $G-A$ admits a tangle $\mct$ of order $\theta$ such that for each $(C,D)\in\mct$, $G[A\cup C]$ does not contain an $A$-path in $\mcf$.
\end{lemma}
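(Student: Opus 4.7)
The plan is to define $\mct$ as the set of all separations $(C,D)$ of $G-A$ of order strictly less than $\theta$ such that $G[A\cup C]$ contains no $A$-path in $\mcf$, and then to verify the two tangle axioms together with the additional ``large-side'' property in the statement. Each verification will proceed by contradiction, combining small hitting sets on sub-instances with the separator $C\cap D$ and invoking the minimality of $k$. The key observation that I would use repeatedly is that, since $(C,D)$ is a separation of $G-A$ (so there are no edges of $G-A$ between $C\setminus D$ and $D\setminus C$), any $A$-path in $\mcf$ in $G$ whose internal vertices all avoid $C\cap D$ must be entirely contained in $G[A\cup C]$ or in $G[A\cup D]$.

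First, I would verify that $\mct$ orients every separation $(C,D)$ of $G-A$ of order less than $\theta$, i.e., that exactly one of $(C,D)$ and $(D,C)$ lies in $\mct$. If neither $G[A\cup C]$ nor $G[A\cup D]$ contains an $A$-path in $\mcf$, then by the key observation $C\cap D$ itself hits every $A$-path in $\mcf$ in $G$, yielding a hitting set of size less than $\theta<f(k)$, contradicting the counterexample hypothesis. If both sides contain an $A$-path in $\mcf$, then I would apply the minimality of $k$ to $G[A\cup C]$ and $G[A\cup D]$ at a parameter strictly less than $k$ to extract hitting sets of size at most $f(k-1)$ on each side, whose union with $C\cap D$ is a hitting set for $G$ of size at most $2f(k-1)+\theta-1<f(k)$, again a contradiction.

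Next, I would verify the triple axiom. Suppose for contradiction that $(C_i,D_i)\in\mct$ for $i\in[3]$ and $(G-A)[C_1]\cup(G-A)[C_2]\cup(G-A)[C_3]=G-A$; let $X=\bigcup_{i=1}^{3}(C_i\cap D_i)$, so that $|X|<3\theta$. For any $A$-path $P$ in $\mcf$ that avoids $X$, the key observation together with $(C_i,D_i)\in\mct$ forces the internal vertices of $P$ to lie in $D_i\setminus C_i$ for each $i$, and hence in $\bigcap_i(D_i\setminus C_i)=V(G-A)\setminus\bigcup_i C_i=\emptyset$. Thus $P$ has no internal vertex, so $P$ is a single edge between two vertices of $A$, contained in $G[A\cup C_i]$ for each $i$, contradicting $(C_i,D_i)\in\mct$. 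Hence $X$ hits every $A$-path in $\mcf$ in $G$, giving a hitting set of size less than $3\theta\leq f(k)-2f(k-1)-10<f(k)$, another contradiction.

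Finally, for the large-side property, suppose $(C,D)\in\mct$ but $G[A\cup(D-C)]$ contains no $A$-path in $\mcf$. Then every $A$-path in $\mcf$ in $G$ must intersect $C\cap D$: either it uses a vertex of $C\cap D$ internally, or by the key observation it lies in $G[A\cup C]$ (impossible by $(C,D)\in\mct$) or in $G[A\cup D]$, in which case its internal vertices would lie entirely in $D-C$, contradicting the assumption. Hence $C\cap D$ is a hitting set of size less than $\theta<f(k)$, a contradiction. The main technical obstacle I anticipate is the exact handling of the minimality argument in the orientation step, particularly the delicate edge case where one side of the separation already admits a maximum packing of size $k-1$ so that minimality at $k$ itself is not directly available, together with the separate treatment of $A$-paths consisting of a single edge between two vertices of $A$ that drives the triple-axiom argument.
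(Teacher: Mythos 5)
The paper does not prove this lemma itself: it cites Lemma~8 of Bruhn--Ulmer \cite{bruhn2022packing} and asserts that proof carries over verbatim to a general family $\mcf$. So I can only assess your attempt on its own terms.

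Your overall structure (define $\mct$ as the separations of $G-A$ of order below $\theta$ whose $C$-side spans no $\mcf$-path, then verify the two tangle axioms and the large-side property) is the right one, and your ``neither side has an $\mcf$-path'' subcase, your triple-axiom argument, and your large-side argument are all correct as sketched.

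The gap you flagged in the orientation step is, however, genuine and does not disappear with more care along the lines you describe. In the ``both sides contain an $\mcf$-path'' case you want to extract $f(k-1)$-hitting sets from $G[A\cup C]$ and $G[A\cup D]$ by minimality of $k$, but this requires those two subgraphs to lack $k-1$ disjoint $\mcf$-paths, which is simply not guaranteed: $G[A\cup C]$ may well contain a packing $P_1,\dots,P_{k-1}$ of $\mcf$-paths, and an $\mcf$-path $Q$ in $G[A\cup D]$ can intersect every $P_i$ in $A\cup(C\cap D)$, so you get no contradiction with $G$ having no $k$ disjoint $\mcf$-paths and hence no hitting set from minimality.

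The repair is to apply minimality not to a side of the separation but to $G$ with a single path from the opposite side deleted. Fix $\mcf$-paths $P\subseteq G[A\cup C]$ and $Q\subseteq G[A\cup D]$, with endpoints $p_1,p_2$ and $q_1,q_2$ respectively, and put $X=C\cap D$. Since $G$ has no $k$ disjoint $\mcf$-paths, $G-V(Q)$ has no $k-1$ disjoint ones, so by minimality of $k$ there is a hitting set $Y$ in $G-V(Q)$ with $|Y|\le f(k-1)$. Any $\mcf$-path $R\subseteq G[A\cup C]$ meeting $V(Q)$ meets $(A\cup C)\cap V(Q)\subseteq\{q_1,q_2\}\cup X$ (since the interior of $Q$ lies in $D$), and any $R\subseteq G[A\cup C]$ disjoint from $V(Q)$ lies in $G-V(Q)$ and is hit by $Y$; hence $Y\cup\{q_1,q_2\}\cup X$ hits every $\mcf$-path in $G[A\cup C]$. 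Symmetrically, $Y'\cup\{p_1,p_2\}\cup X$ (with $Y'$ from $G-V(P)$) hits every $\mcf$-path in $G[A\cup D]$. By your key observation every $\mcf$-path avoiding $X$ internally lies in one of the two sides, so $X\cup Y\cup Y'\cup\{p_1,p_2,q_1,q_2\}$, of size at most $2f(k-1)+\theta+3<f(k)$, hits all of $\mcf$, contradicting that $(G,\g)$ has no $f(k)$-hitting set. (One needs $k\ge 2$ to invoke $f(k-1)$, which holds because for $k=1$ the empty set is a hitting set of size $\le f(1)$.)
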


\section{Obstructions} \label{sec:obstructions}

In this section we describe the obstructions to the \ep property of $A$-paths in group-labelled graphs. The obstructions consist of a wall and certain handlebars attached to the wall; they are adapted from similar constructions for cycles used in \cite{gollin2024unified, gollin2022unified}. We begin by summarizing these obstructions for cycles, which we call here \emph{ribboned walls}.

\subsection{Obstructions for cycles} \label{subsec:obscycles}
Let $k$ be a positive integer. A \emph{$k$-ribboned wall} is a tuple $\mcr=(W,(\mcp_i:i\in[m]))$ consisting of a wall $W$ of order at least $km$ and a family $(\mathcal{P}_i \colon i \in [m])$ of pairwise disjoint non-mixing $W$-handlebars each of size at least $k$. We write $G(\mcr):=W\cup \mcp_1\cup\dots\cup\mcp_m$.
A cycle $C$ in $G(\mcr)$ is \emph{minimally allowable} (with respect to $\mcr$) if $C$ contains exactly one path in $\mcp_i$ for each $i\in[m]$.

It is shown implicitly in the proofs of {\cite[Theorem 1.1]{gollin2024unified}} and {\cite[Theorem 3.3]{gollin2022unified}} that a $k$-ribboned wall contains a half-integral packing of $k$ minimally allowable cycles:
\begin{lemma}
[{\cite[Theorem 1.1]{gollin2024unified}}, {\cite[Theorem 3.3]{gollin2022unified}}] \label{lem:cyclehalfpacking}
    Let $\mcr$ be a $k$-ribboned wall.
    Then $G(\mcr)$ contains a half-integral packing of $k$ minimally allowable cycles.
\end{lemma}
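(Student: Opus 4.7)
The result is essentially implicit in the proofs of \cite[Theorem 1.1]{gollin2024unified} and \cite[Theorem 3.3]{gollin2022unified}, where $k$-ribboned walls are used to produce half-integral packings of $k$ cycles satisfying specified allowability conditions. My plan is to unpack the relevant piece of those arguments and verify that the cycles produced are exactly minimally allowable, that is, each uses precisely one path from each $\mcp_i$.

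First I would fix, for each $i\in[m]$, a choice of $k$ pairwise disjoint paths $P_{i,1},\dots,P_{i,k}$ in $\mcp_i$, which exists since $|\mcp_i|\geq k$ and the paths in $\mcp_i$ are already pairwise disjoint. By the non-mixing assumption, the attachment regions $U_1,\dots,U_m$ of the handlebars on the column-boundary of $W$ are pairwise disjoint, so the $km$ chosen handlebar paths meet only inside their respective $U_i$'s. For each $j\in[k]$, I would construct a cycle $C_j$ by concatenating $P_{1,j},P_{2,j},\dots,P_{m,j}$ in some cyclic order, joining consecutive handlebar paths via routes in $W$. Since $W$ has order at least $km$, I can partition its rows into $k$ horizontal strips of $m$ rows each and assign the $j$-th strip to $C_j$, so that the wall connections used by distinct cycles are vertex-disjoint outside the column-boundary.

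The main difficulty is that the endpoints of $P_{i,1},\dots,P_{i,k}$ inside $U_i$ are interleaved differently depending on whether $\mcp_i$ is in series, nested, or crossing; this determines which pairing of endpoints across handlebars yields $k$ single cycles as opposed to a smaller number of longer cycles or multiple disjoint ones, and must be chosen carefully. Following \cite{gollin2024unified,gollin2022unified}, this is handled by selecting the cyclic order in which the handlebars are visited according to their types and by allowing a controlled local doubling within each $U_i$, where vertices on the column-boundary may lie on two cycles but never on three. The net effect is a multiset $\{C_1,\dots,C_k\}$ of cycles, each containing exactly one path from each $\mcp_i$ and hence minimally allowable, with each vertex of $G(\mcr)$ belonging to at most two of them, giving the desired half-integral packing.
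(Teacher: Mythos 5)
Your overall plan — pick $k$ disjoint handles from each $\mcp_i$, route them through $W$, and accept controlled vertex-sharing — matches the spirit of the constructions in \cite{gollin2024unified,gollin2022unified}, which the paper itself only cites without proof. However, the mechanism you describe for the half-integrality is not correct, and the intermediate step built on it would fail.

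Specifically, you propose to partition the rows of $W$ into $k$ horizontal strips of $m$ rows and assign strip $j$ to the cycle $C_j$, which would make the wall-connections of distinct cycles vertex-disjoint outside the column-boundary; you then locate the doubling ``within each $U_i$'' on the column-boundary. Both claims go wrong precisely in the case that forces half-integrality. When some $\mcp_i$ is a crossing handlebar, the $k$ chosen handles have their endpoint pairs $(a_1,b_1),\dots,(a_k,b_k)$ pairwise crossing in $\prec_W$. A wall is planar (topologically a disk with $\partial N^W$ on its boundary), and $k$ pairwise crossing chords of a disk cannot be realized by disjoint routes: any system of paths inside the wall realizing these pairings must have pairwise crossings, so you cannot confine cycle $C_j$ to its own horizontal strip. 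The vertex-sharing therefore occurs at those forced crossing points in the interior of $W$, not on the column-boundary. On the column-boundary the handles attach at distinct vertices (they are disjoint paths), and cycles using disjoint handles need not share any attachment vertex at all, so there is nothing to double there. The actual argument in \cite{gollin2024unified,gollin2022unified} is organized around this planarity obstruction: it routes the cycles through the wall so that each pair of cycle routes crosses in a controlled way (essentially once, or an even number of times can be eliminated), which yields at most two cycles through any internal wall vertex, hence a half-integral packing. If instead all handlebars are in series, or the crossing ones pair up appropriately (the negation of \ref{item:obstructions-handlebars}), the crossings can be avoided entirely and one even gets an integral packing, which is Lemma~\ref{lem:cyclepacking}. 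So the gap is a concrete misattribution of where the half-integrality arises, together with a strip-assignment step that fails for crossing handlebars; fixing it requires replacing the strip decomposition with the crossing-route argument from the cited papers.
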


A $k$-ribboned wall $\mcr=(W,(\mcp_i:i\in[m]))$ may not contain a large (integral) packing of minimally allowable cycles.
Let us say that $\mcr$ is an \emph{obstruction} (c.f.~\cite[Definition 3.2(O6)]{gollin2024unified}) if
\begin{enumerate}[label=(O)]
    \item \label{item:obstructions-handlebars}
        at least one of the following properties holds: 
        \begin{itemize}
            \item \label{subitem:obstructions-oddcrossing} 
            The number of crossing $W$-handlebars in~${( \mathcal{P}_i \colon i \in [m] )}$ is odd. 
            \item \label{subitem:obstructions-seriesnonseries} At least one but not all $W$-handlebars in~${( \mathcal{P}_i \colon i \in [m] )}$ are in series. 
            \item \label{subitem:obstructions-3series} At least three $W$-handlebars in~${( \mathcal{P}_i \colon i \in [m] )}$ are in series. 
        \end{itemize}
\end{enumerate}

It is shown in {\cite[Proposition 8.1]{gollin2022unified}} that an obstruction does not contain three disjoint minimally allowable cycles (see \cite[Figure 1]{gollin2022unified}). In fact, its proof shows the following:
\begin{lemma}[{\cite[Proposition 8.1]{gollin2022unified}}] \label{lem:cyclenopacking}
    Let $\mcr=(W,(\mcp_i:i\in[m]))$ be an obstruction. 
    Then there do not exist three disjoint cycles $C_1,C_2,C_3$ in $G(\mcr)$ such that for each $j\in[3]$ and $i\in[m]$,
    \begin{enumerate}
        \item $C_j$ contains at least one path in $\mcp_i$, and
        \item if $\mcp_i$ is nested or crossing, then $C_j$ contains an odd number of paths in $\mcp_i$.
    \end{enumerate}
\end{lemma}
Conversely, if $\mcr$ is not an obstruction (that is, if none of $(\mcp_i:i\in[m])$ are in series and the number of crossing $W$-handlebars is even, or if $m\leq 2$ and all $W$-handlebars are in series), then $G(\mcr)$ contains a large packing of minimally allowable cycles:
\begin{lemma}[{\cite[Lemmas 5.3 and 5.4]{gollin2022unified}}] \label{lem:cyclepacking}
    Let $\mcr$ be a $k$-ribboned wall that does not satisfy \ref{item:obstructions-handlebars}. Then $G(\mcr)$ contains a packing of $k$ minimally allowable cycles.
\end{lemma}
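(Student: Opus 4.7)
The plan is a case analysis driven by the failure of \ref{item:obstructions-handlebars}. Let $\mcr = (W, (\mcp_i : i \in [m]))$. Failing \ref{item:obstructions-handlebars} leaves two regimes: (II) all $\mcp_i$ are in series and $m \leq 2$, or (I) no $\mcp_i$ is in series and an even number of them are crossing (the rest nested). In either regime we produce, for each $j \in [k]$, a cycle $C_j$ that contains exactly one handle from every $\mcp_i$, so that $C_j$ is minimally allowable, and such that $C_1, \ldots, C_k$ are pairwise disjoint.

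Case (II) is direct. For $m \leq 1$ the $k$ required cycles are built inside $W \cup \mcp_1$ using the series property: the endpoint pairs of the $k$ handles of $\mcp_1$ are pairwise in series along $\prec_W$, so closing each selected handle through the short arc of $\partial W$ lying inside its own endpoint interval yields $k$ disjoint cycles. For $m=2$, non-mixing places $\mcp_1$ and $\mcp_2$ on disjoint portions of the column-boundary; pairing the $j$-th handle of $\mcp_1$ with the $j$-th handle of $\mcp_2$ and closing via two disjoint row-subpaths of $W$ produces $C_j$, and since $W$ has order at least $2k$, the row pairs can be chosen so that $C_1, \ldots, C_k$ are pairwise disjoint.

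Case (I) is the substantive one. Order the handlebars $\mcp_1, \ldots, \mcp_m$ along $\prec_W$ by the positions of their attachment regions on the column-boundary, and for each $j \in [k]$ pick the $j$-th handle $P_i^j$ of $\mcp_i$ in the natural ranking of its endpoints. Build $C_j$ by concatenating $P_1^j, P_2^j, \ldots, P_m^j$ with connector arcs in $W$, where each connector is a subpath of a row or of a boundary column and all connectors for $C_j$ are routed in a ``depth $j$'' horizontal strip. Because $W$ has order at least $km$, $k$ such strips can be chosen pairwise disjoint, and combined with the non-mixing of the $\mcp_i$, this forces $C_1, \ldots, C_k$ to be disjoint.

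The crux is closing each $C_j$ into an actual cycle. Tracing $C_j$ cyclically, crossing a nested handlebar preserves the orientation of the running tangent along the column-boundary, while crossing a crossing handlebar reverses it. Closing up therefore requires an even number of reversals, which is exactly the parity hypothesis; this is what lets us match which endpoint of $P_i^j$ to enter and which to exit, so that the connector arcs can be oriented consistently on every $C_j$. This parity/closing argument is the main obstacle: an odd number of crossing handlebars would force some $C_j$ to end with reversed orientation and fail to close, and a mixture of series with non-series handlebars would spoil the coordinated routing across $j \in [k]$, matching precisely the configurations excluded by \ref{item:obstructions-handlebars}.
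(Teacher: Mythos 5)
The paper does not prove this lemma: it is imported verbatim from Lemmas~5.3 and~5.4 of \cite{gollin2022unified} (the citation in the statement), so there is no internal argument to compare your sketch against. Judged on its own, your case split is the correct unpacking of the negation of \ref{item:obstructions-handlebars} -- either all $\mcp_i$ are in series and $m\leq 2$, or none is in series and the number of crossing handlebars is even -- and the intuition that a crossing handlebar ``flips'' while a nested one does not is the right informal picture behind why an even crossing count lets each transversal close into a cycle.

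That said, what you have is an outline, not a proof, and it stops at exactly the point where the content of Lemmas~5.3/5.4 of \cite{gollin2022unified} lives. ``Pair the $j$-th handle of $\mcp_1$ with the $j$-th handle of $\mcp_2$'' and ``route all connectors for $C_j$ in a depth-$j$ strip'' both presuppose a single consistent indexing of the $j$-th handle across all $m$ handlebars, and also presuppose that non-mixing fixes the cyclic arrangement of the attachment intervals; neither is automatic. Non-mixing only says the sets $U_i$ are pairwise disjoint -- each $U_i$ is in general a union of two disjoint boundary intervals, and these can interleave around $\partial W$ -- and a crossing handlebar transposes the endpoint order between its two intervals, so the ``$j$-th'' endpoint on one side corresponds to the ``$(k+1-j)$-th'' on the other. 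Reconciling these index reversals as you concatenate around all $m$ handlebars is precisely where the even-crossing hypothesis is consumed, and making that, together with the disjointness of the $k$ connector systems, explicit is the actual work of the lemma. Finally, for completeness, the degenerate case $m=0$ is not covered by ``build $k$ cycles inside $W\cup\mcp_1$'' (there is no $\mcp_1$); that case needs its own sentence and, as you have stated things, would require the wall order to be at least about $2k$ rather than $km=0$.
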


\subsection{Obstructions for \texorpdfstring{$A$}{A}-paths} \label{subsec:obsApaths}
Let $\G$ be a finite abelian group and let $A$ be a vertex set. A \emph{$\G$-labelled $(A,k)$-ribboned $\theta$-wall} is a tuple $\mcr = (W,(\mcp_i:i\in[m]),\mcq_1,\mcq_2,\g)$ consisting of a $\theta$-wall $W$ with $\theta\geq k(m+2)$, a family $(\mcp_i:i\in[m])$ of $W$-handlebars, $A$-$W$-handlebars $\mcq_1,\mcq_2$, and a $\G$-labelling $\g$ of the graph $G(\mcr):=W\cup \mcp_1\cup\dots\cup\mcp_m\cup\mcq_1\cup\mcq_2$ such that
\begin{enumerate}
        [label=(A\arabic*)]
        \item \label{item:Aobstructions-union}
        $\mcq_1$ and $\mcq_2$ are either disjoint or equal, and
        $\mcp_1,\dots,\mcp_m,\mcq_1\cup \mcq_2$ are pairwise disjoint and non-mixing with respect to $W$; moreover, we have $|\mcp_i|\geq k$ for all $i\in[m]$ and $|\mcq_j|\geq 2k$ for all $j\in[2]$, and
        \item \label{item:Aobstructions-zerowall}
            $W$ is strongly balanced and there exist $g_1,\dots,g_m,h_1,h_2\in\G$ such that every path in $\mcp_i$ has $\g$-length $g_i$ for all $i\in[m]$ and every path in $\mcq_j$ has $\g$-length $h_j$ for all $j\in[2]$.
\end{enumerate}
We may omit the phrase ``$\G$-labelled'' when $\G$ is clear from context. We may also omit the parameter $\theta$ and simply call $\mcr$ an \emph{$(A,k)$-ribboned wall}, but it is always assumed that the order of the wall $W$ is at least $k(m+2)$. We call $G(\mcr)$ the \emph{graph of $\mcr$}.

Let $\Lambda\subseteq\G$. 
We say that an $(A,k)$-ribboned wall $\mcr=(W,(\mcp_i:i\in[m]),\mcq_1,\mcq_2,\g)$ is \emph{$\Lambda$-irreducible} if, in addition to \ref{item:Aobstructions-union} and \ref{item:Aobstructions-zerowall}, the following hold:
    \begin{enumerate}
        [label=(A\arabic*)]
        \setcounter{enumi}{2}
        \item \label{item:Aobstructions-allowabletransversals}$(\sum_{i\in[m]}g_i)+h_1+h_2 \in \Lambda$, 
        \item \label{item:Aobstructions-minimality}
            for each~${i \in [m]}$ and $b_1,b_2\in\{0,1,2\}$ such that $b_1+b_2=2$, we have~$(\gen{g_j:j\in[m]\setminus\{i\}}+b_1h_1+b_2h_2) \cap \Lambda = \emptyset$, and
        \item \label{item:Aobstructions-even}
            for all $b_1,b_2\in\{0,1,2\}$ such that $b_1+b_2=2$,
            if $(\sum_{i \in [m]} c_ig_i)+b_1h_1+b_2h_2 \in \Lambda$ for some $c_1,\dots,c_m\in\mathbb{Z}$, then for each ${i \in [m]}$, either
            $\mathcal{P}_i$ is in series
            or~$c_i$ is odd.
    \end{enumerate}    
    We say that an $A$-path in $G(\mcr)$ is \emph{minimally allowable (with respect to $\mcr$)} if it contains
    \begin{itemize}
        \item exactly one path in $\mcp_i$ for each $i\in[m]$,
        \item exactly one path in $\mcq_1$ and in $\mcq_2$  if $\mcq_1$ and $\mcq_2$ are disjoint, and 
        \item exactly two paths in $\mcq_1$ if $\mcq_1=\mcq_2$.
    \end{itemize}
    It follows from \ref{item:Aobstructions-allowabletransversals} that minimally allowable $A$-paths are $\Lambda$-allowable.
    On the other hand, if $R$ is a $\Lambda$-allowable $A$-path in $G(\mcr)$, then $R$ contains at least one path in each $\mcp_i$ by \ref{item:Aobstructions-minimality} and, moreover, if $\mcp_i$ is nested or crossing, then $R$ contains an odd number of paths in $\mcp_i$ by \ref{item:Aobstructions-even}.

    A \emph{$\Lambda$-obstruction} is a $\Lambda$-irreducible  $(A,k)$-ribboned wall $(W,(\mcp_i:i\in[m]),\mcq_1,\mcq_2,\g)$ (satisfying \ref{item:Aobstructions-union}-\ref{item:Aobstructions-even}) such that   
    \begin{enumerate}
        [label=(A\arabic*)]
        \setcounter{enumi}{5}
        \item \label{item:Aobstructions-handlebars}
            at least one $W$-handlebar in~${( \mathcal{P}_i \colon i \in [m] )}$ is in series, and
        \item \label{item:Aobstructions-samesideApath}
            if $m=1$, then for each $j\in[2]$, we have $(\langle g_1\rangle+2h_j)\cap\Lambda=\emptyset$.
    \end{enumerate}

    We are ready to state our main structure theorem which says that, if we cannot find a large packing of $\Lambda$-allowable $A$-paths nor a small vertex set intersecting all $\Lambda$-allowable $A$-paths, then we can find a $\Lambda$-irreducible $(A,k)$-ribboned wall.
    \begin{restatable}{theorem}{mainApathstructure}
\label{thm:mainApathstructure}
        Let $\G$ be a finite abelian group and let $\Lambda\subseteq \G$. Then there exists a function $f:\mbn^2\to\mbn$ such that for every positive integer $k$ and $\G$-labelled graph $(G,\g)$ with $A\subseteq V(G)$, either
        \begin{enumerate}
            \item $(G,\g)$ contains $k$ disjoint $\Lambda$-allowable $A$-paths,
            \item there is a vertex set of size at most $f(k,|\G|)$ intersecting every $\Lambda$-allowable $A$-path, or
            \item $(G,\g)$ contains a $\Lambda$-irreducible $(A,k)$-ribboned wall.
        \end{enumerate}
    \end{restatable}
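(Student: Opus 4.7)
The plan is to run a minimal-counterexample argument, extract structure through tangle theory, and then refine the structure until it is $\Lambda$-irreducible. Suppose $((G,\g),k)$ is a minimal counterexample to the recursive function $f(k,|\G|) \geq 2f(k-1,|\G|) + 3\theta + 10$ being a hitting function for the family of $\Lambda$-allowable $A$-paths, where $\theta$ is chosen sufficiently large (depending on $k$ and $|\G|$). By Lemma \ref{lem:minctextangle}, $G - A$ admits a tangle $\mct$ of order $\theta$ every large side of which sees an $A$-path in $\mcf$. Choosing $\theta \geq f_{\ref{thm:tanglewall}}(w_{\ref{lem:balancedsubwall}}(w,|\G|))$ for a large enough $w$, Theorem \ref{thm:tanglewall} and Lemma \ref{lem:balancedsubwall} together produce a strongly balanced wall $W$ of order at least $w$ in $G - A$ that is dominated by $\mct$.

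Next I would build the two $A$-$W$-handlebars $\mcq_1,\mcq_2$. By the tangle property every $\mct$-large side contains an $A$-path, so after repeatedly peeling off disjoint $A$-$W$-paths (using small separators to update the tangle and the wall, while passing to a subwall) one obtains many pairwise disjoint $A$-$W$-paths landing on the column-boundary of a subwall. A Menger/linkage argument on $A$ (splitting into whether one or two $A$-vertices absorb most paths) produces either one $A$-$W$-handlebar of size $2k$ attached at a single vertex of $A$, or two disjoint non-mixing $A$-$W$-handlebars of size $k$; by Lemma \ref{lem:handlebarsnonmixing} we may assume they are non-mixing with respect to $W$. Pigeonholing on $\g$-lengths over the finite group $\G$ lets me pass to sub-handlebars of uniform $\g$-length $h_1,h_2$. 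This yields \ref{item:Aobstructions-union} and the $h_j$ part of \ref{item:Aobstructions-zerowall}.

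To produce the $W$-handlebars $(\mcp_i)_{i\in[m]}$, apply Lemma \ref{lem:wollanApath} to $\g$-nonzero $\branch(W)$-paths in $G$ (restricted to a large side of a refined tangle): either we obtain many such paths or a small hitting set, which together with a small side-boundary kills all $A$-paths of $\mcf$ contradicting minimality. Hence many disjoint $\g$-nonzero $\branch(W)$-paths exist, and Lemma \ref{lem:addlinkage} inserts them as $W$-handles preserving the previously built $\mcq_1,\mcq_2$ on a further subwall. After pigeonholing $\g$-lengths of handles over $\G$, Lemma \ref{lem:handlebarsnonmixing} again produces pairwise non-mixing $W$-handlebars; iterating this insertion once per coset of $\gen{g_1,\dots,g_j}$ and using $|\G|$ as a potential that strictly drops, we obtain handlebars $\mcp_1,\dots,\mcp_m$ of sufficient size with uniform $\g$-lengths $g_1,\dots,g_m$, giving \ref{item:Aobstructions-zerowall}. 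I stop as soon as \ref{item:Aobstructions-allowabletransversals} becomes achievable, i.e.~some combination $(\sum g_i) + h_1 + h_2$ lies in $\Lambda$; if none ever does, the union of all paths in all handlebars plus $A$ has bounded ``$\Lambda$-reachability'', which yields a small hitting set, contradicting the counterexample.

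The main obstacle is to simultaneously enforce the irreducibility conditions \ref{item:Aobstructions-minimality} and \ref{item:Aobstructions-even}. For \ref{item:Aobstructions-minimality}, I would repeatedly discard any $\mcp_i$ that can be removed without breaking \ref{item:Aobstructions-allowabletransversals} (as witnessed by $\langle g_j: j\neq i\rangle + b_1h_1 + b_2h_2$ meeting $\Lambda$); since $m$ strictly decreases, this terminates. For \ref{item:Aobstructions-even}, whenever $\mcp_i$ is nested or crossing and some combination with an even $c_i$ lies in $\Lambda$, apply Lemma \ref{lem:combining-handles} with $d_i=2$ (which produces a series handlebar by that lemma's ``even $d_i$'' clause), effectively converting the ``bad'' parity into a series handlebar; this only increases the number of series handlebars, a finite process. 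Both reductions may change $(g_i)_{i\in[m]}$, so the entire construction is wrapped in one outer loop whose progress is measured by the lexicographic pair (number of $\mcp_i$'s that are nested/crossing with an even defect, $|\G| - |\text{subgroup generated}|$), which is well-founded. Once the loop stabilizes the result satisfies \ref{item:Aobstructions-union}--\ref{item:Aobstructions-even}, yielding a $\Lambda$-irreducible $(A,k)$-ribboned wall as required (noting that conditions \ref{item:Aobstructions-handlebars} and \ref{item:Aobstructions-samesideApath} of obstruction-hood are not required for the conclusion of the theorem, only irreducibility).
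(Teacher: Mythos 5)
The proposal uses the right toolkit (minimal counterexample, tangle, wall, Lemmas~\ref{lem:wollanApath}, \ref{lem:addlinkage}, \ref{lem:handlebarsnonmixing}, \ref{lem:combining-handles}), but builds the structure in the wrong order, and that misordering creates a genuine gap that the vague fallback does not close. You construct the $A$-$W$-handlebars $\mcq_1,\mcq_2$ first, fixing their $\g$-lengths $h_1,h_2$ by pigeonholing, and only afterwards generate the subgroup $\G'=\langle g_1,\dots,g_m\rangle$ from the $W$-handles. There is no reason why $(h_1+h_2+\G')\cap\Lambda\neq\emptyset$ should hold for these particular, arbitrarily-pigeonholed $h_1,h_2$. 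Your escape clause --- ``if none ever does, the union of all paths in all handlebars plus $A$ has bounded $\Lambda$-reachability, which yields a small hitting set'' --- is not correct: a $\Lambda$-allowable $A$-path in $G$ can have $A$-$V(B)$-legs whose lengths lie in cosets of $\G'$ completely different from $h_1+\G'$ and $h_2+\G'$, so the failure of your chosen $h_1,h_2$ to combine with $\G'$ says nothing about such paths, and does not produce a hitting set.

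The paper's proof resolves exactly this point by reversing the order and inserting a nontrivial coset-selection argument. First Lemma~\ref{lem:generatinglinkagessubwall} (iterating Lemma~\ref{lem:addlinkage} with Lemma~\ref{lem:wollanApath} applied in $\G/\G'$) produces a $\G'$-generating ribboned wall \emph{together with} a deletion set $Z$ that makes the $\mct_{W'}$-large $3$-block $\G'$-balanced --- this last side-condition is what pins down $\G'$ as ``the group of achievable leg-transfers.'' Only then are the $A$-$W$-handlebars built, via Lemma~\ref{lem:H1H2linkages}, whose Claim~\ref{claim:H1+H2} is the step you are missing: after deleting, for each coset $H$ admitting a hitting set for $A$-$N^W$-paths of length in $H$, such a hitting set $X_H$, any surviving $\Lambda$-allowable $A$-path must traverse $B$, decomposing as two $A$-$V(B)$-legs of lengths in cosets $H_1,H_2$ with no small hitting set, plus a middle segment of length in $\G'$; hence $(H_1+H_2)\cap\Lambda\neq\emptyset$. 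A careful iteration then realizes \emph{both} $H_1$ and $H_2$ by actual $A$-$W$-linkages. Without this argument you cannot guarantee condition~\ref{item:Aobstructions-allowabletransversals}, and the rest of your reduction to \ref{item:Aobstructions-minimality} and \ref{item:Aobstructions-even} (which otherwise broadly mirrors Lemma~\ref{lem:makeirreducible}) never gets off the ground. A secondary, smaller issue: in your \ref{item:Aobstructions-even} reduction you fix $d_i=2$ in Lemma~\ref{lem:combining-handles}; but you must choose all the $d_j$'s simultaneously so that $h_1+h_2+\sum_j d_j g_j$ stays in $\Lambda$, which is how Lemma~\ref{lem:makeirreducible} proceeds.
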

    Theorem \ref{thm:mainApathstructure} is proved in Sections \ref{sec:lemmas} and \ref{sec:proof}.
    In the remainder of this section, we prove Theorem \ref{thm:mainep} assuming Theorem \ref{thm:mainApathstructure}. 
    \begin{proposition}
        \label{prop:noEPC}
        Let $\G$ be a finite abelian group and let $\Lambda\subseteq\G$. If $\Lambda$ does not satisfy the Erd\H{o}s-P\'osa condition, then there exists a $\Lambda$-obstruction and the family of $\Lambda$-allowable $A$-paths does not satisfy the Erd\H{o}s-P\'osa property. 
    \end{proposition}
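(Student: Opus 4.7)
The plan is to split into two cases according to which of the Erdős-Pósa conditions fails, in each case constructing an explicit $\Lambda$-obstruction on a sufficiently large wall and separately exhibiting a sequence of $\G$-labelled graphs witnessing the failure of the Erdős-Pósa property. The conditions \ref{item:Aobstructions-union}--\ref{item:Aobstructions-samesideApath} defining a $\Lambda$-obstruction are purely algebraic once the combinatorial skeleton (wall, handlebars, and their series/nested/crossing types) is fixed, so the construction is a matter of assigning labels and types correctly.

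Suppose \ref{item:ep1} fails with witnesses $a,b,c\in\G$. I would take a sufficiently large wall $W$ (depending on $k$) with every edge labelled~$0$, add $k$ external parallel edges labelled $c$ on disjoint pairs of column-$1$ vertices to form an in-series $W$-handlebar $\mcp_1$, attach $2k$ new $A$-vertices by $a$-labelled edges to a separate segment of column~$1$ to obtain $\mcq_1$, and attach $2k$ further $A$-vertices by $b$-labelled edges to the opposite boundary column to obtain $\mcq_2$. Setting $m=1$, $g_1=c$, $h_1=a$, $h_2=b$, the non-trivial verifications are \ref{item:Aobstructions-allowabletransversals} (which is $a+b+c\in\Lambda$), \ref{item:Aobstructions-minimality} (which uses $a+b\notin\Lambda$ together with $2a,2b\notin\Lambda$, the latter two following from the coset emptiness since $0\in\langle c\rangle$), and \ref{item:Aobstructions-samesideApath} (which is exactly the two coset hypotheses); \ref{item:Aobstructions-even} is vacuous as $\mcp_1$ is in series. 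If instead \ref{item:ep2} fails with witnesses $a,b,c\in\G$, I would make the analogous construction with $m=2$ and $\mcq_1=\mcq_2$: in-series handlebars $\mcp_1,\mcp_2$ of $k$ handles labelled $b$ and $c$ respectively on disjoint column-boundary segments, and a single $A$-$W$-handlebar of $2k$ $a$-labelled handles, with $g_1=b$, $g_2=c$, $h_1=h_2=a$. Here \ref{item:Aobstructions-minimality} becomes precisely $(2a+\langle b\rangle)\cap\Lambda = (2a+\langle c\rangle)\cap\Lambda = \emptyset$, and \ref{item:Aobstructions-samesideApath} is vacuous.

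For the failure of the Erdős-Pósa property, I would argue directly with the plane graphs of Figure~\ref{fig:obstructions}, enlarging the grid as needed. In Figure~\ref{fig:obs1}, every $\Lambda$-allowable $A$-path must have one endpoint on each side, because $2a+kc$ and $2b+kc$ are never in $\Lambda$, and must use at least one top $c$-edge because $a+b\notin\Lambda$. Since the $c$-edges lie on the outer face above the grid, any two disjoint left-right paths are nested in the plane, so only the outer one can reach the top row; hence no two $\Lambda$-allowable $A$-paths can be disjoint. On the other hand, a standard Menger-type argument shows that any hitting set has size at least the grid order, which can be made arbitrarily large. Thus for any candidate Erdős-Pósa function $f$, a sufficiently large grid yields a counterexample at $k=2$. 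The analogous argument with Figure~\ref{fig:obs2} handles the failure of \ref{item:ep2}.

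The main technical obstacle will be the planar-topology argument that only one of two disjoint left-to-right paths can reach the $c$-edges on the outer face, together with the Menger lower bound on the hitting set; the algebraic verifications of the $\Lambda$-obstruction conditions fall out directly from the respective failures of \ref{item:ep1} or \ref{item:ep2}.
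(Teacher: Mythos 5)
Your proposal is correct and follows essentially the same approach as the paper: in each case you construct the same $(A,k)$-ribboned wall with the same $m$, handlebar types, and labels $g_i,h_j$, verify (A1)--(A7) algebraically from the failure of \ref{item:ep1} or \ref{item:ep2} exactly as the paper does, and then invoke planarity of the resulting graph to show no two $\Lambda$-allowable $A$-paths are disjoint while no small vertex set hits them all. The only cosmetic difference is that you phrase the Erd\H{o}s-P\'osa-failure argument in terms of the simplified grids of Figure~\ref{fig:obstructions} rather than the ribboned wall $G(\mcr)$ itself, but both are planar and the Jordan-curve/Menger reasoning is identical.
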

    \begin{proof}
         If $\Lambda$ does not satisfy the Erd\H{o}s-P\'osa condition, then $\Lambda$ fails either \ref{item:ep1} or \ref{item:ep2}. 
         
        First suppose that \ref{item:ep1} fails.
        Then there exist $a,b,c\in\G$ such that $a+b+c\in\Lambda$, $a+b\not\in\Lambda$, and $(2a+\langle c\rangle)\cap\Lambda=(2b+\langle c\rangle)\cap\Lambda=\emptyset$.
        Let $\mcr=(W,(\mcp_1),\mcq_1,\mcq_2,\g)$ be an $(A,k)$-ribboned wall such that every $N^W$-path in $W$ is $\g$-zero, $\mcp_1$ is in series, $\mcq_1$ and $\mcq_2$ are disjoint, and every path in $\mcp_1,\mcq_1,\mcq_2$ has $\g$-length $c,a,b$ respectively (see Figure \ref{fig:obs1}).\footnote{Note that the graphs in Figure \ref{fig:obstructions} are simplified versions and are not quite $(A,k)$-ribboned walls because they use grids instead of walls and, instead of $W$-handlebars (which should have disjoint paths attaching to the first or last column), we have paths attaching to the top row sharing endpoints.}
        
        Then \ref{item:Aobstructions-union} and \ref{item:Aobstructions-zerowall} are satisfied by definition and, since $a+b+c\in\Lambda$, \ref{item:Aobstructions-allowabletransversals} is satisfied.
        Since $a+b\not\in \Lambda$ and $(2a+\langle c\rangle)\cap\Lambda=(2b+\langle c\rangle)\cap\Lambda=\emptyset$, \ref{item:Aobstructions-minimality} and \ref{item:Aobstructions-samesideApath} are satisfied.
        Moreover, since $\mcp_1$ is the only $W$-handlebar and it is in series, \ref{item:Aobstructions-even} and \ref{item:Aobstructions-handlebars} are satisfied.
        Hence, $\mcr$ is a $\Lambda$-obstruction. 

        Since every $\Lambda$-allowable $A$-path contains one path in each $\mcq_1,\mcq_2$ and at least one path in $\mcp_1$, and since $G(\mcr)$ is planar, it is clear that no two $\Lambda$-allowable $A$-paths are disjoint. On the other hand, there does not exist a vertex set intersecting all $\Lambda$-allowable $A$-paths with fewer than $k$ vertices in $G(\mcr)$. Hence, the family of $\Lambda$-allowable $A$-paths does not satisfy the Erd\H{o}s-P\'osa property.

        Now suppose that \ref{item:ep2} fails. Then there exist $a,b,c\in\G$ such that $2a+b +  c\in\Lambda$ and $(2a+\langle b\rangle)\cap\Lambda=(2a+\langle c\rangle)\cap\Lambda=\emptyset$.
        Let $\mcr=(W,(\mcp_1,\mcp_2),\mcq_1,\mcq_2,\g)$ be an $(A,k)$-ribboned wall such that every $N^W$-path in $W$ is $\g$-zero, $\mcp_1,\mcp_2$ are in series, $\mcq_1=\mcq_2$, and every path in $\mcp_1,\mcp_2,\mcq_1$ has $\g$-length $b,c,a$ respectively (see Figure \ref{fig:obs2}).

        Then \ref{item:Aobstructions-union} and \ref{item:Aobstructions-zerowall} are satisfied by definition and, since $2a+b+c\in\Lambda$, \ref{item:Aobstructions-allowabletransversals} is satisfied. 
        Since $(2a+\langle b\rangle)\cap\Lambda = (2a+\langle c\rangle)\cap\Lambda = \emptyset$, \ref{item:Aobstructions-minimality} is satisfied.
        Since all handlebars $(\mcp_i:i\in[2])$ are in series, \ref{item:Aobstructions-even} and \ref{item:Aobstructions-handlebars} are satisfied. Since there are more than one handlebars, \ref{item:Aobstructions-samesideApath} is trivially satisfied.
        Hence, $\mcr$ is a $\Lambda$-obstruction. 

        Every $\Lambda$-allowable $A$-path contains two paths in $\mcq_1\cup\mcq_2$ and at least one path in each $\mcp_1,\mcp_2$. Since $\mcp_1,\mcp_2,(\mcq_1\cup\mcq_2)$ are pairwise non-mixing and since $G(\mcr)$ is planar, no two $\Lambda$-allowable $A$-paths in $G(\mcr)$ are disjoint, but there does not exist a vertex set intersecting all $\Lambda$-allowable $A$-paths with fewer than $k$ vertices in $G(\mcr)$. Hence, the family of $\Lambda$-allowable $A$-paths does not satisfy the Erd\H{o}s-P\'osa property.
    \end{proof}

    \begin{proposition} \label{prop:EPnoobstruction}
        Let $\G$ be a finite abelian group and let $\Lambda\subseteq \G$. If $\Lambda$ satisfies the Erd\H{o}s-P\'osa condition, then there does not exist a $\Lambda$-obstruction.
    \end{proposition}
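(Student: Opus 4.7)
The plan is to assume, for contradiction, that a $\Lambda$-obstruction $(W,(\mcp_i : i \in [m]), \mcq_1,\mcq_2,\g)$ exists and to derive a contradiction from the identity $h_1+h_2+S \in \Lambda$ of \ref{item:Aobstructions-allowabletransversals} (writing $S:=\sum_{i=1}^m g_i$), via a case analysis on $m$ and on whether $\mcq_1=\mcq_2$. The guiding observation is that \ref{item:Aobstructions-minimality} supplies a family of \emph{forbidden cosets}: for each $i\in[m]$ and each $(b_1,b_2)\in\{(2,0),(1,1),(0,2)\}$, the coset $\gen{g_j : j\neq i}+b_1h_1+b_2h_2$ is disjoint from $\Lambda$. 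The plan is to choose the triples in \ref{item:ep1} and \ref{item:ep2} so that every alternative they present lands in one of these cosets.

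The $m=1$ cases are immediate. If $\mcq_1=\mcq_2$, so $h_1=h_2=:h$, then $g_1+2h \in \gen{g_1}+2h$ contradicts \ref{item:Aobstructions-samesideApath} at once. If $\mcq_1\neq\mcq_2$, I would apply \ref{item:ep1} to $(a,b,c)=(h_1,h_2,g_1)$: the $a+b$ option is killed by \ref{item:Aobstructions-minimality} with $b_1=b_2=1$ (since $\gen{g_j:j\neq 1}=\{0\}$ forces $h_1+h_2\notin\Lambda$), and the other two options are killed by \ref{item:Aobstructions-samesideApath}.

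For $m\geq 2$ with $\mcq_1=\mcq_2$, I would apply \ref{item:ep2} to the identity $2h+S\in\Lambda$ with $(a,b,c)=(h,g_1,\sum_{j\geq 2}g_j)$. The containments $\gen{g_1}\subseteq\gen{g_j:j\neq 2}$ and $\gen{\sum_{j\geq 2}g_j}\subseteq\gen{g_j:j\neq 1}$ place the two conclusions $(2h+\gen{g_1})\cap\Lambda\neq\emptyset$ and $(2h+\gen{\sum_{j\geq 2}g_j})\cap\Lambda\neq\emptyset$ into forbidden cosets of \ref{item:Aobstructions-minimality}, a contradiction.

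The main obstacle is $m\geq 2$ with $\mcq_1\neq\mcq_2$: here $h_1+h_2+S$ is not naturally of the form $2a+b+c$, so \ref{item:ep2} does not apply directly. I plan a two-step argument. First apply \ref{item:ep1} with $(a,b,c)=(h_1,h_2,S)$: the $a+b$ option is killed by \ref{item:Aobstructions-minimality} with $b_1=b_2=1$, so (after possibly swapping $h_1\leftrightarrow h_2$) some $\lambda := 2h_1+tS \in \Lambda$ with $t\in\mathbb{Z}$. Since $\lambda$ is now of the form $2a+b+c$, apply \ref{item:ep2} to it with $(a,b,c)=(h_1, tg_1, t\sum_{j\geq 2}g_j)$: exactly the same containments $\gen{tg_1}\subseteq\gen{g_j:j\neq 2}$ and $\gen{t\sum_{j\geq 2}g_j}\subseteq\gen{g_j:j\neq 1}$ place both conclusions of \ref{item:ep2} into forbidden cosets of \ref{item:Aobstructions-minimality}, completing the contradiction. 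Notably, the argument uses only \ref{item:Aobstructions-allowabletransversals}, \ref{item:Aobstructions-minimality}, and \ref{item:Aobstructions-samesideApath}, never invoking \ref{item:Aobstructions-even} or \ref{item:Aobstructions-handlebars}.
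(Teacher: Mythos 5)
Your proof is correct, and it takes a genuinely different route from the paper's. The paper's top-level case split is on the handlebar types: if some $\mcp_i$ is nested or crossing, it applies \ref{item:ep1} to $(h_1+g_m,\,h_2,\,\sum_{i<m}g_i)$ and crucially invokes \ref{item:Aobstructions-even} to kill the $(2a+\gen{c})$ option; otherwise it assumes all handlebars are in series and splits further on $m=1$ versus $m\ge2$, and within $m\ge2$ on whether $(2h_j+\gen{g_1,\dots,g_m})\cap\Lambda$ is empty. Your split is purely on $m$ and on whether $\mcq_1=\mcq_2$, and as you note you never touch \ref{item:Aobstructions-even} or the handlebar types; every forbidden-coset argument runs through \ref{item:Aobstructions-minimality} and \ref{item:Aobstructions-samesideApath} alone. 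What this buys you is a tighter picture of which irreducibility axioms actually force the EP condition to fail. In fact, a close reading of the paper's ``Case 2'' (its Cases 2a--2c) shows that the ``all in series'' hypothesis is never used there and that 2a--2c already exhaust $m\ge1$; so the paper's Case 1, and with it the appeal to \ref{item:Aobstructions-even}, is logically redundant. Your organization makes this explicit. Your Case 4 ($m\ge2$, $\mcq_1\ne\mcq_2$) also takes a slightly different path than the paper's Case 2b/2c: rather than case-split on whether $(2h_j+\gen{g_1,\dots,g_m})\cap\Lambda\ne\emptyset$, you first fire \ref{item:ep1} on $(h_1,h_2,S)$ to \emph{derive} $(2h_j+\gen{S})\cap\Lambda\ne\emptyset$ for some $j$, and then pass to \ref{item:ep2}; the two routes land at the same contradiction via \ref{item:Aobstructions-minimality} with $(b_1,b_2)\in\{(2,0),(0,2)\}$. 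One small point to make explicit: you should state up front that $m\ge1$, which is where \ref{item:Aobstructions-handlebars} enters --- if $m=0$ then \ref{item:Aobstructions-minimality} and \ref{item:Aobstructions-samesideApath} are vacuous and your case analysis would not be exhaustive.
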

    \begin{proof}
        Suppose that there is a $\Lambda$-obstruction $\mcr=(W,(\mcp_i:i\in[m]),\mcq_1,\mcq_2,\g)$, satisfying \ref{item:Aobstructions-union}-\ref{item:Aobstructions-samesideApath}.
        Note that by \ref{item:Aobstructions-handlebars}, $m\geq 1$ and at least one of $(\mcp_i:i\in[m])$ is in series.

        First consider the case that there is a $W$-handlebar in $(\mcp_i:i\in[m])$ that is either nested or crossing, say $\mcp_m$. Then $m\geq 2$ by \ref{item:Aobstructions-handlebars}.
        Define $a=h_1+g_m$, $b=h_2$, and $c=\sum_{i=1}^{m-1} g_i$.
        Then $a+b+c\in\Lambda$ by \ref{item:Aobstructions-allowabletransversals}, and we have $a+b\not\in\Lambda$ and $(2b+\langle c\rangle)\cap\Lambda=\emptyset$ by \ref{item:Aobstructions-minimality}. We also have $(2a+\langle c\rangle)\cap\Lambda=\emptyset$ by \ref{item:Aobstructions-even}. This violates \ref{item:ep1}, hence $\Lambda$ fails the Erd\H{o}s-P\'osa condition.

        So we may assume that every $W$-handlebar in $(\mcp_i:i\in[m])$ is in series.
        If $m=1$, define $a=h_1,b=h_2$, and $c=g_1$. Then $a+b+c\in\Lambda$ by \ref{item:Aobstructions-allowabletransversals} and $a+b\not\in\Lambda$ by \ref{item:Aobstructions-minimality}. Moreover, by \ref{item:Aobstructions-samesideApath}, we have $(2a+\langle c\rangle)\cap\Lambda = (2b+\langle c\rangle)\cap\Lambda = \emptyset$. This violates \ref{item:ep1}, hence $\Lambda$ fails the Erd\H{o}s-P\'osa condition.

        So we may assume that $m\geq 2$. Suppose further that $(2h_1+\langle g_1,\dots,g_m\rangle)\cap\Lambda\neq\emptyset$. Then there exist integers $c_1,\dots,c_m$ such that $2h_1+\sum_{i\in[m]}c_ig_i\in\Lambda$.
        Define $a=h_1$, $b=\sum_{i\in[m-1]}c_ig_i$, and $c=c_mg_m$. Then $2a+b+c\in \Lambda$, but $(2a+\langle b\rangle)\cap\Lambda = (2a+\langle c\rangle)\cap\Lambda = \emptyset$ by \ref{item:Aobstructions-minimality}. This violates \ref{item:ep2}, hence $\Lambda$ fails the Erd\H{o}s-P\'osa condition. Similarly, if $(2h_2+\langle g_1,\dots,g_m\rangle)\cap\Lambda\neq\emptyset$, then $\Lambda$ fails the Erd\H{o}s-P\'osa condition.

        So we may assume that $(2h_1+\langle g_1,\dots,g_m\rangle)\cap\Lambda = (2h_2+\langle g_1,\dots,g_m\rangle)\cap\Lambda = \emptyset$.
        Define $a=h_1,b=h_2$, and $c=\sum_{i\in[m]}g_i$.
        Then we have $a+b+c\in\Lambda$ by \ref{item:Aobstructions-allowabletransversals}, $a+b\not\in\Lambda$ by \ref{item:Aobstructions-minimality}, and $(2a+\langle c\rangle)\cap\Lambda = (2b+\langle c\rangle)\cap\Lambda=\emptyset$ by assumption. This violates \ref{item:ep1}, hence $\Lambda$ fails the Erd\H{o}s-P\'osa condition.
    \end{proof}

    \begin{proposition} \label{lem:Apathpacking}
        Let $\G$ be a finite abelian group and let $\Lambda\subseteq\G$.
        Let $\mcr=(W,(\mcp_i:i\in[m]),\mcq_1,\mcq_2,\g)$ be a $\Lambda$-irreducible $(A,k)$-ribboned wall. Then $G(\mcr)$ contains a half-integral packing of $k$ $\Lambda$-allowable $A$-paths.
        Moreover, if $\Lambda$ satisfies the Erd\H{o}s-P\'osa condition, then $G(\mcr)$ contains a packing of $k$ $\Lambda$-allowable $A$-paths.
    \end{proposition}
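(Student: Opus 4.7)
The approach is to reduce to the cycle-packing results of Section \ref{subsec:obscycles} via the \emph{joining} operation on the $A$-$W$-handlebars. By joining $\mcq_1$ and $\mcq_2$, I obtain a $W$-handlebar $\mcp$ of size at least $k$, each of whose paths has $\g$-length $h_1+h_2$, and the tuple $\mcr^{\circ} := (W,(\mcp_1,\dots,\mcp_m,\mcp))$ is then a $k$-ribboned wall. A minimally allowable cycle in $\mcr^{\circ}$ uses exactly one path from $\mcp$, so unidentifying the $A$-endpoint pair associated with that path produces an $A$-path in $G(\mcr)$ of $\g$-length $\sum_{i\in[m]} g_i + h_1 + h_2$, which lies in $\Lambda$ by \ref{item:Aobstructions-allowabletransversals} together with the strong balance of $W$ from \ref{item:Aobstructions-zerowall}. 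Crucially, joining identifies only vertices in $A$ and leaves path interiors intact, so a half-integral (respectively integral) packing of minimally allowable cycles in $\mcr^{\circ}$ translates directly into a half-integral (resp.\ integral) packing of $\Lambda$-allowable $A$-paths. Applying Lemma \ref{lem:cyclehalfpacking} to $\mcr^{\circ}$ immediately yields the half-integral statement.

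For the integral statement under the Erd\H{o}s-P\'osa condition, Proposition \ref{prop:EPnoobstruction} implies that $\mcr$ is not a $\Lambda$-obstruction, so either \ref{item:Aobstructions-handlebars} fails (every $\mcp_i$ is nested or crossing), or $m=1$ and \ref{item:Aobstructions-samesideApath} fails (so there exist $j\in[2]$ and an integer $c\geq 0$ with $cg_1+2h_j\in\Lambda$). In the first case, I exploit the freedom in choosing the enumeration of $\mcq_1\cup\mcq_2$ when forming $\mcp$ — which, by the remark following the definition of joining, allows $\mcp$ to be made either nested or crossing — to select the type of $\mcp$ so that the total number of crossing handlebars in $\mcr^{\circ}$ is even. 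Since additionally no handlebar of $\mcr^{\circ}$ is in series, none of the three clauses of \ref{item:obstructions-handlebars} is triggered, and Lemma \ref{lem:cyclepacking} delivers an integral packing of $k$ minimally allowable cycles.

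In the remaining case, $\mcp_1$ is in series and $cg_1+2h_j\in\Lambda$ for some $j\in[2]$ and $c\geq 0$. Using $|\mcq_j|\geq 2k$, I self-join $\mcq_j$ — which supports the in-series type — to obtain an in-series $W$-handlebar $\mcp'$ of size $k$ with $\g$-length $2h_j$. If $c\geq 1$, I apply Lemma \ref{lem:combining-handles} to concatenate $c$ consecutive paths of $\mcp_1$ into super-handles on a column-slice $W'\subseteq W$, producing a handlebar $\tilde{\mcp}_1$ of size $k$ and $\g$-length $cg_1$; by the type-control in Lemma \ref{lem:combining-handles}, $\tilde{\mcp}_1$ is in series regardless of the parity of $c$, since $\mcp_1$ is. Then $(W',(\tilde{\mcp}_1,\mcp'))$ is a $k$-ribboned wall whose two handlebars are both in series with no crossings, so \ref{item:obstructions-handlebars} is avoided and Lemma \ref{lem:cyclepacking} applies; when $c=0$, the single handlebar $(W,(\mcp'))$ alone suffices. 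The main obstacle I anticipate is the bookkeeping needed to verify that the joined and combined handlebars remain pairwise non-mixing on the chosen column-slice and that $W'$ retains sufficient order — but both follow from \ref{item:Aobstructions-union} and the order bound $\theta\geq k(m+2)$ built into the definition of an $(A,k)$-ribboned wall.
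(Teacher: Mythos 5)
Your approach — joining $\mcq_1,\mcq_2$ into a $W$-handlebar, choosing the type (nested/crossing) to control the crossing parity, and invoking Lemmas~\ref{lem:cyclehalfpacking} and~\ref{lem:cyclepacking} — is exactly the paper's, and the half-integral case and the ``\ref{item:Aobstructions-handlebars} fails'' case are handled identically. In the remaining case ($m=1$, $\mcp_1$ in series, \ref{item:Aobstructions-samesideApath} fails) the paper self-joins $\mcq_2$ and applies Lemma~\ref{lem:cyclepacking} directly; you correctly noticed that failing \ref{item:Aobstructions-samesideApath} only gives $(\langle g_1\rangle+2h_j)\cap\Lambda\neq\emptyset$, i.e.\ $cg_1+2h_j\in\Lambda$ for \emph{some} $c$, which the paper's phrasing elides, and you try to patch this by combining $c$ paths of $\mcp_1$ via Lemma~\ref{lem:combining-handles}.

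That patch has a real gap. Lemma~\ref{lem:combining-handles} with $d_1=c$ and target size $\theta=k$ requires $|\mcp_1|\geq kc$, but an $(A,k)$-ribboned wall only guarantees $|\mcp_1|\geq k$. Since the only a priori bound is $c<|\G|$, you would need $|\mcp_1|\geq k|\G|$ in the worst case, which the hypotheses do not provide, so the combining step cannot be carried out in general. (The same issue also affects the wall order: after combining you need the column-slice $W'$ to still have order $\geq 2k$, which the bound $\theta\geq k(m+2)=3k$ does furnish, so that part is fine — the bottleneck is the handlebar size, not the wall.) Separately, your $c=0$ branch is vacuous: $c=0$ would mean $2h_j\in\Lambda$, directly contradicting \ref{item:Aobstructions-minimality} with $(b_1,b_2)$ chosen so that $b_1h_1+b_2h_2=2h_j$, so that subcase never arises and your fallback ``the single handlebar $(W,(\mcp'))$'' would in any event produce $A$-paths of $\gamma$-length $2h_j\notin\Lambda$.

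What actually rescues this case in the paper is not stated inside Proposition~\ref{lem:Apathpacking} but is built into how the $\Lambda$-irreducible wall is produced: in Lemma~\ref{lem:makeirreducible}, whenever $(2h_j+\G')\cap\Lambda\neq\emptyset$ for some $j$, the construction sets $\mcq_1=\mcq_2=\mcq_j$, forcing $h_1=h_2=h_j$; then \ref{item:Aobstructions-allowabletransversals} immediately yields $g_1+2h_j=g_1+h_1+h_2\in\Lambda$, and the paper's two-in-series argument goes through with $c=1$. You should either assume this normalization (equivalently, add it to the hypotheses), or re-derive it: if $(\langle g_1\rangle+2h_j)\cap\Lambda\neq\emptyset$ and $h_1\neq h_2$, replace $\mcq_{3-j}$ by $\mcq_j$ and re-run the minimality reduction (step $(*)$ of Lemma~\ref{lem:makeirreducible}), which puts you in the $\mcq_1=\mcq_2$ situation without touching $\mcp_1$. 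Without one of these, the combining approach as written does not close the case.
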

    \begin{proof}
        Let $\mcp_{m+1}$ be the $W$-handlebar obtained by joining $\mcq_1$ and $\mcq_2$ so that
        \begin{itemize}
            \item $\mcp_{m+1}$ is nested, if the number of crossing $W$-handlehars in $(\mcp_i:i\in[m])$ is even, and
            \item $\mcp_{m+1}$ is crossing, if the number of crossing $W$-handlebars in $(\mcp_i:i\in[m])$ is odd.
        \end{itemize}
        Then $\mcr^\circ:=(W,(\mcp_i:i\in[m+1]))$ is a $k$-ribboned wall.
        By Lemma \ref{lem:cyclehalfpacking}, $G(\mcr^\circ)$ contains a half-integral packing of $k$ minimally allowable cycles with respect to $\mcr^\circ$. By splitting $\mcp_{m+1}$, we obtain a half-integral packing of $k$ $A$-paths in $G(\mcr)$ that are minimally allowable with respect to $\mcr$ and hence $\Lambda$-allowable.

        Moreover, suppose that $\Lambda$ satisfies the Erd\H{o}s-P\'osa condition. We show that there is a packing of $k$ $\Lambda$-allowable $A$-paths in $G(\mcr)$.
        By Proposition \ref{prop:EPnoobstruction}, there does not exist a $\Lambda$-obstruction, so $\mcr$ fails to satisfy either \ref{item:Aobstructions-handlebars} or \ref{item:Aobstructions-samesideApath}.

        First suppose that \ref{item:Aobstructions-handlebars} is not satisfied. By the definition of $\mcp_{m+1}$, no $W$-handlebar in $(\mcp_i:i\in[m+1])$ is in series and the number of crossing $W$-handlebars in $(\mcp_i:i\in[m+1])$ is even. So $\mcr^\circ$ does not satisfy \ref{item:obstructions-handlebars} and, by Lemma \ref{lem:cyclepacking}, $G(\mcr^\circ)$ contains a packing of $k$ minimally allowable cycles.
        By splitting $\mcp_{m+1}$, we obtain a packing of $k$ $\Lambda$-allowable $A$-paths in $G(\mcr)$.

        So we may assume that \ref{item:Aobstructions-handlebars} holds and that \ref{item:Aobstructions-samesideApath} is not satisfied. Then $m=1$, so $\mcp_1$ is in series by \ref{item:Aobstructions-handlebars}, and since \ref{item:Aobstructions-samesideApath} fails, there exists $j\in[2]$ such that $(\langle g_1\rangle+2h_j)\cap \Lambda \neq \emptyset$, where $g_1,h_j$ are as in \ref{item:Aobstructions-zerowall}. Let us assume without loss of generality that $(\langle g_1\rangle+2h_2)\cap\Lambda\neq\emptyset$. 

        Let $\mcp_2'$ be the $W$-handlebar obtained by joining $\mcq_2$ and $\mcq_2$ so that $\mcp_2'$ is in series.
        Then $\mcr'=(W,(\mcp_1,\mcp_2'))$ is a $k$-ribboned wall with two $W$-handlebars, both of which are in series. So $\mcr'$ does not satisfy \ref{subitem:obstructions-3series} and, by Lemma \ref{lem:cyclepacking}, $G(\mcr')$ contains a packing of $k$ minimally allowable cycles. By splitting $\mcp_2'$, we obtain a packing of $k$ $\Lambda$-allowable $A$-paths in $G(\mcr)$.
    \end{proof}

    Assuming Theorem \ref{thm:mainApathstructure}, these propositions can be summarized into the following theorem, which immediately implies Theorem \ref{thm:mainep}.
    \begin{theorem}
        Let $\G$ be a finite abelian group and let $\Lambda\subseteq \G$. Let $A$ be a vertex set. Then 
        \begin{enumerate}
            \item the family of $\Lambda$-allowable $A$-paths satisfies the half-integral Erd\H{o}s-P\'osa property, and \label{item:main1}
            \item the following are equivalent:
            \begin{enumerate}[label=(2\alph*)]
                \item The family of $\Lambda$-allowable $A$-paths satisfies the Erd\H{o}s-P\'osa property. \label{item:main2a}
                \item $\Lambda$ satisfies the Erd\H{o}s-P\'osa condition. \label{item:main2b}
                \item There does not exist a $\Lambda$-obstruction. \label{item:main2c}
            \end{enumerate} 
        \end{enumerate}
    \end{theorem}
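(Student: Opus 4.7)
The plan is to deduce the theorem essentially as a direct consequence of Theorem \ref{thm:mainApathstructure} together with Propositions \ref{prop:noEPC}, \ref{prop:EPnoobstruction}, and \ref{lem:Apathpacking}, which have done all the substantive work. There is no significant new obstacle; the proof is a short gluing of the four statements.

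For part \eqref{item:main1}, I would apply Theorem \ref{thm:mainApathstructure} to $(G,\g)$ with parameter $k$. Define $f'(k):= f(2k,|\G|)$, where $f$ is the function from Theorem \ref{thm:mainApathstructure}. Given $k$, run the theorem with $2k$ in place of $k$. In the first outcome we already have $2k\geq k$ disjoint (hence half-integrally packed) $\Lambda$-allowable $A$-paths; in the second outcome we have a hitting set of size at most $f'(k)$. In the third outcome we obtain a $\Lambda$-irreducible $(A,2k)$-ribboned wall, and Proposition \ref{lem:Apathpacking} converts it into a half-integral packing of $2k\geq k$ $\Lambda$-allowable $A$-paths. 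Hence $f'$ is a half-integral \ep function for $\mcf$.

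For part \eqref{item:main2}, I will establish the implications \ref{item:main2a}$\Rightarrow$\ref{item:main2b}$\Rightarrow$\ref{item:main2c}$\Rightarrow$\ref{item:main2a} plus \ref{item:main2b}$\Rightarrow$\ref{item:main2a} in a circle covering all three. The implication \ref{item:main2a}$\Rightarrow$\ref{item:main2b} is the contrapositive of the second half of Proposition \ref{prop:noEPC}: if $\Lambda$ fails the \ep condition, then $\mcf$ fails the \ep property. The implication \ref{item:main2b}$\Rightarrow$\ref{item:main2c} is exactly Proposition \ref{prop:EPnoobstruction}. The implication \ref{item:main2c}$\Rightarrow$\ref{item:main2b} is the contrapositive of the first half of Proposition \ref{prop:noEPC}: if $\Lambda$ fails the \ep condition, then a $\Lambda$-obstruction exists.

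Finally, \ref{item:main2b}$\Rightarrow$\ref{item:main2a} uses Theorem \ref{thm:mainApathstructure} once more. Assuming \ref{item:main2b}, apply the theorem; the first two outcomes immediately give an \ep function (using $f$ itself). In the third outcome, a $\Lambda$-irreducible $(A,k)$-ribboned wall is produced, and since $\Lambda$ satisfies the \ep condition, the strengthened conclusion of Proposition \ref{lem:Apathpacking} yields an \emph{integral} packing of $k$ $\Lambda$-allowable $A$-paths in $G(\mcr)\subseteq G$. This completes the cycle of implications and the proof.
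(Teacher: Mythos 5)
Your proof is correct and follows essentially the same route as the paper: invoke Theorem \ref{thm:mainApathstructure}, use Proposition \ref{lem:Apathpacking} to convert an irreducible ribboned wall into a (half-integral or integral) packing, and derive the equivalences in (2) from Propositions \ref{prop:noEPC} and \ref{prop:EPnoobstruction}. The only cosmetic difference is in part (1), where you apply the structure theorem with $2k$ in place of $k$; this is unnecessary, since Proposition \ref{lem:Apathpacking} already delivers a half-integral packing of $k$ $\Lambda$-allowable $A$-paths from a $\Lambda$-irreducible $(A,k)$-ribboned wall, so one can run the theorem at parameter $k$ directly as the paper does (the $2k$ substitution just inflates the \ep function for no gain).
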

    \begin{proof}
        Let $(G,\g)$ be a $\G$-labelled graph.
        By Theorem~\ref{thm:mainApathstructure}, either 
        \begin{itemize}
            \item $(G,\g)$ contains $k$ disjoint $\Lambda$-allowable $A$-paths,
            \item there is a vertex set of size at most $f(k)$ intersecting all $\Lambda$-allowable $A$-paths, or
            \item $(G,\g)$ contains a $\Lambda$-irreducible $(A,k)$-ribboned wall $\mcr$. 
        \end{itemize}  
        In the first two outcomes, we are done.
        In the third outcome, by Proposition \ref{lem:Apathpacking}, $G(\mcr)$ contains a half-integral packing of $k$ $\Lambda$-allowable $A$-paths. This proves \eqref{item:main1}. 

        Moreover, by Proposition \ref{lem:Apathpacking}, if $\Lambda$ satisfies the Erd\H{o}s-P\'osa condition, then $G(\mcr)$ contains a packing of $k$ $\Lambda$-allowable $A$-paths. This proves that \ref{item:main2b} implies \ref{item:main2a}. 
        Proposition \ref{prop:noEPC} shows that \ref{item:main2a} implies \ref{item:main2b} and that \ref{item:main2c} implies \ref{item:main2b}.
        Proposition \ref{prop:EPnoobstruction} shows that \ref{item:main2b} implies \ref{item:main2c}.
    \end{proof}

Let us briefly discuss how the proof can be adapted to accommodate long paths (paths with at least $\ell$ edges, for fixed $\ell$). In Lemmas \ref{lem:cyclehalfpacking} and \ref{lem:cyclepacking}, by taking a $k$-ribboned wall $\mcr=(W,(\mcp_i:i\in[m]))$ such that $W$ has a slightly larger order, say at least $km+k\ell$, one can ensure that, in the (half-integral) packing of minimally allowable cycles, each cycle has at least $\ell$ edges.
Hence, in Proposition \ref{lem:Apathpacking}, by taking a $\Lambda$-irreducible $(A,k)$-ribboned wall whose wall has a slightly larger order, we can obtain a (half-integral) packing of long $\Lambda$-allowable $A$-paths. So we just need to ask for a $\Lambda$-irreducible $(A,k)$-ribboned wall whose wall has a slightly larger order in Theorem \ref{thm:mainApathstructure}; this can easily be done by starting with a larger wall in the proof of Theorem \ref{thm:mainApathstructure}.

\section{Proof outline and lemmas} \label{sec:lemmas}
Let us first outline the proof of Theorem \ref{thm:mainApathstructure} along with a rough description of lemmas proved in this section.
Let $\Gamma$ be a finite abelian group and let $\Lambda\subseteq \G$. 
If a $\G$-labelled graph $(G,\g)$ does not contain a large packing of $\Lambda$-allowable $A$-paths nor a small vertex set intersecting every $\Lambda$-allowable $A$-path, then $G-A$ contains a large order tangle as described in Lemma \ref{lem:minctextangle}, from which we obtain a large strongly balanced wall $W$ by Lemmas \ref{thm:tanglewall} and \ref{lem:balancedsubwall}.

We first construct a large ribboned wall by iteratively applying Lemma \ref{lem:addlinkage} to obtain $W$-handlebars whose paths are $\g$-nonzero, as long as the $\g$-lengths of the new paths allow us to generate more elements of $\G$ than the current $W$-handlebars. At the end of this process, we obtain a subgroup $\G'$ of $\G$ such that the $\g$-lengths of the obtained $W$-handlebars generate $\G'$ (we call such a ribboned wall \emph{$\G'$-generating}) and, moreover, all $W$-handles whose $\g$-lengths are not in $\G'$ can be destroyed by a small number of vertices. This is done in Lemma \ref{lem:generatinglinkagessubwall} to obtain a $\G'$-generating ribboned wall $(W',(\mcp_1,\dots,\mcp_m))$.

From this, we construct an $(A,k)$-ribboned wall by adding $A$-$W'$-handlebars of certain $\g$-lengths. We first show that there is a pair of cosets $H_1,H_2$ of $\G'$ such that $H_1+H_2$ contains an element of $\Lambda$ and such that there does not exist a small vertex set intersecting all $A$-$W'$-paths whose $\g$-length is in $H_1$ and there does not exist a small vertex set intersecting all $A$-$W'$-paths whose $\g$-length is in $H_2$. 
This allows us to find a large subwall $W''$ of $W'$ and two large $A$-$W''$-linkages $\mcq_1,\mcq_2$ that nicely link to $W''$ such that every path in $\mcq_j$ has $\g$-length in the coset $H_j$, for $j\in[2]$. This is done in Lemma \ref{lem:H1H2linkages} by iteratively applying Lemma \ref{lem:AWpathslengthL}, which is a lemma from \cite{thomas2023packingApaths} stating that for each $\ell\in \G$, either there is a subwall $W''$ of $W'$ and a large $A$-$W''$-linkage that nicely links to $W''$ whose paths have $\g$-length $\ell$, or there is a small vertex set intersecting all such paths.

At this point, the linkages $\mcq_1,\mcq_2$ need not be disjoint from each other or from $\mcp_1,\dots,\mcp_m$. 
In Lemma \ref{lem:PQdisjointwithsame12attachments}, we show that a new $A$-$W''$-linkage can be made disjoint from an existing set of handlebars by possibly losing a few paths in each handlebar.
In Lemma \ref{lem:generatingwallribbon}, we apply Lemma \ref{lem:PQdisjointwithsame12attachments} twice, once to add $\mcq_1$ to $\mcp_1\cup\dots\cup\mcp_m$, then a second time to add $\mcq_2$; moreover, we show that these handlebars can be made non-mixing by applying Lemma \ref{lem:handlebarsnonmixing}. This allows us to obtain an $(A,k')$-ribboned wall for some large $k'$. Finally, we show in Lemma \ref{lem:makeirreducible} that we can reduce this to a $\Lambda$-irreducible $(A,k)$-ribboned wall.

Let $W$ be a wall and for a non-negative integer $m$, let $\mcp_1,\dots,\mcp_m$ be $W$-handlebars in a $\Gamma$-labelled graph $(G,\gamma)$.
For a subgroup $\Gamma'\le\Gamma$, we say that $(W,\{\mcp_1,\dots,\mcp_m\})$ is \emph{$\Gamma'$-generating} if $W$ is a wall of order at least $\sum_{i\in[m]}|\mcp_i|$ that is strongly balanced, $\mcp_1,\dots,\mcp_m$ are pairwise disjoint and non-mixing, and there exist $g_1,\dots,g_m\in\Gamma'-\{0\}$ such that $\Gamma'=\langle g_1,\dots,g_m\rangle$ (if $m=0$, then $\G'=\{0\}$) and, for all $i\in[m]$, we have that every path in $\mcp_i$ has $\g$-length $g_i$ and that $\G'\neq\langle \{g_1,\dots,g_m\}\setminus\{g_i\}\rangle$. Note that $m<|\G'|$.

\begin{lemma}
    \label{lem:generatinglinkagessubwall}
    There exist functions $f_{\ref{lem:generatinglinkagessubwall}}:\mbn^3\to\mbn$ and $g_{\ref{lem:generatinglinkagessubwall}}:\mbn^2\to\mbn$ satisfying the following.
    Let $\G$ be a finite abelian group and let $w,k$ be positive integers with $w\geq 3$.
    If $(G,\g)$ is a $\G$-labelled graph containing a $f_{\ref{lem:generatinglinkagessubwall}}(w,k,|\G|)$-wall $W$, then there exists a subgroup $\G'\le\G$ and a vertex set $Z\subseteq V(G)$ with $|Z|\le g_{\ref{lem:generatinglinkagessubwall}}(k,|\G|)$ such that $(G-Z,\g)$ contains a strongly balanced $w$-subwall $W'$ of $W$ and a family of $W'$-handlebars $\mcp_1,\dots,\mcp_m$ each of size at least $k$ such that
    \begin{enumerate}
    [label=(\roman*)]
        \item $(W',\{\mcp_1,\dots,\mcp_m\})$ is $\Gamma'$-generating, and
        \item the $\mct_{W'}$-large 3-block $(B,\g_B)$ of $(G-Z,\gamma)$ is $\Gamma'$-balanced.
    \end{enumerate}
\end{lemma}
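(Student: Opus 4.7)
The plan is to construct the subgroup $\G'$ and the handlebars iteratively, enlarging $\G'$ by one generator at a time, until the quotient labelling becomes ``balanced'' in the part of $G$ dominated by the tangle of a large subwall.

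First I would apply Lemma~\ref{lem:balancedsubwall} to extract from the given wall a strongly balanced subwall $W^{(0)}$ of sufficiently large order, so that in particular $(W^{(0)},\g)$ is balanced. I would then run an iterative procedure maintaining, after step~$i$, a subgroup $\G_i \le \G$ with generators $g_1,\dots,g_i$, a strongly balanced subwall $W^{(i)}$ of $W$, pairwise disjoint non-mixing $W^{(i)}$-handlebars $\mcp_1,\dots,\mcp_i$ of size at least a decreasing but still large parameter $k_i$ where every path of $\mcp_j$ has $\g$-length $g_j$, and a vertex set $Z_i$ of size bounded in $k$ and $|\G|$. The invariant is that $(W^{(i)},\{\mcp_1,\dots,\mcp_i\})$ is $\G_i$-generating.

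At step $i+1$, I would consider the quotient labelling $\bar\g_i \colon E(G) \to \G/\G_i$ and apply Lemma~\ref{lem:wollanApath} with $A := \branch(W^{(i)})$ to the $\bar\g_i$-labelled graph. Either a small hitting set $Z'$ destroys every $\bar\g_i$-nonzero $\branch(W^{(i)})$-path in $G - Z_i$ (and we terminate with $Z := Z_i \cup Z'$), or there exist $f_{\ref{lem:addlinkage}}(k_i)$ disjoint $\bar\g_i$-nonzero $\branch(W^{(i)})$-paths. In the latter case, because $W^{(i)}$ is strongly balanced and therefore $\G_i$-balanced, Lemma~\ref{lem:addlinkage} produces a column-slice $W^{(i+1)}$ of $W^{(i)}$ and linkages $\mcq_1,\dots,\mcq_{i+1}$ of large size where $\mcq_j \subseteq$ row-extension of $\mcp_j$ to $W^{(i+1)}$ for $j \le i$, and $\mcq_{i+1}$ is $\bar\g_i$-nonzero. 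Pigeonholing over the at most $|\G|$ cosets, a large sub-family of $\mcq_{i+1}$ shares a common $\g$-length $g_{i+1} \notin \G_i$. Lemma~\ref{lem:handlebarsnonmixing} then yields non-mixing handlebars, restoring the invariants with $\G_{i+1} := \langle \G_i, g_{i+1}\rangle$; if adding $g_{i+1}$ makes some earlier generator redundant I would prune the list to preserve the minimality clause in the definition of $\G_i$-generating. Since $|\G_i|$ strictly grows at every successful iteration, the process halts after at most $\log_2|\G|$ steps. Setting $\G' := \G_i$, $W' := W^{(i)}$ (after one final application of Lemma~\ref{lem:balancedsubwall} to guarantee the output wall is strongly balanced of order $w$, propagating the handlebars by row-extension and Lemma~\ref{lem:combining-handles}), and $Z$ as accumulated, conclusion (i) holds.

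For conclusion (ii), note that by the terminating step every $\branch(W')$-path in $G - Z$ has $\g$-length in $\G'$, equivalently is $\bar\g$-zero for the quotient $\bar\g \colon E(G-Z) \to \G/\G'$. Because $W'$ has order much larger than $3$ and its tangle $\mct_{W'}$ dominates the large 3-block $(B,\g_B)$, the set $A := V(B) \cap \branch(W')$ has size at least $3$, so Lemma~\ref{lem:balanced3block} applied to $(G-Z,\bar\g)$ gives that $(B,\bar{\g_B})$ is balanced, i.e.\ $(B,\g_B)$ is $\G'$-balanced.

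The main obstacle is the bookkeeping: one must choose the parameters $k_i$ and $w_i$ in reverse so that after at most $\log_2|\G|$ iterations, each costing a pigeonhole factor of $|\G|$ and the constants from Lemmas~\ref{lem:handlebarsnonmixing}, \ref{lem:combining-handles}, \ref{lem:addlinkage}, and \ref{lem:wollanApath}, the final handlebars still have size at least $k$ and $W'$ has order at least $w$. The starting wall order $f_{\ref{lem:generatinglinkagessubwall}}(w,k,|\G|)$ and the hitting-set bound $g_{\ref{lem:generatinglinkagessubwall}}(k,|\G|)$ are then obtained as explicit (iterated) compositions of these functions. A secondary subtlety is the verification that enough branch vertices of $W'$ actually lie in the $\mct_{W'}$-large 3-block $B$ to invoke Lemma~\ref{lem:balanced3block}; this uses the standard property that the large side of a wall-induced tangle must contain most of the wall's branch vertices, once the order of $W'$ exceeds the sum of bounds inherent to the 3-block decomposition.
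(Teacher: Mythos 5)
Your proposal captures the same two main tools the paper uses (Lemma~\ref{lem:wollanApath} in quotients to detect nonzero paths, and Lemma~\ref{lem:addlinkage} to convert them into a new handlebar while keeping the old ones, with Lemma~\ref{lem:balanced3block} at the end for conclusion~(ii)), and the same termination argument (at most $\log_2|\G|$ or $|\G|$ successful enlargements of $\G'$). However, the paper does not run an explicit loop: it chooses the vertex set $Z$ and subgroup $\G''$ \emph{up front} by an extremal argument (minimize $\G''$ among pairs $(\G'',Z)$ with $|Z|$ within budget such that the $\mct_W$-large $3$-block of $(G-Z,\g)$ is $\G''$-balanced), then chooses the handlebars with $m$ maximal \emph{inside} $G-Z$, and derives a contradiction if the two subgroups differ. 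This is not merely stylistic; it sidesteps a genuine gap in your iterative version.

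The gap is this: in your loop, the terminating hitting set $Z'$ is produced \emph{after} the wall $W^{(i)}$ and the handlebars $\mcp_1,\dots,\mcp_i$ have already been constructed. Nothing prevents $Z'$ from intersecting $W^{(i)}$ or the handlebars. If $Z'\cap V(W^{(i)})\neq\emptyset$ then $W^{(i)}$ is no longer a wall in $G-Z'$, its tangle changes, and the $\mct_{W^{(i)}}$-large $3$-block in $G-Z'$ is not even well-defined a priori; similarly, up to $|Z'|$ paths in each $\mcp_j$ may be destroyed. You would need to choose $W^{(i)}$ large enough to contain a subwall disjoint from $Z'$, reroute all the handlebars into that subwall (which requires the subwall to be a column-slice or a further Menger/routing argument), prune the handlebars, and re-verify that the $\mct$-large $3$-block of the new wall in $G-Z'$ is the same $B$. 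None of this is addressed; it is precisely the bookkeeping you wave at in the last paragraph, but it is the load-bearing part, not a routine composition of bounds. In the paper's extremal formulation the auxiliary hitting set $Z'$ appears only inside the contradiction, never in the output, so this cleanup problem never arises. I'd recommend either adopting the paper's ``fix $Z$ first, then build handlebars in $G-Z$'' ordering, or explicitly carrying out the post-$Z'$ surgery on the wall, handlebars, and tangle.

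A minor point: your final application of Lemma~\ref{lem:balancedsubwall} is redundant — once $W^{(0)}$ is strongly balanced, every subwall (with inherited nails) is automatically strongly balanced — and it would in any case be awkward, since Lemma~\ref{lem:balancedsubwall} does not produce a column-slice, so row-extension of the existing handlebars into its output is not immediate.
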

\begin{proof}
    Define 
    \begin{align*}
    g_{\ref{lem:generatinglinkagessubwall}}(k,|\G|) &=50|\G|f_{\ref{lem:addlinkage}}(k|\G|^{|\G|})^4, \\
    w_1 &= w_{\ref{lem:addlinkage}}(w,k|\G|), \\
    w_i &= w_{\ref{lem:addlinkage}}(w_{i-1},k(4|\G|)^i) \text{ for }i\geq 2,\text{ and}\\f_{\ref{lem:generatinglinkagessubwall}}(w,k,|\G|) &= f_{\ref{lem:balancedsubwall}}(w_{|\G|},|\G|)+g_{\ref{lem:generatinglinkagessubwall}}(k,|\G|).
    \end{align*}
    Suppose that $(W,\g)$ is an $f_{\ref{lem:generatinglinkagessubwall}}(w,k,|\G|)$-wall in $(G,\g)$.
    Choose a subgroup $\G''$ of $\G$ and a vertex set $Z\subseteq V(G)$ with $|Z|\le 50(|\G|-|\G''|)f_{\ref{lem:addlinkage}}(k|\G|^{|\G|})^4$ such that the $\mct_W$-large 3-block $(B,\g_B)$ of $(G-Z,\g)$ is $\G''$-balanced and, subject to this condition, $\G''$ is minimal. Note that this condition is satisfied by $\G$ and $\emptyset$  because $(G,\g)$ (and hence the $\mct_W$-large 3-block of $(G-\emptyset,\g)$) is $\G$-balanced. Hence, such a choice of $\G''$ and $Z$ exists.

    Next, choose an integer $m$, a strongly balanced $w_{|\G|-m}$-subwall $W'$ of $W$ in $(G-Z,\g)$, and $W'$-handlebars $\mcp_1,\dots,\mcp_m$ in $(G-Z,\g)$, each of size at least $k(4|\G|)^{|\G|-m}$, so that $(W',\{\mcp_1,\dots,\mcp_m\})$ is $\G'$-generating for some $\G'\le\G$ and, subject to this condition, $m$ is maximal.  
    Again, note that such a choice exists; since 
    \[|Z|\le 50(|\G|-|\G''|)f_{\ref{lem:addlinkage}}(k|\G|^{|\G|})^4\le g_{\ref{lem:generatinglinkagessubwall}}(k,|\G|),\] 
    and $W$ has order \[f_{\ref{lem:generatinglinkagessubwall}}(w,k,|\G|) = f_{\ref{lem:balancedsubwall}}(w_{|\G|},|\G|)+g_{\ref{lem:generatinglinkagessubwall}}(k,|\G|),\] 
    there is a $f_{\ref{lem:balancedsubwall}}(w_{|\G|},|\G|)$-subwall $W^*$ of $W$ in $(G-Z,\g)$, and by Lemma \ref{lem:balancedsubwall}, there is a strongly balanced $w_{|\G|}$-subwall $W^{**}$ of $W^*$ (and hence of $W$) , so $(W^{**},\emptyset)$ is $\{0\}$-generating.

    We claim that $\G'=\G''$. Note that we have $\G'\le\G''$ because the $\mct_W$-large 3-block of $(G-Z,\g)$ is $\G''$-balanced and $W'\subseteq G-Z$ is a subwall of $W$ (this implies that $\mct_{W'}\subseteq \mct_W$ and hence the $\mct_{W'}$-large 3-block of $(G-Z,\g)$ is also $(B,\g_B)$).
    Now suppose that $\G'$ is a proper subgroup of $\G''$.

    By Lemma \ref{lem:wollanApath} (applied to $\G/\G'$), either there exist $f_{\ref{lem:addlinkage}}(k(4|\G|)^{|\G|-m})$ disjoint $V_{\neq 2}(W')$-paths in $(G-Z,\g)$ whose $\g$-lengths are not in $\G'$, or there is a vertex set $Z'\subseteq V(G)-Z$ with $|Z'|<50f_{\ref{lem:addlinkage}}(k(4|\G|)^{|\G|-m})^4$ such that every $V_{\neq 2}(W')$-path in $(G-Z-Z',\g)$ has $\g$-length in $\G'$.

    In the first case, since $W'$ is a wall of order \[w_{|\G|-m} = w_{\ref{lem:addlinkage}}\left( w_{|\G|-m-1}, k(4|\G|)^{|\G|-m}\right)\] and since each $\mcp_i$ has size at least $k(4|\G|)^{|\G|-m}$, we have by Lemma \ref{lem:addlinkage} (applied to $\G/\G')$ that there exist a $w_{|\G|-m-1}$-column-slice $W''$ of $W'$ and pairwise disjoint $W''$-handlebars $\mcp'_1,\dots,\mcp'_{m+1}$, each of size $\frac14 k(4|\G|)^{|\G|-m}$, such that $\mcp'_i$ is a subset of the row-extension of $\mcp_i$ to $W''$ in $W'$ for $i\in[m]$, and every path $P$ in $\mcp'_{m+1}$ satisfies $\g(P)\not\in \G'$.
    Then there is a subset $\mcp''_{m+1}\subseteq \mcp'_{m+1}$ with $|\mcp''_{m+1}|=k(4|\G|)^{|\G|-m-1}$ such that the paths in $\mcp''_{m+1}$ all have the same $\g$-length, say $g_{m+1}\not\in\G'$.
    Note that $W''$ is strongly balanced because it is a subwall of $W'$, which is strongly balanced.
    Hence $(W'',\{\mcp'_1,\dots,\mcp'_m,\mcp''_{m+1}\})$ is $\G'''$-generating, where $\G'''= \langle \Gamma'\cup\{g_{m+1}\}\rangle$, and this contradicts the maximality of $m$.

    So we may assume the second case, that there is a vertex set $Z'\subseteq V(G)-Z$ with $|Z'|<50f_{\ref{lem:addlinkage}}(k|\G|^{|\G|-m})^4$ such that every $V_{\neq 2}(W')$-path in $(G-Z-Z',\g)$ has $\g$-length in $\G'$.
    By Lemma \ref{lem:balanced3block} applied to $\G/\G'$ and $(G-Z-Z',\g)$, we have that the $\mct_{W'}$-large 3-block of $(G-Z-Z',\g)$ is $\G'$-balanced.
    Since $\G'$ is a proper subgroup of $\G''$, we have $|Z\cup Z'| < 50(|\G|-|\G''|)f_{\ref{lem:addlinkage}}(k|\G|^{|\G|})^4+ 50f_{\ref{lem:addlinkage}}(k|\G|^{|\G|})^4 \le 50(|\G|-|\G'|)f_{\ref{lem:addlinkage}}(k|\G|^{|\G|})^4$, contradicting the minimality of $\G''$.

    This proves our claim that $\G'=\G''$.
    Thus, we have a vertex set $Z\subseteq V(G)$ with $|Z|\leq 50(|\G|-|\G'|)f_{\ref{lem:addlinkage}}(k|\G|^{|\G|})^4 \leq 50|\G|f_{\ref{lem:addlinkage}}(k|\G|^{|\G|})^4=g_{\ref{lem:generatinglinkagessubwall}}(k,|\G|)$ such that $W'$ is a strongly balanced subwall of $W$ in $(G-Z,\g)$ of order $w_{|\G|-m}\geq w$, and $\mcp_1,\dots,\mcp_m$ is a family of $W'$-handlebars in $(G-Z,\g)$, each of size $k(4|\G|)^{|\G|-m} \geq k$, such that $(W',\{\mcp_1,\dots,\mcp_m\})$ is $\G'$-generating. Moreover, the $\mct_{W'}$-large 3-block of $(G-Z,\g)$ is $\G'$-balanced.
\end{proof}

\begin{lemma}
    [{\cite[Lemma 4.9]{thomas2023packingApaths}}]
    \label{lem:AWpathslengthL}
    Let $\Gamma$ be an abelian group with $\ell\in\Gamma$, let $w,t$ be positive integers with $w\geq 12t$, and let $T=3(36t)^2$.
    Let $(G,\gamma)$ be a $\Gamma$-labelled graph with $A\subseteq V(G)$ and let $(B,\gamma_B)$ be a 3-block of $(G-A,\gamma)$ such that $\g_B=\bm{0}$.
    Let $W'$ be an $(s+2)$-wall in $G-A$, where $s\geq (2w+1)(2T+1)$, and let $W$ be a 1-contained $s$-subwall of $W'$, such that $N^W\subseteq V(B)$.
    Suppose in addition that there does not exist $X\subseteq V(G)$ with $|X|<12T$ intersecting all $A$-$N^W$-paths of $\g$-length $\ell$ in $(G,\gamma)$.
    Then $W$ contains a compact $w$-subwall $W_1$ such that there are $t$ disjoint $A$-$W_1$-paths each of $\g$-length $\ell$ that nicely link to $W_1$.
\end{lemma}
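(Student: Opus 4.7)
The plan is as follows. Since no vertex set of size less than $12T$ hits every $A$-$N^W$-path of $\gamma$-length $\ell$, Menger's theorem (applied in the auxiliary graph obtained by identifying the vertices of $A$ into a single source and contracting each $N^W$-vertex to a sink while tracking the label constraint via an edge-weighting) provides at least $12T$ pairwise disjoint $A$-$N^W$-paths of $\gamma$-length $\ell$. Call this linkage $\mcl$; the first step is to reduce $\mcl$ to a $t$-linkage whose endpoints in $W$ are nicely arranged on the first row of a compact $w$-subwall.

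The second step exploits the quantitative slack in the hypothesis $s \geq (2w+1)(2T+1)$. The idea is to partition $W$ (up to its first and last row, which are free since $W$ is 1-contained in $W'$) into a grid of $(2T+1)\times(2T+1)$ pairwise disjoint compact $w$-subwalls $W_{i,j}$, using unions of consecutive rows and columns as separating ``strips.'' By pigeonhole, one such subwall $W_1$ (together with the ``frame'' of rows and columns around it up to a bounded distance) can be made to miss the interiors of at least $12T - o(T) \geq t$ of the $A$-$N^W$-paths; among those, we can assume $t$ paths have their $W$-endpoint on $\partial N^{W_1}$. After selecting such paths, the key is to relocate the endpoints so that they all land on the degree-$2$ nails of the first row of $W_1$.

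For the relocation step, use that $N^W \subseteq V(B)$ and that the 3-block $(B, \gamma_B)$ of $(G-A, \gamma)$ is trivially labelled ($\gamma_B = \bm{0}$). Each nail of $W_1$ lies in $V(B)$, and any $V(B)$-path realises $\gamma_B$-labels, which are zero. Hence we may append to each selected $A$-$N^W$-path a $V(B)$-path that redirects its endpoint to the desired degree-$2$ nail of $R_1^{W_1}$, without altering $\gamma$-length modulo $\G$ — the appended pieces have $\gamma$-length $0$. To keep the augmented paths disjoint, use Lemma \ref{lem:purelinkages} (and a Menger-type routing inside $B$, available because $B$ is 3-connected on $V(B)$) so that the rerouting paths form a nested or crossing linkage inside $B$, and then absorb the crossings into the paths themselves. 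The buffer of spare subwalls $W_{i,j}$ around $W_1$ provides the space for these reroutings without intersecting $W_1$ itself; this is where the factor $T = 3(36t)^2$ originates, from applying Lemma \ref{lem:purelinkages} to the $\binom{12T}{2}$-many pairs of endpoints and demanding a pure subcollection of size $t$.

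The hard part will be the disjointness bookkeeping during the final rerouting: we need the $A$-side pieces to remain disjoint from each other, from $W_1$, and from the rerouting segments inside $B$ simultaneously. This is why the hypothesis provides considerable slack ($w \geq 12t$, $s$ quadratic in $t$), and why pure linkages and 3-connectivity of $B$ must be combined carefully; concretely, one should first route inside $B - V(W_1)$ using the 3-connectivity of $B$ to reach a dense collection of targets, then use Lemma \ref{lem:purelinkages} to extract a pure sublinkage, and finally use the ``buffer'' rows and columns of $W - W_1$ to perform the last alignment onto the degree-$2$ nails of $R_1^{W_1}$ as required by the definition of nicely linking.
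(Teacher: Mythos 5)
This lemma is cited directly from Thomas--Yoo (\cite[Lemma 4.9]{thomas2023packingApaths}); the present paper does not prove it, so there is no in-paper proof to compare against. Evaluating your proposal on its own terms, the opening step contains a fatal gap.

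You begin by asserting that, since no set of fewer than $12T$ vertices meets all $A$-$N^W$-paths of $\gamma$-length $\ell$, Menger's theorem (in an auxiliary graph obtained by merging $A$ to a source, merging $N^W$ to a sink, and ``tracking the label constraint via an edge-weighting'') yields $12T$ pairwise disjoint $A$-$N^W$-paths of $\gamma$-length $\ell$. This is not correct: Menger's theorem gives the packing--covering duality for the family of \emph{all} $A$-$N^W$-paths, not for a subfamily constrained by $\gamma$-length. No edge-weighting or covering/auxiliary-graph construction recovers Menger for such constrained paths while preserving vertex-disjointness in the original graph --- for instance, the standard $|\Gamma|$-fold covering graph lets you route label constraints, but two vertex-disjoint paths there can project to intersecting paths in $G$. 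That this duality genuinely fails is exactly why the whole area is nontrivial: Wollan's result (Lemma~\ref{lem:wollanApath} in this paper) for the weaker family of $\gamma$-\emph{nonzero} $A$-paths already exhibits a polynomial gap ($k$ disjoint paths vs.\ a hitting set of size $50k^4$), not a linear Menger-type gap. The quadratic $T = 3(36t)^2$ in the hypothesis is itself a symptom: if Menger applied directly, a linear bound would suffice and the entire lemma would be a triviality, whereas Thomas--Yoo treat it as a substantial technical step.

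Consequently the later steps, while individually plausible in spirit (pigeonholing over a grid of $w$-subwalls, using $\gamma_B = \bm{0}$ to append zero-length $V(B)$-reroutings, extracting a pure sublinkage via Lemma~\ref{lem:purelinkages}, and using the $3$-connectivity of $B$ to align endpoints onto the degree-$2$ nails of $R_1^{W_1}$), are built on a starting point you don't actually have. The correct route has to use the 3-block hypothesis more fundamentally, not only for the final rerouting. The essential observation you should exploit earlier is that, since every $V(B)$-path in $G-A$ has $\gamma$-length $0$ and $N^W \subseteq V(B)$, any $A$-$N^W$-path of $\gamma$-length $\ell$ decomposes at its first $V(B)$-vertex into an $A$-$V(B)$-prefix of $\gamma$-length exactly $\ell$ followed by a $\gamma$-zero suffix; the hitting-set hypothesis therefore controls $A$-$V(B)$-prefixes, and the argument must extract a disjoint family of such prefixes (via unconstrained Menger on $A$--$V(B)$ linkages, combined with structural bookkeeping of labels inside the 3-block and the wall) rather than invoking a nonexistent constrained Menger. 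Also note a smaller inconsistency: you attribute the factor $T = 3(36t)^2$ to Lemma~\ref{lem:purelinkages} applied to $\binom{12T}{2}$ pairs, but that lemma requires roughly $t^3$ pairs to extract $t$ pure ones, whereas $T$ is only quadratic in $t$; the $36$ and the square are more consistent with an iterated Menger/frame argument inside the wall than with the pure-linkage extraction step alone.
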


\begin{lemma}\label{lem:H1H2linkages}
    There exists a function $\beta_{\ref{lem:H1H2linkages}}:\mbn^3\to\mbn$ satisfying the following.
    Let $\Gamma$ be a finite abelian group and let $\Lambda\subseteq\Gamma$.
    Let $w,t$ be positive integers such that $w\geq 12t$, and define $T=3(36t)^2$.
    If $(G,\g)$ is a $\G$-labelled graph with $A\subseteq V(G)$ such that
    \begin{enumerate}[label=(\Roman*)]
        \item there does not exist $Y\subseteq V(G)$ with $|Y|<12T|\G|$ intersecting every $\Lambda$-allowable $A$-path, \label{item:h1h2i}
        \item $(G-A,\g)$ contains a strongly balanced $\beta_{\ref{lem:H1H2linkages}}(w,t,|\G|)$-wall $W'$ inducing a tangle $\mct=\mct_{W'}$ in $G-A$ such that the $\mct$-large 3-block $(B,\g_B)$ of $(G-A,\g)$ is a $\G'$-labelled graph for some subgroup $\G'\le\G$, and \label{item:h1h2ii}
        \item for all $(C,D)\in\mct$, $(G[A\cup C],\g)$ does not contain a $\Lambda$-allowable $A$-path, \label{item:h1h2iii}
    \end{enumerate}
    then there is a compact $w$-subwall $W''$ of $W'$, $A$-$W''$-linkages $\mcq_1,\mcq_2$ with $|\mcq_1|=|\mcq_2|=t$ that nicely link to $W''$, and cosets $H_1,H_2$ of $\G'$ such that every path in $\mcq_i$ has $\g$-length in $H_i$ for $i\in[2]$ and $(H_1+H_2)\cap \Lambda\neq\emptyset$. 
\end{lemma}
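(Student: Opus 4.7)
The plan is to identify two elements $\ell_1, \ell_2 \in \G$ whose cosets $H_i := \ell_i + \G'$ satisfy $(H_1+H_2) \cap \Lambda \neq \emptyset$ and for which Lemma \ref{lem:AWpathslengthL} can be applied with $\ell=\ell_i$, and then to extract the linkages $\mcq_1, \mcq_2$ on a common compact subwall $W''$ by two iterations of that lemma. For each $\ell \in \G$, let $X_\ell$ be a minimum vertex set intersecting all $A$-$N^{W'}$-paths of $\g$-length $\ell$, let $S := \{\ell \in \G : |X_\ell| < 12T\}$, and set $Y := \bigcup_{\ell \in S} X_\ell$, so that $|Y| < 12T|\G|$. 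Hypothesis \ref{item:h1h2i} then yields a $\Lambda$-allowable $A$-path $P$ in $G-Y$ with $\g(P) = \lambda \in \Lambda$.

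The crux is to decompose $P$ into two $A$-$N^{W'}$-subpaths. Using the tangle hypothesis \ref{item:h1h2iii} together with the high order of $W'$, I would first argue that $V(P) \cap N^{W'} \neq \emptyset$: otherwise the internal vertices of $P$ lie in a single component of $(G-A)-N^{W'}$, and a suitable tangle separation places this component inside a $\mct$-small side, contradicting \ref{item:h1h2iii}. Letting $v_1, v_k$ be the first and last $N^{W'}$-vertices traversed by $P$ and writing $P=P_1 \cdot P' \cdot P_2$ accordingly, the subpaths $P_1, P_2$ are $A$-$N^{W'}$-paths of $\g$-lengths $\ell_i := \g(P_i)$, and the middle part $P'$ is an $N^{W'}$-$N^{W'}$-path in $G-A$. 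Decomposing $P'$ into $V(B)$-bridge segments and using that $(B,\g_B)$ is $\G'$-balanced together with the fact that strong balancedness of $W'$ places all of $N^{W'}$ at a common potential in $\G/\G'$, I obtain $\g(P') \in \G'$. Hence $\ell_1 + \ell_2 \equiv \lambda \pmod{\G'}$, and since $P$ avoids $Y$ (which would hit $P_i$ via $X_{\ell_i}$ were $\ell_i \in S$), we conclude $\ell_1, \ell_2 \notin S$. Setting $H_i := \ell_i + \G'$ gives $\lambda \in H_1+H_2$.

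Armed with $\ell_1, \ell_2 \in \G \setminus S$, I first apply Lemma \ref{lem:AWpathslengthL} in $W'$ with $\ell=\ell_1$ to obtain a compact $w_1$-subwall $W_1$ of $W'$ together with $t$ nicely-linking $A$-$W_1$-paths of $\g$-length $\ell_1$, taking $w_1$ large enough to accommodate a second iteration. Since $N^{W_1}\subseteq N^{W'}$, the set $X_{\ell_2}$ still hits all $A$-$N^{W_1}$-paths of $\g$-length $\ell_2$, so the hypothesis persists on $W_1$. A second application with $\ell=\ell_2$ then produces a compact $w$-subwall $W_2$ of $W_1$ and $t$ nicely-linking $A$-$W_2$-paths of $\g$-length $\ell_2$. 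Finally, I reroute the first linkage through unused rows and columns of $W_1\setminus W_2$ (contributing $\g$-length $0$ by strong balancedness of $W_1$) so that it too nicely links to $W_2$, and set $W'':=W_2$. The main obstacles are the subclaim that $P$ visits $N^{W'}$, which requires turning the tangle separation sketch into a rigorous argument using only \ref{item:h1h2ii}--\ref{item:h1h2iii}, and calibrating $\beta_{\ref{lem:H1H2linkages}}(w,t,|\G|)$ together with the rerouting step so that both linkages nicely link to the same compact subwall $W''$ of $W'$; the required wall-size bound will be essentially an iterated composition of the wall-size demand in Lemma \ref{lem:AWpathslengthL}.
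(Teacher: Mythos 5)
Your outline correctly captures the first half of the argument: find a $\Lambda$-allowable $A$-path $P$ surviving the union of the small hitting sets, split it at its first and last contacts with the wall into two $A$-wall subpaths, use $\G'$-balancedness of the large 3-block to show the middle part has length in $\G'$, and read off the cosets $H_1,H_2$. This is essentially the paper's ``Claim'' establishing the existence of a compatible pair of cosets for which no small hitting set exists, and it is essentially correct (the paper splits at $V(B)$ rather than at $N^{W'}$, but that is a cosmetic difference).

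The gap is in the persistence step: you assert that, after applying Lemma~\ref{lem:AWpathslengthL} with $\ell=\ell_1$ to obtain $W_1$, ``the hypothesis persists on $W_1$'' for $\ell_2$ because $N^{W_1}\subseteq N^{W'}$. This does not follow, and in fact the implication runs the wrong way. What you need is: there is \emph{no} set of fewer than $12T$ vertices hitting every $A$-$N^{W_1}$-path of $\g$-length $\ell_2$ (or in the coset $\ell_2+\G'$). Knowing that the \emph{large} set $X_{\ell_2}$ (of size $\ge 12T$, since $\ell_2\notin S$) hits such paths is irrelevant. And a small hitting set for $A$-$N^{W_1}$-paths of length in $\ell_2+\G'$ need not hit all $A$-$N^{W'}$-paths of length in that coset, since the latter can terminate at nails of $W'$ outside $N^{W_1}$; so the ``unhittable'' property is \emph{not} automatically inherited by subwalls. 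Indeed, after shrinking to $W_1$ the set of unhittable cosets can change arbitrarily, and the coset $\ell_2+\G'$ may have become hittable while some other coset became unhittable.

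The paper handles precisely this issue with an iterative, pigeonhole-style construction that your proposal does not have an analogue of. At each of up to $|\G/\G'|$ rounds it recomputes the set $\mch_i'$ of cosets that are unhittable \emph{for the current subwall $W_i$}, records one such coset $H_{i+1}$ together with a linkage on a further-shrunk subwall $W_{i+1}$ (preferring, when possible, a coset whose compatible partner has already been recorded), and argues that after $|\G/\G'|$ rounds some recorded pair $(H_i,H_j)$ must satisfy $(H_i+H_j)\cap\Lambda\neq\emptyset$ — in the worst case because the final round has only one coset left to pick, which is then forced to pair with itself. The recorded linkages $\mcq_i,\mcq_j$ live on intermediate subwalls and are subsequently extended by Menger's theorem to the final common subwall $W''$, exactly as you sketch in your rerouting step. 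So your two-application shortcut would need a genuine persistence lemma (which is false as stated), whereas the paper sidesteps persistence entirely via the bookkeeping-plus-pigeonhole argument; this is also why the paper's $\beta_{\ref{lem:H1H2linkages}}$ is an $|\G|$-fold iterate of the wall-size demand rather than the two-fold iterate your sketch would produce.
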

\begin{proof}

Define
\begin{align*}
w_0 &= w, \\
w_i &= (2w_{i-1}+1)(2T+1)|\G|+2t \text{ for } i\geq 1, \text{ and} \\
\beta_{\ref{lem:H1H2linkages}}(w,t,|\G|) &= w_{|\G|}+2.
\end{align*}
Note that $\beta_{\ref{lem:H1H2linkages}}(w,t,|\G|) > 12T|\G|$.

Let $(G,\g)$ be a $\G$-labelled graph with $A\subseteq V(G)$ satisfying \ref{item:h1h2i}, and let $W'$ be a $\beta_{\ref{lem:H1H2linkages}}(w,t,|\G|)$-wall in $(G-A,\g)$ satisfying \ref{item:h1h2ii} and \ref{item:h1h2iii}. 
Note that, since $(B,\g_B)$ is a $\G'$-labelled graph by \ref{item:h1h2ii}, we have that every $V(B)$-path in $(G-A,\g)$ has $\g$-length in $\G'$.
Let $W$ be a 1-contained $w_{|\G|}$-subwall of $W'$.

    \begin{claim}\label{claim:H1+H2}
        Let $W^*$ be a compact $w^*$-subwall of $W$ such that $w^*\geq 12T|\G|$. Let $\mch^*\subseteq \G/\G'$ denote the set of cosets $H$ of $\G'$ such that there does not exist $X\subseteq V(G)$ with $|X|<12T$ intersecting every $A$-$N^{W^*}$-path whose $\g$-length is in $H$.
        Then there exist $H_1,H_2\in\mch^*$ such that $(H_1+H_2)\cap \Lambda \neq\emptyset$.
    \end{claim}
    \begin{subproof}
        For each $H\in(\G/\G')\setminus \mch^*$, let $X_H\subseteq V(G)$ be a vertex set with $|X_H|<12T$ intersecting every $A$-$N^W$-path whose $\g$-length is in $H$, and let $X=\bigcup_{H\in(\G/\G')\setminus \mch^*}X_H$.
        Then in $(G-X,\g)$, every $A$-$N^W$-path has $\g$-length in $\cup\mch^*$.
        Since $|X|<12T|\G|$, $X$ does not intersect all $\Lambda$-allowable $A$-paths by \ref{item:h1h2i}, so there is an $A$-path $P$ in $(G-X,\g)$ with $\g(P)\in \Lambda$.
        Note that $P$ intersects $V(B)$, since otherwise $P$ would be contained in the small side of a separation of order at most $|X|+2$, violating \ref{item:h1h2iii}.
        Let $S_1,S_2$ denote the two $A$-$V(B)$-subpaths of $P$. 
        Then the edges in $E(P)-E(S_1\cup S_2)$ form a $V(B)$-path in $(G-A-X,\g)$, and its $\g$-length is in $\G'$  by \ref{item:h1h2ii}.
        Hence, we have $\g(P)=\g(S_1)+\g(S_2)+\g(P-E(S_1\cup S_2)) \in \g(S_1)+\g(S_2)+\G'$.

        Since $|X|<12T|\G|$ and $W^*$ has order $w^*\geq 12T|\G|$, for each $j\in[2]$, there exists a path in $(G-A-X,\g)$ that extends $S_j$ to $N^{W^*}$, and this path has $\g$-length in the coset $H_j:=\g(S_j)+\G'$ because every $V(B)$-path in $(G-A,\g)$ has $\g$-length in $\G'$ by \ref{item:h1h2ii}. 
        Hence $H_1,H_2\in \mch^*$ and $\g(P)\in (\g(S_1)+\g(S_2)+\G')\cap \Lambda = (H_1+H_2)\cap\Lambda$, as desired.
    \end{subproof}

    \begin{claim} \label{claim:AWpathslengthH}
        Let $W^*$ be a compact $w^*$-subwall of $W$ such that $w^*\geq w_i$ for some integer $i\geq 1$.
        Let $\mch^*\subseteq \G/\G'$ denote the set of cosets $H$ of $\G'$ such that there does not exist $X\subseteq V(G)$ with $|X|<12T$ intersecting every $A$-$N^{W^*}$-path whose $\g$-length is in $H$.
        Then for all $H\in\mch^*$, there exists a compact $w_{i-1}$-subwall $W^*_1$ of $W^*$ such that there are $t$ disjoint $A$-$W^*_1$-paths whose $\g$-lengths are in $H$ that nicely link to $W^*_1$.
    \end{claim}
    \begin{subproof}
        Since $w^*\geq w_i \geq (2w_{i-1}+1)(2T+1)+2t$, there is a $t$-contained compact $(2w_{i-1}+1)(2T+1)$-subwall $W^*_0$ of $W^*$. 
        Let $H\in \mch^*$. 
        Note that the $\mct$-large 3-block $(B,\g_B)$ of $(G-A,\g)$ is a $\G'$-labelled graph by \ref{item:h1h2ii}, and $W^*_0$ is 1-contained in $W'$ (because it is $t$-contained in $W^*$ which is a subwall of $W'$).
        Hence, we have $N^{W^*_0}\subseteq \branch(W')\subseteq V(B)$ by the definitions of $\mct=\mct_{W'}$ and $\mct$-large 3-blocks. Moreover, there does not exist $X\subseteq V(G)$ with $|X|<12T$ intersecting all $A$-$N^{W^*_0}$-paths whose $\g$-length is in $H$ by assumption.
        Hence, by Lemma \ref{lem:AWpathslengthL} applied to $\G/\G'$, $W^*_0$ contains a compact $w_{i-1}$-subwall $W^*_1$ such that there are $t$ disjoint $A$-$W^*_1$-paths whose $\g$-lengths are in $H$ that nicely link to $W^*_1$.
    \end{subproof}
    Let $W_0=W'$ and $\mch_0=\emptyset$. For $i=0,1,2,\dots,|\G/\G'|-1$, in this order, apply the following procedure:
    \begin{enumerate}
        \item Let $\mch_i'\subseteq \G/\G'$ denote the set of cosets $H$ of $\G'$ such that there does not exist $X\subseteq V(G)$ with $|X|<12T$ intersecting every $A$-$N^{W_i}$-path whose $\g$-length is in $H$. 
        \item 
        Define $\mch_{i+1}=\mch_i\cup\{H_{i+1}\}$, where $H_{i+1}$ is defined as follows. Note that by Claim \ref{claim:H1+H2}, there exist $H,H'\in \mch_i'$ such that $(H+H')\cap \Lambda\neq\emptyset$.
        \begin{enumerate}
            \item If possible, choose such a pair $H,H'$ so that $H\in\mch_i$, and define $H_{i+1}=H'$.
            \item Otherwise, choose such a pair $H,H'\in\mch_i'\setminus\mch_i$ arbitrarily, and define $H_{i+1}=H$.
        \end{enumerate}
        \item Apply Claim \ref{claim:AWpathslengthH} to $W_i$ and $H_{i+1}$ to obtain a compact $w_{|\G|-i-1}$-subwall $W_{i+1}$ of $W_i$ such that there is a linkage $\mcq_{i+1}$ of $t$ disjoint $A$-$W_{i+1}$-paths whose $\g$-lengths are in $H_{i+1}$ that nicely link to $W_{i+1}$.
    \end{enumerate}
    Let $W''=W_{|\G/\G'|}$ and $\mch''=\mch_{|\G/\G'|}=\{H_1,\dots,H_{|\G/\G'|}\}$. Note that $W''$ is a compact subwall of $W'$ of order $w_{|\G|-|\G/\G'|} \geq  w_0 = w$, and that $W''$ is strongly balanced because it is a subwall of $W'$.
    \begin{claim}
        There exist $i,j\in[|\G/\G'|]$ such that $(H_i+H_j)\cap\Lambda\neq\emptyset$.
    \end{claim}
    \begin{subproof}
        If (2)(a) was applied in any iteration $i=0,1,\dots,|\G/\G'|-1$, then the pair $H,H'$ in that step both belong to $\mch_{i+1}$, and the claim immediately follows. So we may assume that (2)(b) was applied in every iteration.
        This implies that $|\mch''|=|\G/\G'|$; that is, $\mch''$ contains every coset of $\G'$, and in particular $\mch_{|\G/\G'|-1} = (\G/\G')\setminus\{H_{|\G/\G'|}\}$.
        Moreover, $H_{|\G/\G'|}$ was defined to be $H$, where we had $H,H'\in\mch_{|\G/\G'|-1}'\setminus \mch_{|\G/\G'|-1}$ such that $(H+H')\cap\Lambda\neq\emptyset$. This implies that $H=H'$ and the claim follows with $i=j=|\G/\G'|$.
    \end{subproof}
    Let $i,j$ be as in the claim, and assume without loss of generality that $i\leq j$.
    We have that $\mcq_i$ is an $A$-$W_i$-linkage that nicely links to $W_i$ such that every path in $\mcq_i$ has $\g$-length in $H_i$, $\mcq_j$ is an $A$-$W_j$-linkage that nicely links to $W_j$ such that every path in $\mcq_j$ has $\g$-length in $H_j$, and $(H_i+H_j)\cap\Lambda\neq\emptyset$.

    If $W_i=W''$, define $\mcq_i'=\mcq_i$.
    If $W_i\neq W''$, then $W''$ is $t$-contained in $W_i$, so by Menger's theorem we can extend $\mcq_i$ through $W_i$ to obtain an $A$-$W''$-linkage $\mcq_i'$ that nicely links to $W''$. Since $W''$ is strongly balanced, every path in $\mcq_i'$ still has $\g$-length in $H_i$.
    Define $\mcq_j'$ similarly.

    We now have that $\mcq_i',\mcq_j'$ are $A$-$W''$-linkages with $|\mcq_i'|=|\mcq_j'|=t$ that nicely link to $W''$ such that every path in $\mcq_i'$ (resp. $\mcq_j'$) has $\g$-length in $H_i$ (resp. $H_j$), and $(H_i+H_j)\cap\Lambda\neq\emptyset$, as desired.
\end{proof}

\begin{lemma}\label{lem:PQdisjointwithsame12attachments}
    Let $G$ be graph with $A\subseteq V(G)$, let $W$ be a wall in $G-A$, let $B$ denote the $\mct_W$-large 3-block of $G-A$, and let $\mcp$ be a linkage in $G-A$ such that every path in $\mcp$ has at least one endpoint in $\partial N^W$ and is internally disjoint from $W$.
    Let $\mcq$ be an $A$-$W$-linkage that nicely links to $W$.
    Then there exist disjoint linkages $\mcp',\mcq'$ such that 
    \begin{enumerate}[label=(\roman*)]
        \item  $\mcp'\subseteq \mcp$ and $|\mcp'|\geq |\mcp|-2|\mcq|$, \label{item:pqi}
        \item $\mcq'$ is an $A$-$W$-linkage with $|\mcq'|=|\mcq|$, and \label{item:pqii}
        \item  for each $Q'\in\mcq'$, there exists $Q\in\mcq$ with the same $A$-$V(B)$-subpath as $Q'$, and the endpoint of $Q'$ in $W$ is an endpoint of a path in $\mcp\cup\mcq$.\label{item:pqiii}
    \end{enumerate} 
\end{lemma}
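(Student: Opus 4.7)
The plan is to set up as follows. For each $Q\in\mcq$, I would let $b_Q$ be the first vertex of $Q$ in $V(B)$ when traversed from its endpoint $a_Q\in A$, and split $Q=Q_1\cdot Q_2$ with $Q_1$ the $a_Q$-$b_Q$-subpath (which is the $A$-$V(B)$-subpath of $Q$). Write $B_\mcq:=\{b_Q:Q\in\mcq\}$ and let $T$ denote the set of endpoints in $\partial N^W$ of paths in $\mcp\cup\mcq$. The goal is to build $\mcq'=\{Q_1\cdot R_Q:Q\in\mcq\}$ from carefully chosen, pairwise disjoint $b_Q$-$T$-paths $R_Q$ that are internally disjoint from all the $Q_1$'s, and then take $\mcp'$ to be the set of paths in $\mcp$ disjoint from $\mcq'$.

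The core tool is a bridge bound. Since $V(B)$ is a maximal $2$-inseparable subset of $G-A$, every $V(B)$-bridge of $G-A$ has at most two attachments in $V(B)$. Because $\mcp$ is a linkage whose paths can enter a bridge only through its attachments, at most two paths of $\mcp$ can intersect any given bridge. For each $Q$, the internal vertices of $Q_1$ lie in a single $V(B)$-bridge whose attachments include $b_Q$, so the set $\mcp_{\mathrm{bad}}$ of paths in $\mcp$ meeting $\bigcup_{Q\in\mcq}(V(Q_1)\setminus\{b_Q\})$ satisfies $|\mcp_{\mathrm{bad}}|\le 2|\mcq|$.

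To construct the $R_Q$'s, I would process the paths of $\mcq$ in an arbitrary order while maintaining a pool $\mcp^\circ\subseteq\mcp$ of available paths, initially $\mcp^\circ=\mcp$. For the current $Q$, if $Q_2$ is disjoint from every path in $\mcp^\circ$, I set $R_Q:=Q_2$; otherwise I let $x$ be the first vertex of $Q_2$ (starting at $b_Q$) lying on some $P\in\mcp^\circ$, fix an endpoint $t\in\partial N^W$ of $P$, set $R_Q:=Q_2[b_Q,x]\cdot P[x,t]$, and remove $P$ from $\mcp^\circ$. Setting $\mcp':=\mcp^\circ_{\mathrm{final}}\setminus\mcp_{\mathrm{bad}}$ yields a sublinkage of $\mcp$ disjoint from $\mcq'$: the first-intersection rule combined with the fact that $\mcp$ is a linkage ensures every $R_Q$ avoids all paths remaining in $\mcp^\circ$, and removing $\mcp_{\mathrm{bad}}$ handles intersections with the $Q_1$'s. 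Conditions (ii) and (iii) then follow directly from the construction.

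The hard part will be achieving the sharp bound $|\mcp'|\ge|\mcp|-2|\mcq|$ in (i); the naive accounting gives $|\mcp_{\mathrm{bad}}|+|\mcq|\le 3|\mcq|$, since the rerouting consumes at most $|\mcq|$ additional paths of $\mcp$ beyond $\mcp_{\mathrm{bad}}$. To tighten this, I would coordinate the rerouting so that rerouting paths lie in $\mcp_{\mathrm{bad}}$ whenever possible: if the $Q$-bridge $H_Q$ contains a path $P\in\mcp$ (so automatically $P\in\mcp_{\mathrm{bad}}$), I can directly take $R_Q:=P[b_Q,t]$ for an endpoint $t\in\partial N^W$ of $P$, incurring no new removal. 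The residual cases, in which $Q_2$ meets $\mcp$ only through paths outside $\mcp_{\mathrm{bad}}$, are expected to be dealt with either by a local swap argument leveraging the $3$-connectivity of the block $B$, or by processing the paths in $\mcq$ in an order that exhausts $\mcp_{\mathrm{bad}}$ before drawing on any additional paths.
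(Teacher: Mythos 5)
Your approach is genuinely different from the paper's and, as written, has two gaps.

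\textbf{The greedy construction does not give a linkage.} When you process $Q$ you only require $Q_2[b_Q,x]$ to avoid the \emph{current} pool $\mcp^\circ$. A path $P'$ that was consumed for an earlier $Q'$ is no longer in $\mcp^\circ$, so $Q_2[b_Q,x]$ may pass through $P'[x',t']$, which is a segment of $R_{Q'}$. Thus $R_Q$ and $R_{Q'}$ need not be disjoint, and $\mcq'$ fails to be a linkage. Symmetrically, a later $Q'$'s prefix $Q'_2[b_{Q'},x']$ may pass through the already-consumed $P[x,t]\subseteq R_Q$. Your remark that ``every $R_Q$ avoids all paths remaining in $\mcp^\circ$'' only controls intersection with the final surviving $\mcp$-paths, not mutual intersections among the $R_Q$'s.

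\textbf{The bound $2|\mcq|$ is not reached, and the proposed fix fights the first issue.} You correctly observe that the naive count is $3|\mcq|$. Your suggested remedy, preferring to reroute along paths in $\mcp_{\mathrm{bad}}$, is at odds with the disjointness you also need: if $P\in\mcp_{\mathrm{bad}}$ meets $Q'_1-b_{Q'}$ for some $Q'\neq Q$, then $R_Q=Q_2[b_Q,x]\cdot P[x,t]$ may pass through $Q'_1$, breaking $\mcq'$. The ``local swap'' and ``order the $\mcq$-processing'' ideas are not worked out and do not obviously resolve this.

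For comparison, the paper sidesteps both problems with a single extremal choice: take any $\mcq'$ satisfying (ii) and (iii) that minimizes the number of edges outside $\bigcup\mcp$ (starting from $\mcq$ itself, which trivially qualifies). Disjointness of $\mcq'$ is then built in. If more than $2|\mcq'|$ paths of $\mcp$ meet $\bigcup\mcq'$, then marking on each such $P_i$ the intersection point $q_i$ nearest its wall-endpoint $w_i$ and pigeonholing over $\mcq'$ yields one $Q\in\mcq'$ carrying three marks $q_i,q_j,q_k$ in order from the $A$-end. The three paths $q_jQq_iP_iw_i$, $q_jP_jw_j$, $q_jQq_kP_kw_k$ are internally disjoint, so $q_j\in V(B)$; hence rerouting $Q$ to $Q''=vQq_k\cdot P_k[q_k,w_k]$ preserves the $A$-$V(B)$-subpath and the endpoint condition (iii), while the truncation at $q_k$ and the substitution of $\mcp$-edges strictly decrease the count of edges outside $\bigcup\mcp$, a contradiction. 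This gives the sharp bound $|\mcp'|\geq|\mcp|-2|\mcq|$ without any bridge-counting or pool-management, and the $V(B)$-membership of the middle mark $q_j$ plays the role that your bridge observation was aiming at.
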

\begin{proof}
    Note that $\mcq$ satisfies \ref{item:pqii} and \ref{item:pqiii}. 
    Let $\mcq'$ be a linkage satisfying \ref{item:pqii} and \ref{item:pqiii} such that the number of edges in $\mcq'$ not in $\cup\mcp$ is minimized.
    \begin{claim}
        There are at most $2|\mcq'|$ paths in $\mcp$ that intersect $\cup\mcq'$.
    \end{claim}
    \begin{subproof}
        Let $P_1,\dots,P_m\in\mcp$ denote the paths in $\mcp$ that intersect $\cup\mcq'$. For each $i\in[m]$, let $w_i$ be an endpoint of $P_i$ in $\partial N^W$ and let $q_i$ be the vertex in $P_i\cap \cup\mcq'$ closest to $w_i$ in $P_i$.
        If $m>2|\mcq'|$, then there exists $Q\in\mcq'$ and three distinct indices $i,j,k\in[m]$ such that $q_i,q_j,q_k\in V(Q)$.
        Let $v$ denote the endpoint of $Q$ in $A$, and let us assume without loss of generality that the vertices $v, q_i,q_j,q_k$ occur in this order on the path $Q$.        
        Observe that $q_j\in V(B)$ because there are three paths (namely $q_jQq_iP_iw_i$, $q_jP_jw_j$, and $q_jQq_kP_kw_k$), mutually disjoint except at $q_j$, from $q_j$ to $N^W$ (and hence to $V(B)$).
        Now let $Q''=vQq_jP_jw_j$ and $\mcq'':=(\mcq'\setminus\{Q\})\cup\{Q''\}$.
        Then $Q''$ is an $A$-$W$-path disjoint from every path in $\mcq'\setminus\{Q\}$ with the same $A$-$V(B)$-subpath as $Q$, and the endpoint of $Q''$ in $W$ is an endpoint of the path $P_j\in \mcp\cup\mcq$, so $\mcq''$ satisfies \ref{item:pqii} and \ref{item:pqiii}. On the other hand, $Q''$ has fewer edges not in $\cup\mcp$ than $Q$,  contradicting  our choice of $\mcq'$.
    \end{subproof}
    Hence there exists $\mcp'\subseteq\mcp$ with $|\mcp'|\geq|\mcp|-2|\mcq|$ such that every path in $\mcp'$ is disjoint from every path in $\mcq'$. Then $\mcp'$ and $\mcq'$ satisfy \ref{item:pqi}, \ref{item:pqii}, and \ref{item:pqiii}.
\end{proof}

\begin{lemma}
    \label{lem:generatingwallribbon}
    There exists a function $f_{\ref{lem:generatingwallribbon}}:\mbn^2\to\mbn$ satisfying the following.
    Let $\G$ be a finite abelian group and let $k$ be a positive integer.
    Let $(G,\g)$ be a $\G$-labelled graph with $A\subseteq V(G)$.
    Let $W$ be a wall in $G-A$ and let $\mcp_1,\dots,\mcp_m$ be $W$-handlebars in $G-A$ such that $(W,\{\mcp_1,\dots,\mcp_m\})$ is $\G'$-generating for some subgroup $\G'\le\G$ and the $\mct_W$-large 3-block $(B,\g_B)$ of $(G-A,\g)$ is a $\G'$-labelled graph.
    Let $\mcq_1,\mcq_2$ be $A$-$W$-linkages such that the endpoint of every path in $\mcq_1\cup\mcq_2$ in $W$ is on the column boundary of $W$, and suppose there exist cosets $H_1,H_2$ of $\G'$ such that for $j\in[2]$, every path in $\mcq_j$ has $\g$-length in $H_j$.
    If the linkages $\mcp_1,\dots,\mcp_m,\mcq_1,\mcq_2$ each have size at least $f_{\ref{lem:generatingwallribbon}}(k,|\G|)$, then there exist $W$-handlebars $\mcp_1^*,\dots,\mcp_m^*$ and $A$-$W$-handlebars $\mcq_1^*,\mcq_2^*$ such that $(W,\{\mcp_1^*,\dots,\mcp_m^*\})$ is $\G'$-generating and $(W,(\mcp_1^*,\dots,\mcp_m^*),\mcq_1^*,\mcq_2^*,\g)$ is an $(A,k)$-ribboned wall. 
\end{lemma}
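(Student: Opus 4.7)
The plan is to refine the input data in three stages. First, I will apply Lemma~\ref{lem:PQdisjointwithsame12attachments} twice, once to make a sub-linkage of $\mcq_1$ disjoint from $\mcp_1 \cup \cdots \cup \mcp_m$, and once to make a sub-linkage of $\mcq_2$ disjoint from the result; second, I will pigeonhole the modified $\mcq_j$ to restore uniform $\g$-labels and single-column endpoints; third, I will apply Lemma~\ref{lem:handlebarsnonmixing} (extended in the obvious way to mixed families of $W$- and $A$-$W$-handles by pigeonholing on column-segment attachments) to obtain non-mixing handlebars. The $\G'$-generating property of $(W, \{\mcp_1^*, \ldots, \mcp_m^*\})$ will be inherited automatically, since each $\mcp_i^*$ is a sub-handlebar of $\mcp_i$ whose paths retain the $\g$-length $g_i$ and since the strongly balanced wall $W$ is unchanged.

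\textbf{Execution of the steps.} Choose parameters $N_1 > N_2$ to be pinned down below. Take $\mcq_1' \subseteq \mcq_1$ of size $N_1$ and apply Lemma~\ref{lem:PQdisjointwithsame12attachments} with $\mcp = \mcp_1 \cup \cdots \cup \mcp_m$ and $\mcq = \mcq_1'$ to obtain $\mcp^{(1)} \subseteq \mcp_1 \cup \cdots \cup \mcp_m$ of total size at least $\sum_i |\mcp_i| - 2N_1$ and a replacement $A$-$W$-linkage $\mcq_1^{(1)}$ of size $N_1$ disjoint from $\mcp^{(1)}$. By conclusion~(iii) of that lemma, each path $Q' \in \mcq_1^{(1)}$ shares its $A$-$V(B)$-subpath with some $Q \in \mcq_1'$; since $(B, \g_B)$ is a $\G'$-labelled graph, the differing $V(B)$-$W$ portion has $\g$-length in $\G'$, so $\g(Q') \in H_1$ and the $W$-endpoint of $Q'$ is still on the column boundary. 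Next, take $\mcq_2' \subseteq \mcq_2$ of size $N_2$ and apply Lemma~\ref{lem:PQdisjointwithsame12attachments} again with $\mcp = \mcp^{(1)} \cup \mcq_1^{(1)}$ and $\mcq = \mcq_2'$ to obtain $\mcp^{(2)}$ disjoint from a new $A$-$W$-linkage $\mcq_2^{(1)}$ of size $N_2$, whose paths have $\g$-lengths in $H_2$. Setting $\mcp_i^{(2)} := \mcp^{(2)} \cap \mcp_i$ and $\mcq_1^{(2)} := \mcp^{(2)} \cap \mcq_1^{(1)}$, the worst-case bounds $|\mcp_i^{(2)}| \geq |\mcp_i| - 2N_1 - 2N_2$ and $|\mcq_1^{(2)}| \geq N_1 - 2N_2$ hold. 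Then pigeonhole each of $\mcq_1^{(2)}$ and $\mcq_2^{(1)}$ by column ($2$ options) and by exact $\g$-length (at most $|\G|$ options within the coset $H_j$), losing a factor of at most $2|\G|$, to produce $A$-$W$-handlebars with uniform $\g$-length $h_j$ and endpoints on a single column. Finally, apply the non-mixing extraction to the $m+1$ pairwise disjoint families (the $\mcp_i^{(2)}$ together with $\mcq_1^{(2)} \cup \mcq_2^{(1)}$) with parameter $\theta = 2k$, yielding $\mcp_1^*, \ldots, \mcp_m^*$ of size at least $2k \geq k$ and $\mcq_1^*, \mcq_2^*$ of size at least $2k$.

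\textbf{Main obstacle and choice of $F$.} The principal difficulty is that the second application of Lemma~\ref{lem:PQdisjointwithsame12attachments} can delete up to $2N_2$ paths from $\mcp^{(1)} \cup \mcq_1^{(1)}$, and in the worst case these losses fall entirely on $\mcq_1^{(1)}$. To guarantee that $\mcq_1^{(2)}$ retains at least $2|\G| \cdot f_{\ref{lem:handlebarsnonmixing}}(|\G|+1, 2k)$ paths (enough to survive both the $2|\G|$-factor pigeonhole and the non-mixing step), one must take $N_1$ substantially larger than $N_2$. Concretely, setting $T := f_{\ref{lem:handlebarsnonmixing}}(|\G|+1, 2k)$, $N_2 := 2|\G|T$, $N_1 := 3N_2$, and
\[ f_{\ref{lem:generatingwallribbon}}(k, |\G|) := 2N_1 + 2N_2 + T \]
satisfies all the required size estimates. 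With these parameters, conditions \ref{item:Aobstructions-union} and \ref{item:Aobstructions-zerowall} of the $(A, k)$-ribboned wall follow directly: disjointness and non-mixing come from the two lemmas, uniform $\g$-lengths from the pigeonholing, and the strong balance of $W$ is inherited from the hypothesis that $(W, \{\mcp_1, \ldots, \mcp_m\})$ is $\G'$-generating.
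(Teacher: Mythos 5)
Your high-level plan matches the paper's: two applications of Lemma~\ref{lem:PQdisjointwithsame12attachments}, a pigeonhole on exact $\g$-length and column, then a non-mixing extraction. The gap is in the last stage.

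You invoke ``Lemma~\ref{lem:handlebarsnonmixing} (extended in the obvious way to mixed families of $W$- and $A$-$W$-handles by pigeonholing on column-segment attachments),'' but this extension is neither obvious nor, as stated, a pigeonhole. Lemma~\ref{lem:handlebarsnonmixing} takes $t$ disjoint families of $W$-handles (each a \emph{pair} of boundary endpoints) and returns pairwise non-mixing $W$-handlebars; the combinatorics rests on that pair structure. An $A$-$W$-handle has only one wall endpoint, and there is no small-loss way to pigeonhole a family of single endpoints into a column segment disjoint from the intervals of $\mcp_1^*,\dots,\mcp_m^*$, because those intervals are not fixed in advance --- Lemma~\ref{lem:handlebarsnonmixing} \emph{chooses} them, and they may land exactly where your $\mcq$-endpoints sit. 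The paper resolves this by \emph{joining} $\mcq_1^1$ and $\mcq_2^1$ into a set $\mcp_{m+1}$ of abstract $W$-handles (identifying $A$-endpoints), applying Lemma~\ref{lem:purelinkages} to extract a pure subset of size $f_{\ref{lem:handlebarsnonmixing}}(|\G|,2k)$ (this is why $k'$ is cubed in the paper's constant and why your $f_{\ref{lem:generatingwallribbon}}$ is too small), then applying Lemma~\ref{lem:handlebarsnonmixing} to all $m+1$ families of genuine $W$-handles, and finally \emph{splitting} $\mcp_{m+1}^*$ back into $\mcq_1^*,\mcq_2^*$. You do not mention joining, Lemma~\ref{lem:purelinkages}, or splitting.

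A secondary consequence of the same omission: even granting your claimed extension, you would apply the non-mixing extraction to the single family $\mcq_1^{(2)}\cup\mcq_2^{(1)}$ and get back one $A$-$W$-handlebar of size $\geq 2k$. Nothing forces that output to be balanced between $\mcq_1$- and $\mcq_2$-paths, so you cannot conclude $|\mcq_1^*|,|\mcq_2^*|\geq 2k$. The joining trick fixes this for free, since each path of $\mcp_{m+1}^*$ contains exactly one path from each of $\mcq_1^1$ and $\mcq_2^1$. You should replace the claimed extension with the join--purelinkages--handlebarsnonmixing--split sequence and adjust $f_{\ref{lem:generatingwallribbon}}$ to incorporate the cubing from Lemma~\ref{lem:purelinkages}.
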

\begin{proof}
    Define $k'=(f_{\ref{lem:handlebarsnonmixing}}(|\G|,2k)-1)^3+1$ and $f_{\ref{lem:generatingwallribbon}}(k,|\G|) = 17k'|\G|$. Suppose that $\mcp_1,\dots,\mcp_m,\mcq_1,\mcq_2$ each have size at least $f_{\ref{lem:generatingwallribbon}}(k,|\G|)$.
    We delete some paths from each linkage so that $|\mcp_i|=17k'|\G|$ for all $i\in[m]$, $|\mcq_1|=6k'|\G|$, and $|\mcq_2|=2k'|\G|$.
    
    Let $\mcp = \mcp_1\cup\dots\cup\mcp_m$. 
    Applying Lemma \ref{lem:PQdisjointwithsame12attachments} to $W$, $\mcp$, and $\mcq_1$, we obtain disjoint linkages $\mcp',\mcq_1'$ such that
    \begin{enumerate}[label=(1.\roman*)]
        \item  $\mcp'\subseteq \mcp$ and $|\mcp'|\geq |\mcp|-2|\mcq_1|=|\mcp|-12k'|\G|$, \label{item:1pqi}
        \item $\mcq_1'$ is an $A$-$W$-linkage with $|\mcq_1'|=|\mcq_1|$, and \label{item:1pqii}
        \item  for each $Q'\in\mcq_1'$, there exists $Q\in\mcq_1$ with the same $A$-$V(B)$-subpath as $Q'$, and the endpoint of $Q'$ in $W$ is an endpoint of a path in $\mcp\cup\mcq_1$.\label{item:1pqiii}
    \end{enumerate} 
    Since $(B,\g_B)$ is $\G'$-balanced, \ref{item:1pqiii} implies that the $\g$-length of every path in $\mcq_1'$ is in the coset $H_1$. Since $|\mcp_i|=17k'|\G|$ for all $i\in[m]$, by \ref{item:1pqi}, there exist $W$-handlebars $\mcp_1',\dots,\mcp_m'$ such that for all $i\in[m]$, $|\mcp_i'|=5k'|\G|$ and $\mcp_i'\subseteq\mcp_i$ (and hence $\mcp_i'$ is disjoint from $\mcq_1'$).

    We again apply Lemma \ref{lem:PQdisjointwithsame12attachments} to $W$, $\mcp'':=(\mcp_1'\cup\dots\cup\mcp_m'\cup\mcq_1')$, and $\mcq_2$ to obtain disjoint linkages $\mcp''', \mcq_2''$ such that
    \begin{enumerate}[label=(2.\roman*)]
        \item  $\mcp'''\subseteq \mcp''$ and $|\mcp'''|\geq |\mcp''|-2|\mcq_2|=|\mcp''|-4k'|\G|$, \label{item:2pqi}
        \item $\mcq_2''$ is an $A$-$W$-linkage with $|\mcq_2''|=|\mcq_2|$, and \label{item:2pqii}
        \item  for each $Q'\in\mcq_2''$, there exists $Q\in\mcq_2$ with the same $A$-$V(B)$-subpath as $Q'$, and the endpoint of $Q'$ in $W$ is an endpoint of a path in $\mcp''\cup\mcq_2''$.\label{item:2pqiii}
    \end{enumerate}     
    By \ref{item:2pqiii}, the $\g$-length of every path in $\mcq_2''$ is in the coset $H_2$. Since $|\mcq_1'|=6k'|\G|$ and $|\mcp_i'|=5k'|\G|$ for all $i\in[m]$, by \ref{item:2pqi}, there exist $W$-handlebars $\mcp_1'',\dots,\mcp_m'', \mcq_1''$ such that $|\mcq_1''|=2k'|\G|$, $\mcq_1''\subseteq\mcq_1'$, and for all $i\in[m]$, $|\mcp_i''|=k'|\G|$ and $\mcp_i''\subseteq\mcp_i'$.

    Since $|\mcq_1''|=|\mcq_2''|=2k'|\G|$ and since the endpoint of every path in $\mcq_1\cup\mcq_2$ in $W$ is on the column boundary of $W$ by \ref{item:1pqiii} and \ref{item:2pqiii}, there exist subsets $\mcq_1'''\subseteq\mcq_1''$ and $\mcq_2'''\subseteq\mcq_2''$ such that, for $j\in[2]$, we have  $|\mcq_j'''|=k'|\G|$ and the endpoints of the paths in $\mcq_j'''$ in $W$ are either all on the first column of $W$ or all on the last column of $W$.
    Moreover, since every path in $\mcq_j$ has $\g$-length in $H_j$ and $|\mcq_j'''| = k'|\G|$, there exists $\mcq_j^1\subseteq \mcq_j'''$ with $|\mcq_j^1|=k'$ such that the paths in $\mcq_j^1$ have the same $\g$-length, say $h_j\in H_j$.
    Now let $\mcp_{m+1}$ be a set of $W$-handles with $|\mcp_{m+1}|=k'$ obtained by joining $\mcq_1^1$ and $\mcq_2^1$ arbitrarily.
    Since $k' = (f_{\ref{lem:handlebarsnonmixing}}(|\G|,2k)-1)^3+1$,
    by Lemma \ref{lem:purelinkages}, there is a subset $\mcp_{m+1}^1$ of $\mcp_{m+1}$ with $|\mcp_{m+1}^1|=f_{\ref{lem:handlebarsnonmixing}}(|\G|,2k)$ such that $\mcp_{m+1}^1$ is a $W$-handlebar.
    
    For $i\in[m]$, let $\mcp_i^1$ be a subset of $\mcp_i''$ with $|\mcp_i^1|=f_{\ref{lem:handlebarsnonmixing}}(|\G|,2k)$.
    Then $\mcp_1^1,\dots,\mcp_{m+1}^1$ are pairwise disjoint sets of $f_{\ref{lem:handlebarsnonmixing}}(|\G|,2k)$ disjoint $W$-handles.
    Since $m<|\G|$, we have $f_{\ref{lem:handlebarsnonmixing}}(|\G|,2k)\geq f_{\ref{lem:handlebarsnonmixing}}(m+1,2k)$, so by Lemma \ref{lem:handlebarsnonmixing}, there exist $W$-handlebars $\mcp_1^*,\dots,\mcp_{m+1}^*$ that are pairwise non-mixing such that for all $i\in[m+1]$, $\mcp_i^*\subseteq\mcp_i^1$ and $|\mcp_i^*|=2k$.
    By splitting $\mcp_{m+1}^*$, we obtain two $A$-$W$-linkages $\mcq_1^*,\mcq_2^*$ such that for $j\in[2]$, $\mcq_j^*\subseteq \mcq_j^1$ and $|\mcq_j^*|=2k$. 
    
    Since $\mcp_1^*,\dots,\mcp_{m+1}^*$ are pairwise disjoint and non-mixing with respect to $W$, so are $\mcp_1^*,\dots,\mcp_m^*$, $\mcq_1^*$, $\mcq_2^*$, so \ref{item:Aobstructions-union} is satisfied. Since $(W,\{\mcp_1,\dots,\mcp_m\})$ is $\G'$-generating and $\mcp_i^*\subseteq \mcp_i$ for $i\in[m]$, and since every path in $\mcq_j^*$ has $\g$-length $h_j$ for $j\in[2]$, \ref{item:Aobstructions-zerowall} is satisfied as well.
    Therefore, $(W,(\mcp_i^*:i\in[m]),\mcq_1^*,\mcq_2^*,\g)$ is an $(A,k)$-ribboned wall.    
\end{proof}

\begin{lemma}\label{lem:makeirreducible}
    There exists a function $f_{\ref{lem:makeirreducible}}:\mbn^2\to\mbn$ satisfying the following.
    Let $\G$ be a finite abelian group, let $\Lambda\subseteq\G$, and let $k$ be a positive integer.     
    Let $\mcr=(W,(\mcp_i:i\in[m]),\mcq_1,\mcq_2,\g)$ be an $(A,f_{\ref{lem:makeirreducible}}(k,|\G|))$-ribboned $\theta$-wall such that $(W,\{\mcp_1,\dots,\mcp_m\})$ is $\G'$-generating for some subgroup $\G'\le\G$ and $(h_1+h_2+\G') \cap\Lambda\neq\emptyset$, where $h_j$ is the $\g$-length of every path in $\mcq_j$.
    Then $G(\mcr)$ contains a $\Lambda$-irreducible $(A,k)$-ribboned $(\theta-2|\G|)$-wall. 
\end{lemma}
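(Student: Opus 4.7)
My plan is to prove this by strong induction on $m$, the number of handlebars in $\mcr$. The base case $m = 0$ is immediate: $\G' = \{0\}$ forces $h_1 + h_2 \in \Lambda$, and $\mcr$ itself satisfies $\Lambda$-irreducibility trivially. For the inductive step with $m \geq 1$, I would branch on whether \ref{item:Aobstructions-minimality} holds for $\mcr$.

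If \ref{item:Aobstructions-minimality} fails, i.e., there exist $i_0 \in [m]$ and $(b_1, b_2) \in \{(2,0),(1,1),(0,2)\}$ with $(\langle g_j : j \neq i_0\rangle + b_1 h_1 + b_2 h_2) \cap \Lambda \neq \emptyset$, then I would delete $\mcp_{i_0}$ and, if $b_1 = 2$, set $\mcq_2^{\mathrm{new}} = \mcq_1$, or if $b_2 = 2$, set $\mcq_1^{\mathrm{new}} = \mcq_2$, leaving the $\mcq$'s unchanged otherwise. The new ribboned wall $\mcr'$ has $m-1$ handlebars, is $\G''$-generating for $\G'' = \langle g_j : j \neq i_0\rangle$ (the $\G'$-generating condition on $\mcr$ gives $g_j \notin \langle g_\ell : \ell \notin \{i_0, j\}\rangle$ for each $j \neq i_0$), and satisfies $(h_1^{\mathrm{new}} + h_2^{\mathrm{new}} + \G'') \cap \Lambda \neq \emptyset$ by construction. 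The inductive hypothesis applied to $\mcr'$ then yields the desired wall inside $G(\mcr') \subseteq G(\mcr)$.

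Otherwise \ref{item:Aobstructions-minimality} holds for $\mcr$. I would then choose $(c_1,\dots,c_m) \in \mathbb{Z}^m$ with $1 \leq c_i < \mathrm{ord}(g_i)$ and $\sum c_i g_i + h_1 + h_2 \in \Lambda$ (existence from the hypothesis; all $c_i \geq 1$ since $c_i = 0$ would violate \ref{item:Aobstructions-minimality}), subject to the constraint that $c_i$ is even whenever $\mathrm{ord}(g_i)$ is odd (always achievable since $2$ is invertible modulo an odd order), and minimizing the secondary measure $\sum_i \mathrm{ord}(c_i g_i)$. Applying Lemma~\ref{lem:combining-handles} with $d_i = c_i$ produces handlebars $\mcp_i^*$ of $\g$-length $c_i g_i$ in a $(\theta-2)$-column-slice $W^*$ of $W$, with $\mcp_i^*$ in series if $c_i$ is even and of the same type as $\mcp_i$ if $c_i$ is odd. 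Setting $f_{\ref{lem:makeirreducible}}(k, |\G|) = \Theta(k|\G|)$ guarantees the handlebars retain enough paths after combining. The resulting $(W^*, (\mcp_i^*), \mcq_1, \mcq_2, \g)$ satisfies \ref{item:Aobstructions-union}, \ref{item:Aobstructions-zerowall}, and \ref{item:Aobstructions-allowabletransversals} by construction, and inherits \ref{item:Aobstructions-minimality} from $\mcr$ since $\langle c_j g_j : j \neq i\rangle \subseteq \langle g_j : j \neq i\rangle$.

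The main obstacle is establishing \ref{item:Aobstructions-even}. Suppose it fails at some $i_0$, so $\mcp_{i_0}$ is nested or crossing with $c_{i_0}$ odd, and there is a witness $\sum c'_j (c_j g_j) + b'_1 h_1 + b'_2 h_2 \in \Lambda$ with $c'_{i_0}$ even. Since $c_{i_0}$ is odd, our even-coordinate constraint forces $\mathrm{ord}(g_{i_0})$ to be even, so $c'_{i_0} c_{i_0} g_{i_0} \in \langle 2 g_{i_0}\rangle \subsetneq \langle g_{i_0}\rangle$. The composed tuple $\tilde c_j = (c'_j c_j) \bmod \mathrm{ord}(g_j)$ satisfies $\sum \tilde c_j g_j + b'_1 h_1 + b'_2 h_2 \in \Lambda$. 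If some $\tilde c_i = 0$, then $(\langle g_j : j \neq i\rangle + b'_1 h_1 + b'_2 h_2) \cap \Lambda \neq \emptyset$, violating \ref{item:Aobstructions-minimality} for $\mcr$ and contradicting the case assumption. Otherwise, $\mathrm{ord}(\tilde c_{i_0} g_{i_0}) < \mathrm{ord}(c_{i_0} g_{i_0})$ (a factor-$2$ drop in the $2$-part, since $c'_{i_0}$ is even while $\gcd(c_{i_0}, \mathrm{ord}(g_{i_0}))$ is odd), so $\sum_j \mathrm{ord}(\tilde c_j g_j) < \sum_j \mathrm{ord}(c_j g_j)$, contradicting the minimality of our chosen tuple. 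This completes the induction; since Lemma~\ref{lem:combining-handles} is invoked at most once per recursive level and the recursion depth is at most $|\G|$, the total wall-order loss is at most $2|\G|$ as required.
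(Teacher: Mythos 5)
Your approach is a genuinely different (and more streamlined) strategy than the paper's: you apply Lemma~\ref{lem:combining-handles} once with a carefully chosen tuple $(c_1,\dots,c_m)$ minimizing $\sum_i \operatorname{ord}(c_ig_i)$ subject to a parity constraint, whereas the paper iterates $j=1,\dots,m$, re-applying Lemma~\ref{lem:combining-handles} at each step and greedily trying to put $\mcp_j$ in series. Your single-pass version loses only $2$ columns rather than $2m$ and needs $f_{\ref{lem:makeirreducible}}(k,|\G|)=\Theta(k|\G|)$ rather than $2k|\G|^{|\G|}$, which is cleaner, and your descent argument for \ref{item:Aobstructions-even} via the order-sum measure is elegant. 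However, there is a genuine gap.

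The gap is in your verification of \ref{item:Aobstructions-even}. That axiom quantifies over all $(b_1,b_2)\in\{(2,0),(1,1),(0,2)\}$, so a failure may be witnessed by $(b_1',b_2')\neq(1,1)$: you would then have $\sum_j \tilde c_jg_j + b_1'h_1 + b_2'h_2 \in\Lambda$ with all $\tilde c_j\neq 0$. But your minimization was carried out \emph{only} over tuples satisfying the $(1,1)$-constraint $\sum c_ig_i+h_1+h_2\in\Lambda$, so $\tilde c$ is not a legal competitor and no contradiction with minimality follows. Concretely, if $(2h_1+\G')\cap\Lambda\neq\emptyset$ but $(h_1+h_2+\G')\cap\Lambda$ does not contain the particular element $\sum\tilde c_jg_j+2h_1$, your argument says nothing. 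The missing ingredient is the paper's preliminary ``$\mcq$-identification'' step: before choosing $c$, check whether $(2h_j+\G')\cap\Lambda\neq\emptyset$ for some $j\in[2]$, and if so set $\mcq_1=\mcq_2=\mcq_j$ (so $h_1=h_2$, making all three $(b_1,b_2)$ cases coincide). In the remaining case where $(2h_j+\G')\cap\Lambda=\emptyset$ for both $j$, the $(2,0)$ and $(0,2)$ instances of \ref{item:Aobstructions-even} become vacuous, since $\sum c_i'(c_ig_i)+2h_j$ always lies in $2h_j+\G'$. Either way, only the $(1,1)$ case survives and your descent then closes it. You also need the same preparatory step to argue \ref{item:Aobstructions-minimality} correctly after the identification (it follows from \ref{item:Aobstructions-minimality} for $\mcr$ with $(b_1,b_2)=(2,0)$ or $(0,2)$). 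A second, minor issue: the range restriction $1\le c_i<\operatorname{ord}(g_i)$ can conflict with the requirement that $c_i$ be even when $\operatorname{ord}(g_i)$ is odd — for a fixed value of $c_ig_i$ the residue in $[1,\operatorname{ord}(g_i))$ has a fixed parity. Dropping the upper bound (allowing $c_i<2\operatorname{ord}(g_i)$, say) fixes this at negligible cost to $f_{\ref{lem:makeirreducible}}$.
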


\begin{proof}
    By repeatedly removing indices $i'\in[m]$ for which there exist $b_1,b_2\in\{0,1,2\}$ with $b_1+b_2=2$ such that $((b_1h_1+b_2h_2)+\langle g_i: i\in[m]-\{i'\}\rangle) \cap \Lambda\neq\emptyset$, we may assume without loss of generality that
    \begin{equation}\tag{$*$}\label{eqn:A4}
        \parbox{\dimexpr\linewidth-4em}{for each $i'\in[m]$ and $b_1,b_2\in\{0,1,2\}$ with $b_1+b_2=2$, we have $((b_1h_1+b_2h_2)+\langle g_i: i\in[m]-\{i'\}\rangle) \cap \Lambda=\emptyset$.}
    \end{equation}
    Moreover, if for some $j\in[2]$, there exist integers $d_1,\dots,d_m$ such that $2h_j+\sum_{i\in[m]}d_ig_i\in\Lambda$, then we set both $\mcq_1$ and $\mcq_2$ to be equal to $\mcq_j$.

    Define $f_{\ref{lem:makeirreducible}}(k,|\G|) = 2k|\G|^{|\G|}$.
    Then we have $|\mcq_j|\geq 4k|\G|^{|\G|}$ for each $j\in[2]$.
    Let $\mcp_{m+1}$ be a $W$-handlebar with $|\mcp_{m+1}|\geq 2k|\G|^{|\G|}$ obtained by joining $\mcq_1$ and $\mcq_2$. 
    Since $(W,\{\mcp_1,\dots,\mcp_m\})$ is $\G'$-generating and since $(h_1+h_2+\G')\cap\Lambda\neq\emptyset$, there exist non-negative integers $d_1,\dots,d_m$ with $d_i<|\G|$ such that $h_1+h_2+\sum_{i\in[m]}d_ig_i\in\Lambda$.
    By \eqref{eqn:A4}, the integers $d_1,\dots,d_m$ are all positive.
    
    Let $W^0=W$ and $\mcp_i^0=\mcp_i$ for $i\in[m+1]$. For $i\in[m]$, define $g_i^0=g_i$.
    For $j=1,2,\dots,m$, in this order, apply the following procedure.
    \begin{enumerate}[label=(\arabic*)]
        \item Let $W^j$ be a $(\theta-2j)$-column-slice of $W^{j-1}$ disjoint from the first and last columns of $W^{j-1}$. \label{item:comb1}
        \item Note that there exist positive integers $d_1,\dots,d_m$ with $d_i<|\G|$ such that $h_1+h_2+\sum_{i\in[m]}d_ig_i^{j-1}\in\Lambda$.
        If possible, choose such integers $d_1,\dots,d_m$ so that $d_j$ is even, where $j$ is the current index. Let $d_{m+1}=1$. \label{item:comb2}
        \item Note that $|\mcp_i^{j-1}|\geq 2k|\G|^{|\G|-(j-1)}$ for all $i\in[m+1]$. Apply Lemma \ref{lem:combining-handles} to obtain pairwise disjoint and non-mixing $W^j$-handlebars $\mcp_1^j,\dots,\mcp_{m+1}^j$ in $H$ such that for each $i\in[m+1]$, every path in $\mcp_i^j$ contains exactly $d_i$ paths in $\mcp_i^{j-1}$ and $|\mcp_i^j| =2k|\G|^{|\G|-j}$, and such that if $d_i$ is even, then $\mcp_i^j$ is in series, and if $d_i$ is odd, then $\mcp_i^j$ is of the same type as $\mcp_i^{j-1}$. 
        Define $g_i^j = d_ig_i^{j-1}$, so that every path $\mcp_i^j$ has $\g$-length $g_i^j$. 
        \label{item:comb3}
    \end{enumerate}
    Since $(W,\{\mcp_1,\dots,\mcp_m\})$ is $\G'$-generating, it follows from \ref{item:comb2} and \ref{item:comb3} that 
    \begin{itemize}
        \item for each $j\in[m]$, we have $h_1+h_2+\sum_{i\in[m]}g_i^j\in\Lambda$ and,
        \item for each $i\in [j]$, either $\mcp_i^j$ is in series or there does not exist positive integers $d_1',\dots,d_m'$ with $d_i'$ even such that $h_1+h_2+\sum_{i\in[m]}d_i'g_i^j\in\Lambda$.
    \end{itemize} 

    Define $W^*=W^m$, $\mcp_i^* = \mcp_i^m$ for each $i\in[m]$, and let $\mcq_1^*,\mcq_2^*$ denote the two $A$-$W$-handlebars obtained by splitting $\mcp_{m+1}^m$. 
    Then $|\mcq_j^*|\geq 2k|\G|^{|\G|-m}\geq 2k$ for each $j\in[2]$, and $|\mcp_i^*|\geq 2k|\G|^{|\G|-m}\geq 2k$ for each $i\in[m]$. Moreover, $W^*$ is a $(\theta-2m)$-column-slice of $W$ by \ref{item:comb1}.  
    
    Since $\mcq_1$ and $\mcq_2$ are either disjoint or equal, so are $\mcq_1^*$ and $\mcq_2^*$.
    Moreover, since $\mcp_1^m,\dots,\mcp_{m+1}^m$ are pairwise disjoint and non-mixing, so are $\mcp_1^*,\dots,\mcp_m^*,\mcq_1^*\cup\mcq_2^*$.
    Hence \ref{item:Aobstructions-union} is satisfied.
    Since $(W,\{\mcp_1,\dots,\mcp_m\})$ is $\G'$-generating, it follows from \ref{item:comb3} that \ref{item:Aobstructions-zerowall} is satisfied. Hence, $\mcr^*:=(W^*,(\mcp_i^*:i\in[m]),\mcq_1^*,\mcq_2^*,\g)$ is an $(A,k)$-ribboned $(\theta-2m)$-wall.
    Define $g_i^* = g_i^m$ and $h_j^*=h_j$ for $i\in[m]$ and $j\in[2]$.
    Then every path in $\mcp_i^*$ and $\mcq_j^*$ has length $g_i^*$ and $h_j^*$, respectively, for $i\in[m]$ and $j\in[2]$.

    By \ref{item:comb2} and \ref{item:comb3}, we have $(\sum_{i\in[m]}g_i^*)+h_1^*+h_2^*\in \Lambda$, so \ref{item:Aobstructions-allowabletransversals} is satisfied.
    By \eqref{eqn:A4}, \ref{item:Aobstructions-minimality} is  satisfied.
    It also follows from \ref{item:comb2} and \ref{item:comb3} that \ref{item:Aobstructions-even} is satisfied. Therefore, $\mcr^*$ is $\Lambda$-irreducible.
\end{proof}

\section{Proof of Theorem \ref{thm:mainApathstructure}} \label{sec:proof}
We now complete the proof of Theorem \ref{thm:mainApathstructure}, restated here for convenience.
\mainApathstructure*
\begin{proof}
    
    For each positive integer $k$, define
    \begin{align*}
    t=t(k,|\G|)&=f_{\ref{lem:generatingwallribbon}}(f_{\ref{lem:makeirreducible}}(k,|\G|),|\G|), \\
    T=T(k,|\G|)&=3(36t)^2, \\
    w=w(k,|\G|)&= 12t|\G|, \\
    \theta = \theta(k,|\G|) &= f_{\ref{lem:generatinglinkagessubwall}}(\beta_{\ref{lem:H1H2linkages}}(w,t,|\G|),t,|\G|),
    \end{align*}
    and let $f:\mbn^2\to\mbn$ be a function such that $f(0,|\G|)=0$ and
    \begin{align*}
    f(k,|\G|) \geq 2f(k-1,|\G|)+3f_{\ref{thm:tanglewall}}(\theta)+10 + g_{\ref{lem:generatinglinkagessubwall}}(t,|\G|)+ 12T|\G|
    \end{align*}
    for all $k\geq 1$.

    Suppose that $((G,\g),k)$ is a minimal counterexample to $f_{\G}(k):=f(k,|\G|)$ being an Erd\H{o}s-P\'osa function for the family of $\Lambda$-allowable $A$-paths.

    By Lemma \ref{lem:minctextangle}, $(G-A,\g)$ admits a tangle $\mct$ of order $f_{\ref{thm:tanglewall}}(\theta)$ such that for each $(C,D)\in\mct$, $(G[A\cup C],\g)$ does not contain a $\Lambda$-allowable $A$-path.

    By Lemma \ref{thm:tanglewall}, there is a wall $W$ in $G-A$ of order at least $\theta=f_{\ref{lem:generatinglinkagessubwall}}(\beta_{\ref{lem:H1H2linkages}}(w,t,|\G|),t,|\G|)$ such that $\mct$ dominates $W$.

    By Lemma \ref{lem:generatinglinkagessubwall}, there is a subgroup $\G'\le \G$ and a vertex set $Z\subseteq V(G)-A$ with $|Z|\leq g_{\ref{lem:generatinglinkagessubwall}}(t,|\G|)$ such that $(G-A-Z,\g)$ contains a strongly balanced $\beta_{\ref{lem:H1H2linkages}}(w,t,|\G|)$-subwall $W'$ of $W$ and a family of pairwise disjoint and non-mixing $W'$-handlebars $\mcp_1,\dots,\mcp_m$ each of size $t$ such that $(W',\{\mcp_1,\dots,\mcp_m\})$ is $\G'$-generating and the $\mct_{W'}$-large 3-block $(B,\g_B)$ of $(G-A-Z,\g)$ is $\G'$-balanced. Note that $m<|\G|$.

    Since $(B,\g_B)$ is $\G'$-balanced, by Lemma \ref{lem:shiftequivalent}, there exists a sequence of shifts in $(B,\g_B)$ resulting in a $\G'$-labelled graph $(B,\g_B')$. Suppose that the same sequence of shifts in $(G,\g)$ results in $(G,\g')$. Then $(B,\g_B')$ is a 3-block of $(G-A-Z,\g')$, so every $V(B)$-path in $G-A-Z$ has $\g'$-length in $\G'$.
    Since shifting at vertices in $V(B)\subseteq V(G)-A$ does not change the length of any $A$-path, we may assume without loss of generality that we have already performed these shifts; that is, we may assume that $(B,\g_B)$ is a $\G'$-labelled graph and every $V(B)$-path in $G-A-Z$ has $\g$-length in $\G'$.

    In particular, this implies that $(G-Z,\g)$ and $W'$ satisfy the hypothesis \ref{item:h1h2ii} of Lemma \ref{lem:H1H2linkages}. Also note that \ref{item:h1h2iii} is satisfied since $\mct$ dominates $W'$ and $\mct$ satisfies \ref{item:h1h2iii} by Lemma \ref{lem:minctextangle}. Furthermore, there does not exist $Y\subseteq V(G-Z)$ with $|Y|<12T|\G|$ intersecting every $\Lambda$-allowable $A$-path in $(G-Z,\g)$ because this would imply that $Z\cup Y$ is a vertex set in $(G,\g)$ with $|Z\cup Y|<g_{\ref{lem:generatinglinkagessubwall}}(t,|\G|)+12T|\G|<f(k,|\G|)$ intersecting every $\Lambda$-allowable $A$-path in $(G,\g)$, contradicting the assumption that $((G,\g),k)$ is a minimal counterexample to $f(\cdot,|\G|)$ being an Erd\H{o}s-P\'osa function for the family of $\Lambda$-allowable $A$-paths.
    Hence, \ref{item:h1h2i} is satisfied.

    By Lemma \ref{lem:H1H2linkages}, there is a compact $w$-subwall $W''$ of $W'$, $A$-$W''$-linkages $\mcq_1,\mcq_2$ with $|\mcq_1|=|\mcq_2|=t$ that nicely link to $W''$, and cosets $H_1,H_2$ of $\G'$ such that every path in $\mcq_i$ has $\g$-length in $H_i$ for $i\in[2]$ and $(H_1+H_2)\cap\Lambda\neq\emptyset$.

    Let $W'''$ be a $2t|\G|$-contained $2t|\G|$-subwall of $W''$. Then $W'''$ is $2t|\G|$-contained in $W'$. Let $U'$ denote the union of the set of endpoints of the paths in $\mcp_1\cup\dots\cup\mcp_m$ and the set of endpoints of the paths in $\mcq_1\cup\mcq_2$ in $W''$, and let $V'$ denote the column boundary of $W'''$. Note that $|U'|\leq 2tm+2t\leq 2t|\G|$ and $|V'| \geq 4t|\G|$. Since $W'''$ is $2t|\G|$-contained in $W'$, there does not exist a set of fewer than $|U'|$ vertices separating $U'$ and $V'$ in $W'-(V(W''')-V')$. By Menger's theorem, there exists a set $\mcs$ of $|U'|$ disjoint paths in $W'-(V(W''')-V')$ from $U'$ to $V'$.
    It is easy to see that $\mcs$ can be chosen so that, for every pair $u'\in U'$ and $v'\in V'$ of endpoints of a path in $\mcs$, if $u'$ is in the first or last column of $W'$, then $v'$ is in the first or last column of $W'''$ respectively.
    Let us assume without loss of generality that $\mcs$ is chosen this way.
    
    Let $\mcp_1',\dots,\mcp_m'$ be obtained from $\mcp_1,\dots,\mcp_m$ by extending each $W'$-handle along two paths in $\mcs$ so that it becomes $W'''$-handle. By the planarity of $W'$, the cyclic ordering of the endpoints of the paths in $\mcp_1'\cup\dots\cup\mcp_m'$ around $\partial W'''$ is inherited from the cyclic ordering of the endpoints of the paths in $\mcp_1\cup\dots\cup\mcp_m$ around $\partial W'$. Together with our choice of $\mcs$, this implies that $\mcp_1',\dots,\mcp_m'$ are pairwise disjoint non-mixing $W'''$-handlebars.
    Moreover, since $W'$ is strongly balanced, the paths in $\mcp_i'$ have the same $\g$-length as the paths in $\mcp_i$.
    Hence, $(W''',\{\mcp_1',\dots,\mcp_m'\})$ is $\G'$-generating.
    Also note that $\mct_{W'}$ dominates $W'''$, so $(B,\g_B)$ is also the $\mct_{W'''}$-large 3-block of $(G-A-Z,\g)$.
    Then $|\mcp_1'|=\dots=|\mcp_m'|=|\mcq_1|=|\mcq_2|=t$. Recall that $t=f_{\ref{lem:generatingwallribbon}}(f_{\ref{lem:makeirreducible}}(k,|\G|),|\G|)$.

    By Lemma \ref{lem:generatingwallribbon} applied to $W'''$, $\mcp_1',\dots,\mcp_m'$, and $\mcq_1,\mcq_2$, there exist $W'''$-handlebars $\mcp_1^*,\dots,\mcp_m^*$ and $A$-$W$-handlebars $\mcq_1^*,\mcq_2^*$ such that $(W,\{\mcp_1^*,\dots,\mcp_m^*\})$ is $\G'$-generating and \[\mcr^*:=(W,(\mcp_1^*,\dots,\mcp_m^*),\mcq_1^*,\mcq_2^*,\g)\] is an $(A,f_{\ref{lem:makeirreducible}}(k,|\G|)$)-ribboned wall.

    By Lemma \ref{lem:makeirreducible}, $G(\mcr^*)$ (and hence $(G-A,\g)$) contains a $\Lambda$-irreducible $(A,k)$-ribboned wall.
\end{proof}

\bibliographystyle{abbrv}
\bibliography{refs}

\end{document}